\theoremstyle{plain}
\newtheorem{theo}{Theorem}[section]
\newtheorem{prop}[theo]{Proposition}
\newtheorem{lem}[theo]{Lemma}
\newtheorem{cor}[theo]{Corollary}
\theoremstyle{definition}
\newtheorem{Rk}{Remark}
\newtheorem{ex}{Example}
\DeclareMathOperator{\dens}{dens}
\DeclareMathOperator{\Frob}{Frob}
\DeclareMathOperator{\Gal}{Gal}
\DeclareMathOperator{\Spec}{Spec}
\DeclareMathOperator{\tr}{tr}
\begin{document}

\title[congruence class of the number of rational points of a variety]{On the congruence class modulo prime numbers\\ of the number of rational points of a variety}
\author{Lucile Devin}
\address{Laboratoire de Mathématiques d'Orsay \\ Univ. Paris-Sud \\ CNRS \\ Université Paris-Saclay \\ 91405 Orsay  \\ France}
\email{lucile.devin@math.u-psud.fr}
\keywords{Chebotarev Density Theorem, algebraic varieties over finite fields, large and larger sieve}
\subjclass[2010]{Primary 11R45, 11G25; Secondary 11N36}

\begin{abstract}
Let $X$ be a scheme of finite type over $\mathbf{Z}$.
For $p \in \mathcal{P}$ the set of prime numbers, 
let $N_{X}(p)$ be the number of $\mathbf{F}_{p}$-points of $X/\mathbf{F}_{p}$.
For fixed $n\geq 1$ and $a_{1}, \ldots, a_{n} \in \mathbf{Z}$, we study the set 
$\bigcap_{i=1}^{n}\lbrace p\in\mathcal{P}-\Sigma_{X}, N_{X}(p)\neq a_{i}\ [\bmod\ p]\rbrace$ 
where $\Sigma_{X}$ is the finite set of primes of bad reduction for $X$.
In case $\dim X\leq 3$, we show the set is either empty or has positive lower-density. 
We also address the question of the size of the smallest prime in that set.
Using sieve methods, we obtain for example an upper bound for the size of the least prime of $\lbrace p\in\mathcal{P}, p\nmid N_{X}(p)\rbrace$ on average in particular families of hyperelliptic curves.
\end{abstract}

\maketitle

\section*{Introduction}

Let $X$ be a scheme of finite type over $\mathbf{Z}$. 
For every prime number $p$ one can define $X_{p}=X\times_{\mathbf{Z}}\mathbf{F}_{p}$ the ``reduction modulo $p$" of $X$.
The quantity $N_{X}(p):=\vert X_{p}(\mathbf{F}_{p})\vert$ is 
the number of $\mathbf{F}_{p}$-points of $X$.
For fixed $p$, the Weil Conjecture gives a precise estimate for $N_{X}(p)$.
However the arithmetic properties of $N_{X}(p)$ remain mysterious to a large extent.
The aim of this article is to study properties of $N_{X}(p) [\bmod\ p]$.

The main focus of this work is the set $\lbrace p \in \mathcal{P}, p\nmid N_{X}(p)\rbrace$.
The principal motivation comes from work of Fouvry and Katz (\cite{FouKa}).
In \textit{loc. cit.} the authors relate the possibility to obtain sharp estimates for certain exponential sums over the rational points of $X$ and the size of the set $\lbrace p \in \mathcal{P}, p\nmid N_{X}(p)\rbrace$.
More precisely the authors state (\cite[Th.8.1]{FouKa}) that if $X/\mathbf{C}$ is smooth 
and if the set $\lbrace p \in \mathcal{P}, p\nmid N_{X}(p)\rbrace$ is \emph{infinite},
then a deep geometric invariant (called the $A$-number) associated to $X$ is non-zero.

Let us give a bit more detail on what the $A$-number is and on how Fouvry and Katz use that invariant.
Given an affine scheme $X\subset \mathbb{A}^{N}_{\mathbf{Z}}$ of finite type over $\mathbf{Z}$ such that $X/\mathbf{C}$ is smooth, 
a function $f$ on $X$ (i.e. a morphism $f:X\rightarrow \mathbb{A}_\mathbf{Z}^{1}$),
a finite field $k$
and a non-trivial additive character $\psi$ of $k$,
Fouvry and Katz define $A(X,f,k,\psi)$ as the rank of a certain lisse sheaf defined using the $\ell$-adic Fourier transform (\cite[Part 4]{FouKa}).
(See the introduction and the first part of \cite{KaPES2} for the precise definition of $A(X,0,\mathbf{F}_{p},\exp(\frac{2i\pi\bullet}{p}))$.)
A remarkable point about $A$-numbers is made explicit in \cite[Lem. 4.3]{FouKa}: 
$A(X,f,k,\psi)=0$ is equivalent to the fact that there exists a dense open subset $U$ in $\mathbb{A}^{N}_{k}$ such that for any finite extension $E$ of $k$ and any $h\in U(E)$
the exponential sum 
$$\sum_{x\in X(E)}\psi\left(\tr_{E/k}\left(f(x)+\sum_{i}h_{i}x_{i}\right)\right)$$
vanishes.

Assuming the $A$-number does not vanish, Fouvry and Katz (\cite[Cor. 4.5]{FouKa}) prove
a very precise estimate for that type of exponential sums. 
The result improves substantially a previous result of Katz and Laumon \cite {KL}
about the dimension of the set of parameters $h$ for which the exponential sum has a given size.

The philosophy underlying the present work is that
``most" schemes $X$ of finite type over $\mathbf{Z}$ should have a non-zero $A$-number.  
More generally, we can study sets of the form
$\lbrace p \in \mathcal{P}, p\nmid (N_{X}(p) -a)\rbrace$ for arbitrary $a\in \mathbf{Z}$, 
or finite intersections of such sets. 
In fact we do not even need the scheme $X$ to be affine, nor is it required that the generic fibre be smooth.
We show that the sets of primes we are interested in are either empty or that they have positive lower density.
In the latter case this proves a strong form of Fouvry and Katz's criterion (\cite[Th. 8.1]{FouKa}).

To state our main result, let us first recall the definition of the densities.
Let $E$ be a subset of the set of primes $\mathcal{P}$. 
Define the upper-density and lower-density of $E$ as
$$\dens_{\sup}(E)=\limsup_{x\rightarrow \infty}\frac{\vert\lbrace p \in E, p \leq x\rbrace\vert}{\vert\lbrace p \in \mathcal{P}, p \leq x\rbrace\vert}$$
and
$$\dens_{\inf}(E)=\liminf_{x\rightarrow \infty}\frac{\vert\lbrace p \in E, p \leq x\rbrace\vert}{\vert\lbrace p \in \mathcal{P}, p \leq x\rbrace\vert}.$$
If these quantities coincide, we say that the set $E$ has a (natural) density. 
We denote this value by $\dens(E)$.
It is clear that if $\dens(E)>0$ or if $\dens_{\inf}(E)>0$ then $E$ is infinite.

We can now state the main result of this paper.
\begin{theo}\label{TheTheorem}
Let $X$ be a scheme of finite type over $\mathbf{Z}$. 
Suppose 
\begin{itemize}
\item either $\dim(X/\mathbf{Q})\leq 2$ 
\item or $\dim(X/\mathbf{Q})= 3$ 
and there is a projective resolution of singularities $Y$ of $X$ such that $b_{3}(Y)=0$.
\end{itemize}
Let $\Sigma'_{X}$ be the finite set of primes of bad reduction for $X$. 
Then if there exists a prime $p_{0}\notin \Sigma'_{X}$
satisfying $p_{0} \nmid N_{X}(p_{0})$,
one has
$$\dens_{\inf}\lbrace p \in\mathcal{P}, p\nmid N_{X}(p)\rbrace>0.$$

More generally, for every $a_{1}, \ldots, a_{n} \in \mathbf{Z}$,
if there exists a prime $p_{0}\notin \Sigma'_{X}$ 
satisfying $p_{0}\nmid \prod_{i=1}^{n}(N_{X}(p_{0}) - a_{i})$,
one has $$\dens_{\inf}\bigcap_{i=1}^{n}\lbrace p \in\mathcal{P}, N_{X}(p)\not\equiv a_{i}\ [\bmod\ p]\rbrace>0.$$
\end{theo}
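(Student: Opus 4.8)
The plan is to reduce the computation of $N_{X}(p)\ [\bmod\ p]$ to the trace of $\Frob_{p}$ on a \emph{fixed} virtual Galois representation of small weight, and then to invoke the Chebotarev density theorem once.

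\emph{Step 1 (reduction to low-weight cohomology).} Using additivity of $p\mapsto N_{(\cdot)}(p)$ over locally closed decompositions, Noetherian induction on dimension, resolution of singularities (available in characteristic $0$), and compactification of the smooth strata, one reduces to the case where $X$ is smooth and projective over $\mathbf{Q}$ of dimension $d\le 3$ (with $b_{3}=0$ if $d=3$). For such $X$ and $p$ of good reduction, the Grothendieck--Lefschetz trace formula together with smooth and proper base change give, for $\ell\neq p$,
\[
N_{X}(p)=\sum_{i=0}^{2d}(-1)^{i}\tr\!\big(\Frob_{p}\mid H^{i}(X_{\overline{\mathbf{Q}}},\mathbf{Q}_{\ell})\big),
\]
where $\Frob_{p}$ is now a Frobenius conjugacy class in $\Gal(\overline{\mathbf{Q}}/\mathbf{Q})$ and the right-hand side is an integer independent of $\ell$. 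By Poincaré duality and the Weil bounds, every eigenvalue of $\Frob_{p}$ on $H^{i}$ with $i\ge d+1$ has $p$-adic valuation $\ge i-d\ge 1$, so those terms vanish modulo $p$; the hypothesis on $d$ (and $b_{3}$) removes the remaining dangerous degree $i=d$ when $d=3$. Hence, outside a finite set of primes,
\[
N_{X}(p)\equiv\tr\!\big(\Frob_{p}\mid W\big)\ [\bmod\ p],\qquad W:=\sum_{i=0}^{2}(-1)^{i}\big[H^{i}(X_{\overline{\mathbf{Q}}},\mathbf{Q}_{\ell})\big],
\]
a fixed virtual representation of $\Gal(\overline{\mathbf{Q}}/\mathbf{Q})$ of weights $\le 2$; in particular $|\tr(\Frob_{p}\mid W)|\le Cp$ for a constant $C$ and all admissible $p$.

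\emph{Step 2 (finitely many Chebotarev conditions).} Because $|\tr(\Frob_{p}\mid W)-a_{i}|\le Cp+|a_{i}|$, the congruence $N_{X}(p)\equiv a_{i}\ [\bmod\ p]$ forces $\tr(\Frob_{p}\mid W)-a_{i}=kp$ for some integer $k$ with $|k|\le C':=C+\max_{i}|a_{i}|$, i.e. $\tr(\Frob_{p}\mid W_{k})=a_{i}$, where $W_{k}:=W-k\,[\mathbf{Q}_{\ell}(-1)]$ and $\Frob_{p}$ acts on $\mathbf{Q}_{\ell}(-1)$ by $p$. Fixing any auxiliary prime $\ell$ and letting $\overline{\rho}_{\ell}$ be the mod-$\ell$ representation on $W$ (together with the mod-$\ell$ cyclotomic character), $G_{\ell}:=\overline{\rho}_{\ell}(\Gal(\overline{\mathbf{Q}}/\mathbf{Q}))$, an equality of integers forces the corresponding congruence mod $\ell$, whence
\[
\big\{p:\ p\mid\textstyle\prod_{i}(N_{X}(p)-a_{i})\big\}\ \subseteq\ \big\{p:\ \overline{\rho}_{\ell}(\Frob_{p})\in\mathcal{C}_{\ell}\big\}\cup(\text{finite set}),
\]
where $\mathcal{C}_{\ell}:=\{g\in G_{\ell}:\ \tr(g\mid W_{k})=a_{i}\text{ in }\mathbf{F}_{\ell}\text{ for some }i\text{ and some }|k|\le C'\}$ is a union of conjugacy classes.

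\emph{Step 3 (choice of $\ell$).} Let $p_{0}$ be as in the hypothesis (discarding finitely many primes, we may assume the reduction of Step 1 is valid at $p_{0}$), so $\tr(\Frob_{p_{0}}\mid W)\not\equiv a_{i}\ [\bmod\ p_{0}]$ for all $i$. Then for every $i$ and every $|k|\le C'$ the integer $\tr(\Frob_{p_{0}}\mid W_{k})-a_{i}=\tr(\Frob_{p_{0}}\mid W)-kp_{0}-a_{i}$ is $\not\equiv 0\ [\bmod\ p_{0}]$, hence nonzero; let $M$ be their product. Choosing a prime $\ell\neq p_{0}$, avoiding the finitely many bad primes, with $\ell\nmid M$, the element $\overline{\rho}_{\ell}(\Frob_{p_{0}})\in G_{\ell}$ lies outside $\mathcal{C}_{\ell}$, so $\mathcal{C}_{\ell}\subsetneq G_{\ell}$. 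By Chebotarev applied to the finite Galois extension cut out by $\overline{\rho}_{\ell}$,
\[
\dens_{\sup}\big\{p:\ p\mid\textstyle\prod_{i}(N_{X}(p)-a_{i})\big\}\ \le\ \frac{\#\mathcal{C}_{\ell}}{\#G_{\ell}}\ \le\ 1-\frac{1}{\#G_{\ell}}\ <\ 1,
\]
which is exactly $\dens_{\inf}\bigcap_{i}\{p:\ N_{X}(p)\not\equiv a_{i}\ [\bmod\ p]\}\ge 1/\#G_{\ell}>0$; taking $n=1$, $a_{1}=0$ gives the first assertion.

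\emph{Main obstacle.} Steps 2--3 are essentially formal. The real work is Step 1: showing that $N_{X}(p)\ [\bmod\ p]$ is governed, for all but finitely many $p$, by one fixed Galois representation of weight $\le 2$. This is precisely where the dimension restriction enters — it is what bounds the relevant weight, hence the number of admissible $k$ in Step 2 independently of $p$ — and carrying it out rigorously requires care with the resolution of singularities, with the compactifications of the non-proper strata, and with the $p$-adic valuations of Frobenius eigenvalues used to discard the cohomology in degrees $>d$ (and in degree $3$ when $d=3$).
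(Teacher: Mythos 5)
Your argument is correct, and its two halves line up with the paper's two pillars in an interesting way. Your Step 1 is exactly the paper's key technical result (Theorem \ref{Th_KeyLemma}): the paper proves it by Lang--Weil for the $1$-dimensional pieces, Poincar\'e duality plus the Weil bounds to discard $H^{i}$, $i\geq d+1$ (and $b_{3}(Y)=0$ to discard $i=3$), and Hironaka plus induction on dimension to assemble the general case; be aware that your phrasing ``one reduces to $X$ smooth and projective'' and the display defining $W$ as $\sum_{i\le 2}(-1)^{i}[H^{i}(X_{\overline{\mathbf{Q}}},\mathbf{Q}_{\ell})]$ are literally valid only in that smooth projective case --- for general $X$ the virtual representation must be built from the resolution $Y$, the boundary $Y-V$, and the singular locus $X-U$ (recursively), which is precisely Proposition \ref{Cor_dim23}; you flag this, and the sketched induction does deliver it. Where you genuinely diverge is the endgame. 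The paper feeds the function $M_{X,a_{i}}+kf_{\mathbb{A}^{1}}$ (twisted by a large $k$ to force positivity) into Serre's Frobenian-set theorem (Theorem \ref{Serre}) with the \emph{fixed integer modulus} $m_{i}+kp_{0}$ built from the value at $p_{0}$, and exhibits a positive-density Frobenian subset of the good set containing $p_{0}$. You instead fix a single auxiliary prime $\ell$, chosen not to divide the nonzero integer $M=\prod_{i,\lvert k\rvert\le C'}\bigl(\tr(\Frob_{p_{0}}\mid W)-kp_{0}-a_{i}\bigr)$ (nonzero precisely because of the hypothesis at $p_{0}$ --- the same trick the paper uses), trap the bad set inside the proper union of conjugacy classes $\mathcal{C}_{\ell}\subsetneq G_{\ell}$, and conclude $\dens_{\inf}\geq 1/\#G_{\ell}>0$ by one application of Chebotarev. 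The bookkeeping is different (your route needs the Tate twists $W_{k}$ with $\lvert k\rvert\le C'$, the mod-$\ell$ cyclotomic character, and Galois-stable lattices so that integer trace identities reduce mod $\ell$; the paper's route needs the $k$-twist and the inequality $m_{i}+kp_{0}\geq b_{+}+k$ instead), but both hinge on the same two facts --- the weight-$\le 2$ congruence with $\lvert\tr\rvert\le Cp$, and $p_{0}$ forcing the relevant integers to be nonzero --- so your version is a legitimate, slightly more quantitative repackaging of the paper's deduction rather than a new mechanism; what it does not recover (and does not need) is the paper's extra information that the sets involved are Frobenian with rational density.
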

In particular in the case $\dim(X/\mathbf{Q})\leq 2$, no assumption about the geometry of a resolution of singularities of $X$ is needed.
Here $b_{3}(Y)$ is the third Betti number of $Y$ (definitions will be recalled later).
In the case $\dim(X)=3$, there is no reason to believe that the assumption $b_{3}(Y)=0$ is generic. 
For example a smooth hypersurface $Y$ in $\mathbb{P}^{4}$ has often $b_{3}(Y)\neq 0$. 
Still this condition is not empty and we present a way to construct schemes satisfying this condition in section \ref{Sec_ex3fold}.

Theorem \ref{TheTheorem} is proved in section \ref{sec_proof}, 
as a consequence of Theorem \ref{Serre} and Theorem \ref{Th_KeyLemma}. 
The reader will find there more precisions about the set of bad reduction $\Sigma'_{X}$.
Theorem \ref{Serre} is essentially Serre's theorem \cite[Th. 6.3]{Ser12} 
about the distribution of $N_{X}(p)\ [\bmod\ m]$ as $p$ varies and $m$ is fixed.
The idea of Theorem \ref{Th_KeyLemma} is to get rid of the higher degree cohomology 
to reduce to Galois representations whose traces of Frobenius are bounded by a multiple of $p$.
It is quite easy in the case $\dim(X)=1$.
When $\dim(X)=2$ we combine the arguments for curves with Poincaré Duality.
However the method does not seem to apply in higher dimension without strong hypotheses.
In section \ref{Sec_Value_1}, we present a variant of Theorem \ref{TheTheorem} where we do not even require the existence of a suitable prime $p_{0}$ (see Theorem \ref{Th_value1}).

Combining \cite[Th. 8.1]{FouKa}, \cite[Cor. 4.5]{FouKa} and Theorem \ref{TheTheorem} we deduce the following strong low-dimensional version of \cite[Cor. 4.5]{FouKa}.

\begin{cor}\label{ANumb} 
 With notations and assumptions as in Theorem \ref{TheTheorem}
 assume there exists $D\in \mathbf{Z}$ such that
 $X[1/D] \rightarrow \mathbb{A}_{\mathbf{Z}[1/D]}^{n}$ is a smooth closed subscheme of relative dimension $d$ with geometrically connected fibres.
 Assume there exists a prime $p_{0}\notin \Sigma'_{X}$ such that $p_{0}\nmid N_{X}(p_{0})$ then:
 \begin{enumerate}[label=(\roman*)]

\item\label{CorPoint_Anonzero} for every function $f$ on $X$, for all primes $p$ outside of a finite set $\Sigma''_{X}$ containing $\Sigma'_{X}$, 
 for all $\alpha\geq 1$, and for all additive characters $\psi$ of $\mathbf{F}_ {p^{\alpha}}$, the $A$-number $A(X,f,\mathbf{F}_{p^{\alpha}},\psi)$ is non-zero,

\item\label{CorPoint_ExpSumBound} for $f$ fixed there exists a constant $C$ depending on $X$ and $f$, 
and a closed subscheme $X_{2}\subset\mathbb{A}^{n}_{\mathbf{Z}[1/D]}$ 
of relative dimension at most $n-2$
such that for every $p\notin \Sigma''_{X}$, for every $\alpha\geq1$, for every non-trivial additive character $\psi$ of $\mathbf{F}_ {p^{\alpha}}$
and for every $h \in (\mathbb{A}^{n}_{\mathbf{Z}[1/D]}-X_{2})(\mathbf{F}_{p^{\alpha}})$
one has
$$\left\lvert \sum_{x\in X(\mathbf{F}_{p^{\alpha}})}\psi(f(x)+\sum_{i}h_{i}x_{i})\right\rvert \leq Cp^{\frac{\alpha d}{2}}.$$
\end{enumerate}
\end{cor}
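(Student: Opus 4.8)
The plan is to chain together the three results quoted just before the statement; the only genuinely new ingredient is Theorem \ref{TheTheorem}, everything else being read off from \cite{FouKa}. First I would record that the hypotheses place us simultaneously in the setting of Theorem \ref{TheTheorem} and of \cite{FouKa}: since $X[1/D]\hookrightarrow\mathbb{A}^{n}_{\mathbf{Z}[1/D]}$ is a smooth closed immersion of relative dimension $d$ with geometrically connected fibres, the generic fibre $X/\mathbf{Q}$ is a smooth affine geometrically connected variety of dimension $d=\dim(X/\mathbf{Q})$, hence so is $X/\mathbf{C}$, and the phrase ``with notations and assumptions as in Theorem \ref{TheTheorem}'' forces $d\le 3$, with $b_{3}(Y)=0$ for a projective resolution $Y$ of $X$ when $d=3$. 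The prime $p_{0}\notin\Sigma'_{X}$ with $p_{0}\nmid N_{X}(p_{0})$ then lets us invoke Theorem \ref{TheTheorem}, which gives
$$\dens_{\inf}\lbrace p\in\mathcal{P},\ p\nmid N_{X}(p)\rbrace>0;$$
in particular the set $\lbrace p\in\mathcal{P},\ p\nmid N_{X}(p)\rbrace$ is infinite, so \cite[Th. 8.1]{FouKa} applies (its hypothesis, the smoothness of $X/\mathbf{C}$, holds) and shows that the $A$-number of $X$ is non-zero.

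The next step is to upgrade this non-vanishing to assertion \ref{CorPoint_Anonzero}. The quantity $A(X,f,\mathbf{F}_{p^{\alpha}},\psi)$ is, by definition, the rank of the lisse sheaf produced by the $\ell$-adic Fourier transform (\cite[Part 4]{FouKa}); spreading $X$ and this sheaf out over $\Spec\mathbf{Z}[1/D']$ for a suitable multiple $D'$ of $D$, and using that the rank of a lisse sheaf is constant on a dense open of the base, one produces a finite set $\Sigma''_{X}\supseteq\Sigma'_{X}$ — which we arrange also to contain the primes dividing $D'$ — such that this rank equals its non-zero generic (characteristic-zero) value for all $p\notin\Sigma''_{X}$. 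Since by \cite{FouKa} the rank in question is independent of the auxiliary data $f$, $\alpha$ and the non-trivial additive character $\psi$ in this good range, $A(X,f,\mathbf{F}_{p^{\alpha}},\psi)$ is one and the same non-zero integer for every function $f$ on $X$, every $p\notin\Sigma''_{X}$, every $\alpha\ge 1$ and every $\psi$; this is \ref{CorPoint_Anonzero}. I expect this to be the main obstacle: one has to verify that \cite{FouKa}'s construction of the Fourier-transform sheaf and the identification of its rank underlying \cite[Lem. 4.3]{FouKa} really do commute with reduction modulo $p$ and are uniform in $(f,\alpha,\psi)$ — i.e. that a \emph{single} finite set of exceptional primes works for all these parameters at once, not merely one finite set for each $(f,\psi)$.

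Finally, assertion \ref{CorPoint_ExpSumBound} follows by feeding the non-vanishing of $A(X,f,\mathbf{F}_{p^{\alpha}},\psi)$, for $f$ fixed and $p\notin\Sigma''_{X}$, into \cite[Cor. 4.5]{FouKa}: this yields, for every $\alpha\ge 1$ and every non-trivial additive character $\psi$ of $\mathbf{F}_{p^{\alpha}}$, the square-root cancellation estimate
$$\left\lvert\sum_{x\in X(\mathbf{F}_{p^{\alpha}})}\psi(f(x)+\sum_{i}h_{i}x_{i})\right\rvert\le Cp^{\frac{\alpha d}{2}}$$
with $C$ depending only on $X$ and $f$, valid for $h$ outside an exceptional locus of codimension at least $2$ in $\mathbb{A}^{n}$. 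It then remains to spread the (constructible) exceptional locus of \cite[Cor. 4.5]{FouKa} out over $\mathbf{Z}[1/D]$ so that it is the set of $\mathbf{F}_{p^{\alpha}}$-points of one closed subscheme $X_{2}\subset\mathbb{A}^{n}_{\mathbf{Z}[1/D]}$ of relative dimension at most $n-2$, the same for all $p\notin\Sigma''_{X}$ and all $\alpha$; after enlarging $\Sigma''_{X}$ once more if necessary, this is exactly \ref{CorPoint_ExpSumBound}.
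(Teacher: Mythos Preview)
Your proposal is correct and follows exactly the approach the paper indicates: the paper does not give a separate proof of Corollary~\ref{ANumb} but simply states that it follows by ``combining \cite[Th.~8.1]{FouKa}, \cite[Cor.~4.5]{FouKa} and Theorem~\ref{TheTheorem}'', which is precisely the chain you spell out. Your elaboration of the spreading-out step and your flagging of the uniformity of $\Sigma''_{X}$ in $(f,\alpha,\psi)$ go beyond what the paper makes explicit, but are consistent with the stated result.
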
 

We note in passing that a similar phenomenon (``only one prime is needed instead of infinitely many")
appears in the theory of arithmetic groups, see Lubotzky's paper ``one for almost all" \cite{LubOfAA}.\\

Lower bounds (and even sometimes the determination of the exact value) for some $A$-numbers already appear in articles by Katz (see \cite{KaES}, \cite{KaPES2}), 
but most of them only hold for particular varieties given by certain forms of equation.
The arguments given by Katz are of geometric nature. 
In Example \ref{Ex3fold} (Section \ref{Sec_ex3fold}), 
we give a new example of a variety with non-zero $A$-number.
The argument is computational and comes as a consequence of Theorem \ref{TheTheorem}.

In the case of an affine smooth hypersurface of $\mathbb{A}^{3}$, 
Katz \cite[Rem. (ii) p. 150]{KaES} gives an explicit formula for the $A$-number
involving the degree of its equation.
Using this formula we show in section \ref{sec_nonEx} that the converse of Corollary \ref{ANumb}\ref{CorPoint_Anonzero} is false.
Precisely, we consider in Proposition \ref{NonEx} a family of affine surfaces $S$ satisfying $p \mid N_{S}(p)$ for every prime $p$, while the $A$-numbers of these surfaces are non-zero.

Besides the close link with $A$-number of varieties
the study of the density of $\lbrace p \in\mathcal{P}, N_{X}(p)\in S(p)\rbrace$ 
where $S(p)$ is a set that may (but does not have to) depend on $p$ 
lies at the heart of many other important problems in arithmetic geometry. 
For instance the Sato--Tate conjecture solves completely (and in a very precise way) 
the case where $X$ is an elliptic curve and $S(p)$ is an interval $(p+1+a\sqrt{p},p+1+b\sqrt{p})$ 
(with $a$ and $b$ independent of $p$). 

In Serre's book \cite{Ser12} the Chebotarev Density Theorem is used to prove results about the density of sets of the type
$\lbrace p \in \mathcal{P}, N_{X}(p)\equiv a\ [\bmod\ m]\rbrace$.
Some of the ideas underlying \cite{Ser12} can already be found in his article \cite{Ser81} especially in Section $8$ about elliptic curves.
Serre's result is also used in the recent preprint of Sawin \cite{Sawin} where the author gives explicit values for the density of the set of ordinary primes for abelian surfaces (over $\mathbf{Q}$).\\

The second main point that this present paper addresses is the question of effectiveness in Theorem \ref{TheTheorem}.
How far does one have to go to find a suitable prime?

We solve the question by using a double sieve as in \cite{EEHK}.
It is based on a result of Kowalski \cite[Th. 8.15]{KowBleu} and Gallagher's larger sieve.
We obtain an upper bound for the least prime in $\lbrace p\in \mathcal{P}, p\nmid N_{X}(p)\rbrace$ 
on average over a $1$-parameter family of hyperelliptic curves $X$.

\begin{theo}\label{Th_leastPrime_f}
Let $g\geq 2$ be an integer and let $f \in \mathbf{Z}[T]$ be a separable polynomial of degree $2g$.
For each $u\in\mathbf{Z}$ we consider the curve $C_{u}$ with affine model  
$$C_{u}: y^{2}= f(t)(t-u).$$
Let $T\geq 1$.
There exists a constant $K_{g}$ depending only on $g$ such that
for every $\alpha_{1},\ldots,\alpha_{n} \in \mathbf{Z}$,
for ``most" $u\in \mathbf{Z}\cap[-T,T]$, 
the least prime $p$ of good reduction for $C_{u}$ and satisfying 
$p\nmid \prod_{i=1}^{n}(N_{C_{u}}(p) - \alpha_{i})$ is at most of size 
$$\left(2K_{g}\log(T)\right)^{\gamma/2}
(\log(2K_{g}\log(T)))^{\frac{\gamma}{2}\left(1 - \frac{2}{\gamma + 2n - 2}\right) },$$
where one can take $\gamma=4g^{2}+2g+4$.
\end{theo}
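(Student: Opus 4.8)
The plan is to recast the divisibility condition as a condition on the Frobenius of $C_u$ and then to sieve in the one-parameter family $(C_u)_{u\in\mathbf{Z}}$, combining Kowalski's large sieve for Frobenius with Gallagher's larger sieve, as in the ``double sieve'' of \cite{EEHK}. The geometric reduction comes first. Since $f$ is separable of degree $2g$, for $u$ not a root of $f$ the polynomial $f(t)(t-u)$ is separable of odd degree $2g+1$, so $C_u$ is a smooth projective hyperelliptic curve of genus $g$; its discriminant equals, up to a nonzero constant, $\operatorname{disc}(f)\,f(u)^{2}$, a nonzero polynomial in $u$ of degree $4g$, so for $|u|\le T$ the number of primes of bad reduction for $C_u$ is $\ll_{f}\log T$, \emph{uniformly} in $u$. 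For a good prime $p$ write $N_{C_u}(p)=p+1-a_u(p)$, so that $a_u(p)\in\mathbf{Z}$ and $|a_u(p)|\le 2g\sqrt p$ by the Weil bound. If $p$ exceeds a constant $C_0=C_0(g,\alpha_1,\dots,\alpha_n)$ — chosen so that $4g\sqrt p<p$ and $2g\sqrt p\ge|1-\alpha_i|$ for all $i$ — then $[-2g\sqrt p,2g\sqrt p]$ contains at most one integer in each residue class mod $p$, and $N_{C_u}(p)\equiv\alpha_i\ [\bmod\ p]$ becomes the \emph{equality} $a_u(p)=1-\alpha_i$. Hence, for $p>C_0$, the condition $p\mid\prod_{i=1}^n(N_{C_u}(p)-\alpha_i)$ depends only on $u\bmod p$ and reads $a_u(p)\in V:=\{1-\alpha_1,\dots,1-\alpha_n\}$, a \emph{fixed} finite set of integers.

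Next I would set up the sieve. Fix a cut-off $z$ and let $\mathcal{B}=\mathcal{B}(z,T)$ be the set of $u\in\mathbf{Z}\cap[-T,T]$, not a root of $f$, such that every prime $p\le z$ of good reduction for $C_u$ satisfies $a_u(p)\in V$; for $u\notin\mathcal{B}$ there is then a prime $p\le z$ of good reduction with $p\nmid\prod_i(N_{C_u}(p)-\alpha_i)$, so it is enough to show $|\mathcal{B}(z,T)|=o(T)$ for $z$ of the claimed size. For a prime $C_0<p\le z$ put
$$\omega(p):=\#\{\,r\bmod p:\ p\mid f(r),\ \text{ or }\ p\nmid f(r)\ \text{and}\ a_r(p)\in V\,\}.$$
Then $\mathcal{B}(z,T)$ lies in at most $\omega(p)$ residue classes modulo each such $p$ (the classes $p\mid f(r)$ accounting for bad reduction, which is unconstrained for $u\in\mathcal{B}$), so Gallagher's larger sieve yields
$$|\mathcal{B}(z,T)|\ \le\ \frac{\sum_{C_0<p\le z}\log p-\log(2T+1)}{\sum_{C_0<p\le z}\dfrac{\log p}{\omega(p)}-\log(2T+1)}$$
whenever the denominator is positive.

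The analytic heart is to bound $\mathcal{B}(z,T)$, which requires controlling the number of $r\bmod p$ with a prescribed value of $a_r(p)$. I would get this from Kowalski's large sieve for Frobenius \cite[Th.~8.15]{KowBleu} applied to $(C_u)$; its monodromy hypothesis holds here, as this family has the full symplectic monodromy $\operatorname{Sp}_{2g}$ — a standard fact for one-parameter families of hyperelliptic curves with a single moving branch point, coming from the Picard--Lefschetz transvections around the $2g$ roots of $f$. The large sieve controls the relevant counting function on average over $p$, and — this is the point of the ``double sieve'' of \cite{EEHK} — feeding that averaged control into the larger-sieve inequality above lets the $n$ excluded values be handled economically, through the density $\asymp n/p$ of the locus $\{a=1-\alpha_i\}_i$ among the Frobenius classes over $\mathbf{F}_p$, so that $n$ enters only the secondary terms. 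With $\gamma=4g^{2}+2g+4$ $(=2\dim\operatorname{Sp}_{2g}+4)$, this gives $|\mathcal{B}(z,T)|=o(T)$ as soon as
$$\frac{z^{2/\gamma}}{(\log z)^{\,1-\frac{2}{\gamma+2n-2}}}\ \gg\ K_g\log T$$
for a suitable $K_g$ depending only on $g$; solving this for the least admissible cut-off, for which $\log z\sim\tfrac{\gamma}{2}\log\log T$, produces precisely the value
$$z=\bigl(2K_g\log T\bigr)^{\gamma/2}\bigl(\log(2K_g\log T)\bigr)^{\frac{\gamma}{2}\left(1-\frac{2}{\gamma+2n-2}\right)}$$
asserted in the statement.

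The hard part will be the \emph{effective} form of this Frobenius sieve estimate, with constants explicit in $g$: to pin down the integer $a_r(p)$ among reals of size $\sqrt p$ one must resolve scale $1$ out of $\sqrt p$, hence work with $\asymp\sqrt p$ Fourier frequencies, and the $\operatorname{Sp}_{2g}$-conductors of the corresponding $\ell$-adic sheaves on the affine line are what inflate the exponent to $\gamma=4g^{2}+2g+4$ (a sharp unconditional vertical Sato--Tate law would give a much smaller value, but not with the uniformity required). Verifying the symplectic monodromy of the family is a further, standard, geometric input, while the bookkeeping of the $\ll_f\log T$ bad primes of each $C_u$ is harmless, as $z/\log z\gg\log T$ leaves far more primes than the larger sieve needs.
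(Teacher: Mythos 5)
Your proposal follows essentially the same route as the paper: reduce the divisibility condition to the Frobenius trace $a_u(p)$ lying in a fixed finite set (via the odd-degree model and the Weil bound), bound the exceptional residues modulo each $p$ by Kowalski's large sieve for Frobenius under the full $Sp_{2g}$ monodromy of this family (Yu/Hall), and feed that bound into Gallagher's larger sieve, optimizing the cut-off exactly as in the paper's Theorem \ref{Th_Smallprime}. Only cosmetic slips remain --- for the affine model one has $N_{C_u}(p)=p-a_u(p)$ rather than $p+1-a_u(p)$ (a harmless relabeling of the forbidden trace values), and the forbidden-trace density is $\asymp n/\ell$ per auxiliary prime $\ell$ rather than $n/p$ --- neither of which affects the argument.
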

This theorem is proven in section \ref{Sec_DoubleSieve} under a more general and precise form.
In particular we give more quantitative precisions on what is meant by ``for most".
Theorem \ref{Th_leastPrime_f} is a consequence of Theorem \ref{Th_Smallprime} and a theorem of Yu about big monodromy (see \textit{e.g.} \cite{Hall}).
The idea underlying this result is that the least prime $p$ not dividing $N_{C}(p)$ should be small compared to the coefficients of an equation defining $C$.

Finally in section \ref{Sec_bigPrime} we present some examples of curves for which the least prime $p\nmid N_{C}(p)$ can become arbitrarily large.

\bigskip

\noindent\emph{Notations.}
For $X$ a scheme of finite type over $\mathbf{Z}$,
we denote $X_{0}:=X\times_{\mathbf{Z}}\mathbf{Q}$ the generic fiber.
Given $p$ a prime, 
$X_{p}:=X\times_{\mathbf{Z}}\mathbf{F}_{p}$ is the ``reduction modulo $p$" of $X$.
In this paper the words curve, surface and threefold mean scheme of finite type over $\mathbf{Z}$ of relative dimension $1$, $2$, or $3$ respectively.
By $f(x)\ll g(x)$ or $f(x)=O(g(x))$ we mean that there exists a constant $C\geq0$ such that $\vert f(x)\vert\leq Cg(x)$ for all $x$ such that $f(x)$ is defined. 
The ``implicit constant" $C$ may depend on some parameters.

\bigskip

\noindent\emph{Acknowledgements.}
This paper contains some of the results of my doctoral dissertation.
I thank my advisor Florent Jouve for all his advice, help and time spent correcting the first drafts of this paper.
I am grateful to Antoine Chambert-Loir and \'{E}tienne Fouvry for encouraging me to get more general statements.
Many mathematicians helped me to understand the geometry of threefolds; 
I thank Olivier Benoist, David Harari, Olivier Wittenberg, François Charles and Alena Pirutka for their explanations.
I would like to thank Davide Lombardo for his comments, and particularly for providing Example \ref{ExCurveLargePrimeg2}. 
I have also benefited from conversations with Jean-Louis Colliot-Thélène, Gérard Laumon,
Yang Cao, Tiago Jardim da Fonseca and Cong Xue.
Most of the computations presented here were performed with \texttt{Sage} \cite{SageBook}.

\section{Serre's result}\label{Sec_SerreTh}

The proof of Theorem \ref{TheTheorem} uses a generalized version of Serre's Theorem \cite[Th. 6.3]{Ser12}.
This is Theorem \ref{Serre} below.
We give a proof of this statement but we do not get into the details when it is not necessary
since the main ideas are essentially contained in the first six chapters of \cite{Ser12}.

First we need a way to compute $N_{X}(p)$.
Combining the Grothendieck--Lefschetz trace formula 
with a comparison theorem for cohomologies with compact support 
Serre gets for a $d$-dimensional separated scheme $X$ over $\mathbf{Z}$, 
the existence of a finite set $\Sigma_{X}\subset \mathcal{P}$ such that,
for $p$ not in $\Sigma_{X}$
and for any prime $\ell\neq p$,
$$N_{X}(p) = \sum_{i=0}^{2d}(-1)^{i}\tr(\Frob_{p}\mid H^{i}_{c}(X\times\overline{\mathbf{Q}},\mathbf{Q}_{\ell})),$$
where $\Frob_{p}$ is a representative of the image of the geometric Frobenius at $p$ 
(we assume a choice of isomorphism has been made).
For details of the proof, see \cite[Part 4.8.2-4.8.4]{Ser12} and \cite[p. 49-50]{SGA4.5}
(note that the argument uses the fact that the sheaf $\mathbf{Q}_{\ell}$ is locally constant).
In \cite{Ser12,SGA4.5} the set $\Sigma_{X}$ is not given explicitly:
it comes from a deep stratification theorem \cite[Th. 3.2.1]{KL}.
In the case $X/\mathbf{C}$ is proper and smooth then 
one can take $\Sigma_{X}$ to be the locus of bad reduction of $X$ \cite[p. 230 Cor. 4.2]{Mil}.

Fix a prime $\ell$. 
For simplicity we will write $H^{i}(X,\ell)$ for 
$H^{i}_{c}(X\times\overline{\mathbf{Q}},\mathbf{Q}_{\ell})$.
We are interested in functions defined over primes of the following type:
\begin{eqnarray}\label{def_frobennienneElem}
f_{X,i}:\mathcal{P}-(\Sigma_{X}\cup\lbrace\ell\rbrace) &\rightarrow & \mathbf{Z} \\
p &\mapsto & \tr(\Frob_{p}\mid H^{i}(X,\ell)). \nonumber
\end{eqnarray}
This kind of functions can be decomposed:

\begin{center}
\begin{tikzpicture}
  \matrix (m) [matrix of math nodes,row sep=3em,column sep=3em,minimum width=2em]
   { \mathcal{P}-\Sigma_{X}\cup\lbrace\ell\rbrace & 
     \Gamma_{\Sigma_{X,\ell}}:=\Gal(\overline{\mathbf{Q}}_{\Sigma_{X,\ell}}/\mathbf{Q}) & 
     GL(H^{i}(X,\ell)) &
     \mathbf{Q}_{\ell} \\};
  \path[->] (m-1-1) edge node[above]{$\Frob$} (m-1-2);
  \path[->] (m-1-2) edge node[above]{$\rho_{\ell}$} (m-1-3);
  \path[->] (m-1-3) edge node[above]{$\tr$} (m-1-4);
\end{tikzpicture}
\end{center}
where $\overline{\mathbf{Q}}_{\Sigma_{X,\ell}}/\mathbf{Q}$ is the maximal Galois extension 
unramified outside $\Sigma_{X,\ell}:=\Sigma_{X}\cup\lbrace\ell\rbrace$.
For every prime $p\notin\Sigma_{X,\ell}$, let $\Frob_{p}$ denote the corresponding geometric Frobenius
element of $\Gamma_{\Sigma_{X,\ell}}$, it is well defined up to conjugation. 
The second arrow above is given by the action of $\Gamma_{\Sigma_{X,\ell}}$ on $H^{i}(X,\ell)$ 
which globally fixes the image of $H^{i}_{c}(X\times\overline{\mathbf{Q}},\mathbf{Z}_{\ell})$ in $H^{i}(X,\ell)$. 
Thus we can see the image $\rho_{\ell}(\Gamma_{\Sigma_{X,\ell}})$ as a subgroup of $GL_{b_{i}}(\mathbf{Z}_{\ell})$
where $b_{i}=\dim H^{i}(X,\ell)$ is the $i$-th Betti number of $X$. 
By the Weil conjectures, the image of $f_{X,i}$ is in fact in $\mathbf{Z}$ and independent of $\ell$.

There is a natural way to extend $f_{X,i}$ at $1$:
it is the value of the function $\tr\circ\rho_{\ell}$ at identity in $\Gamma_{\Sigma_{X,\ell}}$.
We set 
\begin{align}\label{form_value1}
f_{X,i}(1):=b_{i}(X)=\dim H^{i}(X,\ell).
\end{align}

Now we can state a generalized version of Serre's Theorem.

\begin{theo}\label{Serre}
Let $(X_{j})_{j}$ be a finite set of schemes of finite type over $\mathbf{Z}$.
For all $j$ let $\Sigma_{X_{j}}$ be the finite set defined as above.
Let $f: (\mathcal{P}-\cup_{j}\Sigma_{X_{j}}) \rightarrow \mathbf{Z}$ 
be a $\mathbf{Z}$-linear combination  of functions $f_{X_{j},i}$.

Then for all $a,m \in \mathbf{Z}$,
the set 
\begin{align}\label{def_EnsE}
E_{a,m}(f)=\lbrace p \in \mathcal{P}-\cup_{j}\Sigma_{X_{j}}, p\nmid m, f(p)\equiv a\ [\bmod\ m]\rbrace
\end{align}
satisfies one of the two following properties:
\begin{itemize}
\item either $E_{a,m}(f)=\emptyset$,
\item or $\dens(E_{a,m}(f))$ exists and is a positive rational number.
\end{itemize}
Moreover, if $f(1)\equiv a\ [\bmod\ m]$ 
then $E_{a,m}(f)\neq\emptyset$.

The same result holds for finite unions or intersections of sets $E_{a_{i},m_{i}}(f_{i})$.
\end{theo}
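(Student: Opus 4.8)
The plan is to deduce everything from the Chebotarev density theorem, after exhibiting a single finite Galois extension of $\mathbf{Q}$ through which the residue $f(p)\bmod m$ is read off from $\Frob_p$. Write $f=\sum_k c_k f_{X_{j_k},i_k}$ with $c_k\in\mathbf{Z}$, and recall from the discussion preceding the theorem that for each prime $\ell$ and each constituent there is a continuous representation $\rho_{j_k,i_k}^{(\ell)}\colon\Gamma\to GL_{b_{i_k}}(\mathbf{Z}_\ell)$ of the Galois group $\Gamma$ of the maximal extension unramified outside $\bigl(\bigcup_j\Sigma_{X_j}\bigr)\cup\{\ell\}$, with $f_{X_{j_k},i_k}(p)=\tr\rho_{j_k,i_k}^{(\ell)}(\Frob_p)$, and that this integer is independent of $\ell$. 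The first step is: for each prime $\ell\mid m$ and each $k$, compose with the reduction $GL_{b_{i_k}}(\mathbf{Z}_\ell)\to GL_{b_{i_k}}(\mathbf{Z}/\ell^{v_\ell(m)}\mathbf{Z})$; the resulting map from $\Gamma$ has finite image, hence factors through $\Gal(L_{k,\ell}/\mathbf{Q})$ for a finite Galois extension $L_{k,\ell}/\mathbf{Q}$ unramified outside $\bigl(\bigcup_j\Sigma_{X_j}\bigr)\cup\{\ell\}$, and $\tr\rho_{j_k,i_k}^{(\ell)}(\Frob_p)\bmod \ell^{v_\ell(m)}$ depends only on the image of $\Frob_p$ in $\Gal(L_{k,\ell}/\mathbf{Q})$. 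Letting $L$ be the compositum of all the $L_{k,\ell}$ (over the finitely many $k$ and the primes $\ell\mid m$) and $G=\Gal(L/\mathbf{Q})$, the Chinese remainder theorem shows that $f(p)\bmod m$ depends only on the conjugacy class of $\Frob_p$ in $G$; this defines a class function $\tilde f\colon G\to\mathbf{Z}/m\mathbf{Z}$ with $f(p)\equiv\tilde f(\Frob_p)\pmod m$. Moreover $L$ is unramified outside $\bigl(\bigcup_j\Sigma_{X_j}\bigr)\cup\{\ell:\ell\mid m\}$, so every $p\notin\bigcup_j\Sigma_{X_j}$ with $p\nmid m$ is unramified in $L$ and $\Frob_p\in G$ is well defined up to conjugacy.

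The second step is the Chebotarev density theorem. Put $C_a=\tilde f^{-1}(a)\subseteq G$, a union (possibly empty) of conjugacy classes. The set of primes $p$ unramified in $L$ with $\Frob_p\in C_a$ has natural density $|C_a|/|G|$, and it contains $E_{a,m}(f)$ and differs from it by finitely many primes. Hence if $C_a=\emptyset$ then $E_{a,m}(f)=\emptyset$; otherwise $\dens(E_{a,m}(f))$ exists and equals $|C_a|/|G|$, a positive rational number. This is the required dichotomy.

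For the remaining assertion, note that evaluating the traces at the identity of $G$ gives $\tilde f(1)\equiv\sum_k c_k\,b_{i_k}(X_{j_k})=f(1)\pmod m$, so the hypothesis $f(1)\equiv a\pmod m$ says precisely that $1\in C_a$; then $C_a$ contains the trivial class, so it is nonempty and $E_{a,m}(f)\ne\emptyset$ (in fact infinite). Finally, for a finite family $(a_t,m_t,f_t)$ one runs the construction simultaneously: enlarge $L$ so that each $f_t(p)\bmod m_t$ is read off from $\Frob_p$ in $G=\Gal(L/\mathbf{Q})$ via a class function $\tilde f_t$, and observe that $\bigcap_t E_{a_t,m_t}(f_t)$ (resp. $\bigcup_t E_{a_t,m_t}(f_t)$) agrees, up to finitely many primes, with the set of $p$ whose Frobenius lies in $\bigcap_t\tilde f_t^{-1}(a_t)$ (resp. $\bigcup_t\tilde f_t^{-1}(a_t)$), again a union of conjugacy classes in $G$; Chebotarev gives the same alternative, and the intersection is nonempty whenever $f_t(1)\equiv a_t\pmod{m_t}$ for every $t$, since then the identity of $G$ lies in it.

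The only genuinely technical point is the first step: packaging the $\ell$-adic representations attached to the various $f_{X_j,i}$ into one finite group $G=\Gal(L/\mathbf{Q})$, controlling the ramification of $L$ (so that the exceptional set stays $\bigcup_j\Sigma_{X_j}$ together with the primes dividing $m$) and using the $\ell$-independence of the traces of Frobenius to pass to the residue modulo $m$ via the Chinese remainder theorem. Once $G$ and the class functions $\tilde f$ (or $\tilde f_t$) are in hand, the rest is a direct application of the Chebotarev density theorem together with elementary manipulations with unions of conjugacy classes; the rationality of the density requires nothing, being the ratio $|C_a|/|G|$ of two integers.
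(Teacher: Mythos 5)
Your proposal is correct and follows essentially the same route as the paper: reduce the $\ell$-adic representations attached to the $f_{X_j,i}$ modulo the relevant prime powers to land in a finite Galois group, apply the Chebotarev density theorem to the union of conjugacy classes cut out by the trace congruence (giving density $\lvert C_a\rvert/\lvert G\rvert$), and use the identity element to treat the value-at-$1$ statement. The only difference is organizational -- you package everything into one compositum $L$ and a single class function $\tilde f$, while the paper first reduces to $f=f_{X,i}$ and $m=\ell^{k}$ and then handles unions and intersections via Chebotarev for a product of Galois groups -- but the underlying argument is the same.
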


The sets $E_{a,m}(f)$ are examples of Frobenian sets as introduced by Serre in \cite[Sec. 3.3]{Ser12}.
In particular for the function $N_{X}=\sum_{i}(-1)^{i}f_{X,i}$, 
the value $N_{X}(1)$ is the Euler-Poincaré characteristic $\chi_{c}(X)$ of $X/\mathbf{C}$.
For ease of exposition we state the following particular case of Theorem \ref{Serre}. 
This is very close to \cite[Th. 1.4]{Ser12}.
\begin{cor}\label{Cor_SerreN}
Let $X$ be a scheme of finite type over $\mathbf{Z}$.
Let $\Sigma_{X}$ be the finite set defined as above.
Then for all $a,m \in \mathbf{Z}$, the set 
$$E_{a,m}(N_{X})=\lbrace p \in \mathcal{P}-\Sigma_{X}, p\nmid m, N_{X}(p)\equiv a\ [\bmod\ m]\rbrace$$
satisfies one of the two following properties:
\begin{itemize}
\item either $E_{a,m}(N_{X})=\emptyset$,
\item or $\dens(E_{a,m}(N_{X}))$ is a positive rational number.
\end{itemize}
Moreover, if $\chi_{c}(X)\equiv a\ [\bmod\ m]$ 
then $E_{a,m}(N_{X})\neq\emptyset$.
\end{cor}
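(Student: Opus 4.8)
The plan is to deduce Corollary \ref{Cor_SerreN} from Theorem \ref{Serre} by taking the single scheme $X_1 = X$ and the function $f = N_X = \sum_{i=0}^{2d}(-1)^i f_{X,i}$. First I would verify that $N_X$ is indeed a $\mathbf{Z}$-linear combination of the elementary frobenian functions $f_{X,i}$ attached to $X$ (with coefficients $(-1)^i$): this is exactly the content of the Grothendieck--Lefschetz trace formula recalled above, which gives $N_X(p) = \sum_{i=0}^{2d}(-1)^i \tr(\Frob_p \mid H^i(X,\ell))$ for all $p \notin \Sigma_X$. Hence Theorem \ref{Serre}, applied with this $f$, immediately yields that for all $a,m \in \mathbf{Z}$ the set
$$E_{a,m}(N_X) = \lbrace p \in \mathcal{P} - \Sigma_X,\ p \nmid m,\ N_X(p) \equiv a\ [\bmod\ m]\rbrace$$
is either empty or has a density which is a positive rational number.

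Second, for the ``moreover'' clause I would use the extension-at-$1$ convention \eqref{form_value1}: by linearity, $N_X(1) = \sum_{i=0}^{2d}(-1)^i f_{X,i}(1) = \sum_{i=0}^{2d}(-1)^i b_i(X)$, which is precisely the compactly supported Euler--Poincaré characteristic $\chi_c(X)$ of $X/\mathbf{C}$ (the alternating sum of Betti numbers of the $\ell$-adic compact-support cohomology, which by comparison equals the topological $\chi_c$ of $X(\mathbf{C})$). Therefore the hypothesis $\chi_c(X) \equiv a\ [\bmod\ m]$ is the same as $N_X(1) \equiv a\ [\bmod\ m]$, and the last assertion of Theorem \ref{Serre} gives $E_{a,m}(N_X) \neq \emptyset$.

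Since this is a direct specialization, there is essentially no obstacle: the only points requiring a word of care are (a) checking that the constant $\Sigma_X$ appearing here is the same finite set of bad primes fed into Theorem \ref{Serre} — it is, by construction — and (b) justifying the identification $N_X(1) = \chi_c(X)$, which follows from the definition \eqref{form_value1} together with the Artin comparison isomorphism between $\ell$-adic and singular cohomology with compact support over $\mathbf{C}$. I would state the proof in two or three sentences, citing Theorem \ref{Serre} for the dichotomy and the formula \eqref{form_value1} for the value at $1$.
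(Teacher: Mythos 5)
Your proof is correct and matches the paper exactly: the corollary is stated there as a direct specialization of Theorem \ref{Serre} to the function $N_{X}=\sum_{i}(-1)^{i}f_{X,i}$, with the ``moreover'' clause coming from the identification $N_{X}(1)=\chi_{c}(X)$ via the value-at-$1$ convention \eqref{form_value1}, just as you argue.
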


\begin{proof}[Proof of Theorem \ref{Serre} \rm{(Serre)}]
We can assume $f=f_{X,i}$. 
Indeed the set $E_{a,m}(f)$ is a finite union of finite intersections of sets $E_{b,m}(f_{X_{j},i})$. 
By the Chinese Remainder Theorem it is enough to prove the theorem for $m=\ell^{k}$ with $\ell$ prime.
Reducing $f_{X,i}$ modulo $\ell^{k}$, we get

\begin{center}
\begin{tikzpicture}
  \matrix (m) [matrix of math nodes,row sep=3em,column sep=4em,minimum width=2em]
  {
     \mathcal{P}-\Sigma_{X,\ell} & \Gamma_{\Sigma_{X,\ell}} & GL_{b_{i}}(\mathbf{Z}_{\ell}) & \\
      & G_{\ell^{k}} & GL_{b_{i}}(\mathbf{Z}/\ell^{k}\mathbf{Z}) & \mathbf{Z}/\ell^{k}\mathbf{Z} \\};
  \path[->] (m-1-1) edge node[above]{$\Frob$} (m-1-2);
  \path[->] (m-1-2) edge (m-1-3);
  \path[->>] (m-1-2) edge (m-2-2);
  \path[->] (m-1-3) edge (m-2-3);
  \path[right hook->] (m-2-2) edge (m-2-3);
  \path[->] (m-2-3) edge node[above]{$\tr$} (m-2-4);
  \path[->] (m-1-2) edge node[above]{$\phi_{\ell^{k}}$} (m-2-3);
\end{tikzpicture}
\end{center}
where $G_{\ell^{k}}$ is the quotient of $\Gamma_{\Sigma_{X,\ell}}$ by the kernel of $\phi_{\ell^{k}}$.
The group $G_{\ell^{k}}$ can be seen as a subgroup of the finite group $GL_{b_{i}}(\mathbf{Z}/\ell^{k}\mathbf{Z})$, 
hence it is finite.
Therefore $G_{\ell^{k}}= \Gal(E/\mathbf{Q})$ is a finite Galois group.
This is a situation where the Chebotarev Density Theorem applies.
Let $C_{a}=\lbrace g \in G_{\ell^{k}}, \tr(g)=a\ [\bmod\ \ell^{k}]\rbrace$, 
it is a union of conjugacy classes in $G_{\ell^{k}}$.
Hence the set of primes $E_{a,\ell^{k}}(f_{X,i})=\lbrace p \in \mathcal{P}-\Sigma_{X,\ell}, \phi_{\ell^{k}}(\Frob_{p,G_{\ell^{k}}})\in C_{a}\rbrace$ has a density given by
$$\dens(E_{a,\ell^{k}}(f_{X,i}))= \frac{\vert C_{a}\vert}{\vert G_{\ell^{k}}\vert}$$
which is rational and positive if and only if $C_{a}$ is non-empty, 
i.e. $E_{a,\ell^{k}}(f_{X,i})$ is non-empty.

Moreover, if $f_{X,i}(1)\equiv a\ [\bmod\ \ell^{k}]$ 
then the identity element of $G_{\ell^{k}}$ is in $C_{a}$.  

In the case one has a finite intersection of $E_{a,\ell^{k}}(f_{X_{j},i})$, 
the proof follows using the Chebotarev Density Theorem for a product of Galois groups.
\end{proof}

\section{Proof of the main result}

Corollary \ref{Cor_SerreN} deals with the congruence classes of $N_{X}(p)$ modulo a fixed $m$ as $p$ varies among primes.
The point is now to use this idea to get information about 
$N_{X}(p)$ modulo $p$ as $p$ varies. 
In this section, we define a function, close to $N_{X}$, to which we can apply Theorem \ref{Serre}
and get results about $N_{X}(p)$ modulo $p$.
The following result is our main technical tool.

\begin{theo}\label{Th_KeyLemma}
Let $X$ be as in Theorem \ref{TheTheorem}.
There exists a finite set $\Sigma'_{X}$ containing $\Sigma_{X}$ (defined in section \ref{Sec_SerreTh}), such that for every $a\in \mathbf{Z}$,
there exists a function $M_{X,a}$ defined over $\mathcal{P}-\Sigma'_{X}$ satisfying:
\begin{enumerate}[label=(\roman*)]
\item\label{it_Frobfunc} $M_{X,a}$ is a $\mathbf{Z}$-linear combination of functions $f_{U,i}$ of type (\ref{def_frobennienneElem}) for some schemes $U$,
\item\label{it_equalmod} for all $p\in \mathcal{P}-\Sigma'_{X}$, one has $M_{X,a}(p) \equiv N_{X}(p) - a\ [\bmod\ p]$,
\item\label{it_bound} there exists explicit constants $b_{-}(X), b_{+}(X) \in \mathbf{Z}$ and $A=A(X,a)>0$ such that for all $p\in \mathcal{P}-\Sigma'_{X}$ and $p\geq A$
one has $b_{-}(X)p < M_{X,a}(p) < b_{+}(X)p$.
\end{enumerate}
\end{theo}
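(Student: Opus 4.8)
The plan is to reduce to the case of smooth projective varieties and there to use Poincaré duality and the Weil bounds in order to throw away, modulo $p$, all cohomology in degree larger than $2$.

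\emph{Reduction to the smooth projective case.} Everything depends only on $X_{0}=X\times_{\mathbf Z}\mathbf Q$, up to enlarging a finite set of primes. Using the scissor relations coming from the excision triangle for cohomology with compact support — $N_{V}(p)=N_{U}(p)+N_{Z}(p)$ for $V_{0}=U_{0}\sqcup Z_{0}$ with $U_{0}$ open and $Z_{0}$ closed, and $N_{V}(p)=N_{V'}(p)+N_{V''}(p)-N_{V'\cap V''}(p)$ for $V_{0}=V_{0}'\cup V_{0}''$, valid for $p$ outside a finite set — together with Hironaka's resolution of singularities, I will write $N_{X}$ as a $\mathbf Z$-linear combination of functions $N_{W^{(k)}}$, where each $W^{(k)}$ is a model over $\mathbf Z$ of a smooth projective $\mathbf Q$-variety of dimension $\le 3$, those of dimension $3$ moreover satisfying $b_{3}=0$. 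This is carried out by induction on $\dim X_{0}$: passing to a projective closure and then splitting into irreducible components (pairwise intersections and boundaries having strictly smaller dimension) reduces to $X_{0}$ integral projective; and then a resolution $\pi\colon Y_{0}\to X_{0}$ that is an isomorphism over $X_{0}\setminus\Sing(X_{0})$ gives $N_{X_{0}}=N_{Y_{0}}+N_{\Sing(X_{0})}-N_{\pi^{-1}(\Sing(X_{0}))}$, where $Y_{0}$ is smooth projective of dimension $\dim X_{0}$ and the last two terms are of strictly smaller dimension, to which the inductive hypothesis applies. When $\dim X_{0}=3$, one takes $Y_{0}$ with $b_{3}(Y_{0})=0$: the hypothesis supplies such a resolution of (a projective model of) $X$, and since a resolution is smooth its irreducible components are its connected components, hence disjoint, so $b_{3}$ is additive over them and the vanishing passes to the component lying over the relevant $3$-dimensional piece; in dimension $\le 2$ no such hypothesis is needed.

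\emph{The smooth projective case.} Let $W_{0}$ be smooth projective over $\mathbf Q$ of dimension $d\le 3$, with $b_{3}(W_{0})=0$ if $d=3$; write $H^{i}=H^{i}(W,\ell)$, which is ordinary cohomology as $W_{0}$ is proper. For good $p$ and $\ell\ne p$, $N_{W}(p)=\sum_{i=0}^{2d}(-1)^{i}\tr(\Frob_{p}\mid H^{i})$. The crucial point is that for $0\le j\le d-1$,
$$\tr(\Frob_{p}\mid H^{2d-j})=p^{d-j}\,\tr(\Frob_{p}\mid H^{j})\equiv 0\ [\bmod\ p].$$
Indeed, Poincaré duality gives $H^{2d-j}\cong(H^{j})^{\vee}(-d)$, so the eigenvalues of $\Frob_{p}$ on $H^{2d-j}$ are the $p^{d}\alpha^{-1}$ for $\alpha$ an eigenvalue on $H^{j}$; the Weil conjectures give $\alpha\overline{\alpha}=p^{j}$, hence $\sum_{\alpha}\alpha^{-1}=p^{-j}\sum_{\alpha}\overline{\alpha}=p^{-j}\tr(\Frob_{p}\mid H^{j})$ since that trace is a rational integer, and multiplying by $p^{d}$ proves the identity. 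Splitting the alternating sum at degree $d$, applying this for $0\le j\le d-1$, and using $H^{3}=0$ when $d=3$, one obtains in every case
$$N_{W}(p)\equiv\tr(\Frob_{p}\mid H^{0})-\tr(\Frob_{p}\mid H^{1})+\tr(\Frob_{p}\mid H^{2})\ [\bmod\ p];$$
by Deligne's bounds, the right-hand side $P_{W}(p):=f_{W,0}(p)-f_{W,1}(p)+f_{W,2}(p)$ satisfies $|P_{W}(p)|\le b_{0}(W)+b_{1}(W)\sqrt{p}+b_{2}(W)\,p$.

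\emph{Conclusion.} With $N_{X}=\sum_{k}n_{k}N_{W^{(k)}}$, let $\Sigma'_{X}$ be the union of $\Sigma_{X}$, the finitely many primes inverted in the reduction and in the choice of $\mathbf Z$-models, and the bad-reduction loci of the $W^{(k)}$; and set $M_{X,a}(p):=\sum_{k}n_{k}P_{W^{(k)}}(p)-a\,f_{\Spec\mathbf Z,0}(p)$ (where $f_{\Spec\mathbf Z,0}\equiv1$). Then $M_{X,a}$ is a $\mathbf Z$-linear combination of functions of type (\ref{def_frobennienneElem}), which is \ref{it_Frobfunc}; one has $M_{X,a}(p)\equiv\sum_{k}n_{k}N_{W^{(k)}}(p)-a=N_{X}(p)-a\ [\bmod\ p]$, which is \ref{it_equalmod}; and $|M_{X,a}(p)|\le|a|+\sum_{k}|n_{k}|\bigl(b_{0}(W^{(k)})+b_{1}(W^{(k)})\sqrt{p}\bigr)+\bigl(\sum_{k}|n_{k}|\,b_{2}(W^{(k)})\bigr)p$, so with $b_{\pm}(X):=\pm\bigl(1+\sum_{k}|n_{k}|\,b_{2}(W^{(k)})\bigr)$ and $A=A(X,a)$ chosen large enough that the lower-order terms are $\le p$ for $p\ge A$, one gets $b_{-}(X)p<M_{X,a}(p)<b_{+}(X)p$, which is \ref{it_bound}. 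The main obstacle is the reach of the displayed congruence: it removes everything above $H^{2}$ only because when $d\le 2$ there is no middle-degree term left to handle, whereas when $d=3$ the middle cohomology $H^{3}$ pairs with itself rather than with a lower-degree group and so cannot be rewritten as $p$ times a lower-weight trace — precisely what the assumption $b_{3}=0$ compensates for; a subsidiary point is making the single hypothesis on a resolution of $X$ propagate correctly through the dévissage, which the additivity of $b_{3}$ over the disjoint components of a resolution takes care of.
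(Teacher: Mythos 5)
Your proof is correct and follows essentially the same route as the paper: dévissage by resolution of singularities and excision down to smooth projective models, then Poincaré duality plus the Weil bounds to make all cohomology above degree $2$ vanish modulo $p$, with the hypothesis $b_{3}=0$ disposing of the middle degree in dimension $3$, and Deligne's bounds giving the linear-in-$p$ estimate. The only cosmetic difference is that the paper handles the one-dimensional pieces directly via Lang--Weil (taking $M_{X}=N_{X}$ there) instead of resolving them, and packages the dévissage as a single application of Hironaka (smooth locus $U\cong V\subset Y$) plus induction on dimension.
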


We give more precision about the set $\Sigma'_{X}$ (which we use also in Theorem \ref{TheTheorem})
in section \ref{Sec_proofKeyLemma}.
In the case $\dim(X)\geq 2$, it can be larger than $\Sigma_{X}$,
it depends on a choice of a projective resolution of $X$ (see Proposition \ref{Cor_dim23}).   
We postpone the proof of Theorem \ref{Th_KeyLemma} to section \ref{Sec_proofKeyLemma}.

\subsection{Proof of Theorem \ref{TheTheorem}}\label{sec_proof}
 
We now show how to deduce Theorem \ref{TheTheorem} by combining Theorem \ref{Serre} and Theorem \ref{Th_KeyLemma}.

\begin{proof}[Proof of Theorem \ref{TheTheorem}]
Let $k\in \mathbf{Z}$. Using Theorem \ref{Th_KeyLemma} for $i\in\lbrace 1,\ldots,n\rbrace$ yields a function $M_{X,a_{i}}$ and bounds $b_{\pm}(i)\in \mathbf{Z}$, $A_{i}>0$ such that for every $p>A_{i}$,
\begin{align*}
(b_{-}(i)+ k)p < M_{X,a_{i}}(p) + kf_{\mathbb{A}^{1}}(p) < (b_{+}(i) + k)p.
\end{align*}
Suppose as in Theorem \ref{TheTheorem} that there exists $p_{0}\notin\Sigma'_{X}$ such that for every $i$, one has $N_{X}(p_{0}) \neq a_{i}\  [\bmod\ p_{0}]$.
Let $m_{i} = M_{X,a_{i}}(p_{0})$.
By Theorem \ref{Th_KeyLemma}\ref{it_equalmod}, for every $k\in \mathbf{Z}$ one has, 
using the notation (\ref{def_EnsE}),
\begin{align*}
p_{0} \in \bigcap_{i=1}^{n} E_{m_{i}+kp_{0},0}(M_{X,a_{i}} + kf_{\mathbb{A}^{1}}).
\end{align*}
Hence the intersection is not empty. 
By Theorem \ref{Serre} (using Theorem \ref{Th_KeyLemma}\ref{it_Frobfunc}), 
we deduce that this set has positive density.
Let $A = \max_{i}(A_{i})$, $b_{-} = \min_{i}(b_{-}(i))$ and $b_{+} = \max_{i}(b_{+}(i))$.
Then for $k$ large enough, one has 
\begin{align*}
\bigcap_{i=1}^{n} E_{m_{i}+kp_{0},0}(M_{X,a_{i}} + kf_{\mathbb{A}^{1}})\cap  [A,\infty) \subset \bigcap_{i=1}^{n}\lbrace p \in\mathcal{P}, N_{X}(p)\not\equiv a_{i}\ [\bmod\ p]\rbrace.
\end{align*}
Indeed, since $p_{0}\geq 2$, one can choose $k\geq -b_{-}$ such that for every $i$ one has 
$m_{i}+kp_{0} \geq b_{+} + k$.
Then let $p \in E_{m_{i}+kp_{0},0}(M_{X,a_{i}} + kf_{\mathbb{A}^{1}})\cap  [A,\infty)$. 
One has 
\begin{align*}
0 < M_{X,a_{i}}(p) + kf_{\mathbb{A}^{1}}(p) < (b_{+} + k)p
\end{align*}
and
\begin{align*}
M_{X,a_{i}}(p) + kf_{\mathbb{A}^{1}}(p) \equiv 0\ [\bmod\ m_{i}+kp_{0}].
\end{align*}
Hence $p$ does not divide $M_{X,a_{i}}(p) + kf_{\mathbb{A}^{1}}(p)$.
By Theorem \ref{Th_KeyLemma}\ref{it_equalmod}, this concludes the proof.
\end{proof}

\subsection{Proof of Theorem \ref{Th_KeyLemma}}\label{Sec_proofKeyLemma}

We are now reduced to proving Theorem \ref{Th_KeyLemma}.
It suffices to deal with the case $a=0$, and note $M_{X,a}= M_{X,0} - af_{\bullet,0}$ if $a\neq 0$,
where $\bullet$ is the point.
The argument used depends on the dimension of the scheme and uses the result for the lower dimensions.
Therefore we give a proof for each dimension $d=1,2,3$, starting with $d=1$.

The $1$-dimensional case is a corollary of Lang--Weil's Theorem.
\begin{lem}\label{Lm_LWCurves}
Let $X$ be a $1$-dimensional scheme of finite type over $\mathbf{Z}$.
Let $b_{2}(X)$ be the number of $1$-dimensional irreducible components of $X/\mathbf{C}$.
There exists a constant $C(X)>0$ 
such that for every prime $p \notin \Sigma_{X}$, one has
\begin{equation}\label{LWineq}
0\leq N_{X}(p)\leq b_{2}(X)p + C(X)p^{\frac{1}{2}}.
\end{equation}
\end{lem}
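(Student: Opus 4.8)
The plan is to read the estimate directly off the Grothendieck--Lefschetz trace formula recalled in Section~\ref{Sec_SerreTh}: since $\dim X=1$ we have $H^{i}(X,\ell)=0$ for $i>2$, so for every $p\notin\Sigma_{X}$
\[
N_{X}(p)=\tr(\Frob_{p}\mid H^{0}(X,\ell))-\tr(\Frob_{p}\mid H^{1}(X,\ell))+\tr(\Frob_{p}\mid H^{2}(X,\ell)),
\]
and it suffices to bound the three Frobenius traces. The lower bound $N_{X}(p)\ge 0$ is trivial, since $N_{X}(p)=\vert X_{p}(\mathbf{F}_{p})\vert$ is a cardinality.

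First the two outer terms, which require no deep input. As cohomology is insensitive to nilpotents we may work with $X_{\mathrm{red}}$; over $\overline{\mathbf{Q}}$ one has a canonical decomposition $H^{2}_{c}(X\times\overline{\mathbf{Q}},\mathbf{Q}_{\ell})\simeq\bigoplus_{Z}\mathbf{Q}_{\ell}(-1)$, the sum running over the one-dimensional irreducible components $Z$ of $X\times\overline{\mathbf{Q}}$ (equivalently of $X/\mathbf{C}$), with $\Gamma_{\Sigma_{X,\ell}}$ permuting the summands and $\Frob_{p}$ acting by $p$ on each $\mathbf{Q}_{\ell}(-1)$; hence $\dim H^{2}(X,\ell)=b_{2}(X)$ and
\[
0\le\tr(\Frob_{p}\mid H^{2}(X,\ell))=p\cdot\#\{Z:\Frob_{p}(Z)=Z\}\le b_{2}(X)\,p.
\]
For $H^{0}$: the group $H^{0}_{c}$ is supported on the proper connected components of $X\times\overline{\mathbf{Q}}$, on which $\Frob_{p}$ acts by a permutation matrix, so its eigenvalues are roots of unity and $\vert\tr(\Frob_{p}\mid H^{0}(X,\ell))\vert\le\dim H^{0}(X,\ell)=:c_{0}$, a constant depending only on $X$.

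The one substantial ingredient is the bound on $H^{1}$, which is where Lang--Weil (in its cohomological incarnation) enters. Via the comparison isomorphism already used in Section~\ref{Sec_SerreTh} — for $p\notin\Sigma_{X}$ one has $H^{i}_{c}(X\times\overline{\mathbf{Q}},\mathbf{Q}_{\ell})\simeq H^{i}_{c}(X_{p}\times\overline{\mathbf{F}_{p}},\mathbf{Q}_{\ell})$ compatibly with $\Frob_{p}$ — the eigenvalues of $\Frob_{p}$ on $H^{1}(X,\ell)$ are mixed of weight $\le 1$ by Deligne's theorem (for a curve this already follows from Weil's Riemann hypothesis for curves), hence of absolute value $\le p^{1/2}$; therefore $\vert\tr(\Frob_{p}\mid H^{1}(X,\ell))\vert\le b_{1}(X)\,p^{1/2}$ with $b_{1}(X):=\dim H^{1}(X,\ell)$. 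Combining the three bounds gives, for all $p\notin\Sigma_{X}$ (using $p\ge 2$ in the last step),
\[
0\le N_{X}(p)\le b_{2}(X)\,p+b_{1}(X)\,p^{1/2}+c_{0}\le b_{2}(X)\,p+\bigl(b_{1}(X)+c_{0}+1\bigr)p^{1/2},
\]
so one may take $C(X):=b_{1}(X)+c_{0}+1>0$; all the constants are Betti numbers of $X$, finite and independent of $p$, so the bound is uniform. The main (and essentially only) obstacle is the weight estimate on $H^{1}_{c}$ together with this uniformity, and both are furnished by the machinery of Section~\ref{Sec_SerreTh}. Equivalently — the ``Lang--Weil'' reading of the same argument — one decomposes $(X_{p})_{\mathrm{red}}$ into its irreducible components (whose number, dimensions and degrees are controlled for $p\notin\Sigma_{X}$), applies the estimate $\vert W(\mathbf{F}_{p})\vert=p+O(p^{1/2})$ to each one-dimensional geometric component and the trivial bound to the finitely many zero-dimensional ones, and sums.
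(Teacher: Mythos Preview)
Your argument is correct. The paper does not actually prove this lemma: it states it as ``a corollary of Lang--Weil's Theorem'' and moves on. You supply a full cohomological proof via the trace formula and Deligne's weight bounds on $H^{i}_{c}$ (weights $\le i$, so $|\alpha|\le p^{i/2}$), together with the explicit description of $H^{2}_{c}$ as $\bigoplus_{Z}\mathbf{Q}_{\ell}(-1)$ indexed by the one-dimensional geometric components; you then also sketch the equivalent ``classical'' Lang--Weil reading by decomposing $(X_{p})_{\mathrm{red}}$ into components. Either route is standard and yields the stated inequality with explicit $C(X)$; your version has the mild advantage of making the constant visible as $b_{1}(X)+c_{0}+1$, while the paper's bare citation leaves it implicit.
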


\begin{Rk}
By definition $b_{2}(X)$ is the second Betti number of $X$.
As we want inequality (\ref{LWineq}) to be true for every prime,
we cannot hope for a better estimate without assuming anything about the field of definition of the irreducible components of $X/\mathbf{C}$.
\end{Rk}

\begin{prop}\label{Cor_curves}
In case $X$ is of dimension $1$ over $\mathbf{Z}$, 
the function $M_{X} = N_{X}$ satisfies Theorem \ref{Th_KeyLemma}.
One can take $\Sigma'_{X}=\Sigma_{X}$, $b_{-}(X) =-1$ and $b_{+}(X)= b_{2}(X) +1$.
\end{prop}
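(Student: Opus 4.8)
The plan is to verify conditions \ref{it_Frobfunc}, \ref{it_equalmod} and \ref{it_bound} of Theorem \ref{Th_KeyLemma} directly for $M_X = N_X$ in the one-dimensional case, using the cohomological description of $N_X(p)$ recalled in section \ref{Sec_SerreTh} together with the Lang--Weil bound of Lemma \ref{Lm_LWCurves}.

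First I would check \ref{it_Frobfunc}: by the Grothendieck--Lefschetz trace formula, for $p\notin\Sigma_X$ one has $N_X(p)=\sum_{i=0}^{2}(-1)^i f_{X,i}(p)$, which is manifestly a $\mathbf{Z}$-linear combination of functions of type (\ref{def_frobennienneElem}) (here the relevant dimension is $d=1$, so only $H^0$, $H^1$, $H^2$ occur). Condition \ref{it_equalmod} is immediate and requires nothing modulo $p$: since $M_X = N_X$ we have $M_X(p) = N_X(p) \equiv N_X(p) - 0\ [\bmod\ p]$ for all $p\notin\Sigma_X$, so we may take $\Sigma'_X=\Sigma_X$. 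This is why the curve case is, as the text says, ``quite easy'': the whole difficulty in higher dimension is to replace $N_X$ by a genuinely different Frobenian function that still agrees with $N_X-a$ modulo $p$ but whose values are $O(p)$; for curves the Lang--Weil bound already gives $N_X$ itself a size that is linear in $p$, so no surgery on the cohomology is needed.

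The substance is condition \ref{it_bound}, and this is where Lemma \ref{Lm_LWCurves} enters. That lemma gives, for every $p\notin\Sigma_X$, the estimate $0\le N_X(p)\le b_2(X)p + C(X)p^{1/2}$. To turn this into strict inequalities $b_-(X)p < M_X(p) < b_+(X)p$ with $b_-(X)=-1$ and $b_+(X)=b_2(X)+1$, I would argue as follows. Since $N_X(p)\ge 0 > -p$ for every prime $p$, the lower bound $-p < M_X(p)$ holds for all $p\notin\Sigma_X$ (no constraint on $p$ needed). For the upper bound, from the lemma $M_X(p)\le b_2(X)p + C(X)p^{1/2}$, and $b_2(X)p + C(X)p^{1/2} < (b_2(X)+1)p$ is equivalent to $C(X)p^{1/2} < p$, i.e.\ to $p > C(X)^2$. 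Hence one takes $A = A(X,0) = C(X)^2 + 1$ (or any constant exceeding $C(X)^2$): for all $p\notin\Sigma_X$ with $p\ge A$ one gets $-p < M_X(p) < (b_2(X)+1)p$, as required.

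There is no serious obstacle here; the only thing to be a little careful about is that the inequalities in \ref{it_bound} are strict while Lemma \ref{Lm_LWCurves} has a non-strict lower bound $0\le N_X(p)$ — this is harmless since $0 > -p$ strictly — and that the constant $A$ must be taken large enough (in terms of the Lang--Weil constant $C(X)$) to absorb the $p^{1/2}$ error term into the gap between $b_2(X)p$ and $(b_2(X)+1)p$. One should also note in passing that the values of $M_X = N_X$ on $\Sigma_X$ are irrelevant, since $\Sigma'_X = \Sigma_X$ is excluded. This completes the verification of all three conditions for $a=0$; the case of general $a$ follows, as explained at the start of section \ref{Sec_proofKeyLemma}, by setting $M_{X,a} = M_X - a f_{\bullet,0}$, which shifts $b_\pm(X)$ by $-a$ and leaves the argument otherwise unchanged.
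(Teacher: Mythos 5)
Your proposal is correct and follows exactly the route the paper intends: the paper's proof of this proposition is the one-line remark that it follows from Lemma \ref{Lm_LWCurves}, and your verification of conditions \ref{it_Frobfunc}--\ref{it_bound} (trace formula for \ref{it_Frobfunc}, triviality of \ref{it_equalmod} since $M_X=N_X$, and $A>C(X)^2$ to absorb the $p^{1/2}$ term for \ref{it_bound}) is precisely the content being invoked. Only a cosmetic point: for general $a$ the paper keeps $b_\pm(X)$ independent of $a$ and lets the threshold $A=A(X,a)$ absorb the shift, rather than shifting $b_\pm$ by $-a$ as you suggest at the end.
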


\begin{proof}
This follows from Lemma \ref{Lm_LWCurves}. 
\end{proof}

\begin{Rk} In the case of curves, the argument is very close to the one given in \cite[Prop. 2.7.1]{Ogus}.
\end{Rk}

In case the dimension over $\mathbf{Z}$ is $2$ or $3$,
we begin with the easier case of a smooth projective variety.
Then by Hironaka's resolution of singularities we will deduce the general case.

The following lemma is an easy corollary of Poincaré Duality.
\begin{lem}\label{Poincare}
Let $p$ be a prime number and let $Y_{p}$ be  a smooth projective variety of dimension $d$ over $\mathbf{F}_{p}$.
 For all primes $\ell\neq p$ and  for all $i\in\lbrace d+1,\ldots 2d \rbrace$,
 one has
 $$\tr(\Frob_{p}\mid H_{c}^{i}(Y_{p}\times\overline{\mathbf{F}_{p}},\mathbf{Q}_{\ell}))= 
 p^{i-d}\tr(\Frob_{p}\mid H_{c}^{2d-i}(Y_{p}\times\overline{\mathbf{F}_{p}},\mathbf{Q}_{\ell})).$$
\end{lem}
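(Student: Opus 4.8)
\textbf{Proof proposal for Lemma \ref{Poincare}.}

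The plan is to pass through the middle dimension via Poincaré Duality for $\ell$-adic cohomology on the smooth projective variety $Y_p/\mathbf{F}_p$. Since $Y_p$ is proper, compactly supported cohomology agrees with ordinary cohomology, so I may freely write $H^i(Y_p\times\overline{\mathbf{F}_p},\mathbf{Q}_\ell)$ for $H^i_c(Y_p\times\overline{\mathbf{F}_p},\mathbf{Q}_\ell)$. Poincaré Duality (see e.g. \cite{Mil}) gives a perfect pairing
\begin{equation*}
H^i(Y_p\times\overline{\mathbf{F}_p},\mathbf{Q}_\ell)\times H^{2d-i}(Y_p\times\overline{\mathbf{F}_p},\mathbf{Q}_\ell)\longrightarrow H^{2d}(Y_p\times\overline{\mathbf{F}_p},\mathbf{Q}_\ell)\cong\mathbf{Q}_\ell(-d),
\end{equation*}
compatible with the action of $\Frob_p$. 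In terms of Galois modules this says $H^i\cong \bigl(H^{2d-i}\bigr)^{\vee}(-d)$ as $\Frob_p$-modules, where $(-d)$ denotes a Tate twist by $\mathbf{Q}_\ell(-d)$, on which geometric Frobenius acts by multiplication by $p^{d}$.

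The key steps are then purely linear-algebraic. First I would record that for a finite-dimensional $\mathbf{Q}_\ell$-vector space $V$ with an automorphism $F$, one has $\tr(F\mid V^{\vee})=\tr(F^{-1}\mid V)$. Applying this together with the duality isomorphism above, and writing $j=2d-i$ so that $i-d = d-j$, we get
\begin{equation*}
\tr(\Frob_p\mid H^i)=p^{d}\,\tr(\Frob_p^{-1}\mid H^{2d-i}).
\end{equation*}
Second, I would invoke the Weil Conjectures (Deligne): on $H^{2d-i}$, which is the $(2d-i)$-th cohomology group, all eigenvalues of $\Frob_p$ are algebraic numbers of absolute value $p^{(2d-i)/2}$ under every complex embedding; more precisely the collection of eigenvalues is stable under the substitution $\alpha\mapsto p^{2d-i}/\alpha$ coming from Poincaré Duality applied to $H^{2d-i}$ itself (pairing it with $H^i$). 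Hence $\tr(\Frob_p^{-1}\mid H^{2d-i})=p^{-(2d-i)}\tr(\Frob_p\mid H^{2d-i})$. Substituting and simplifying the exponent, $d-(2d-i)=i-d$, yields exactly
\begin{equation*}
\tr(\Frob_p\mid H^i_c(Y_p\times\overline{\mathbf{F}_p},\mathbf{Q}_\ell))=p^{i-d}\,\tr(\Frob_p\mid H^{2d-i}_c(Y_p\times\overline{\mathbf{F}_p},\mathbf{Q}_\ell)),
\end{equation*}
which is the claimed identity; the hypothesis $i\in\{d+1,\dots,2d\}$ simply guarantees $i-d\geq 1$ so that the right-hand side is genuinely divisible by $p$, which is what matters for later applications.

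The only real subtlety — hardly an obstacle — is bookkeeping the Tate twists consistently: one must be careful whether the functional equation for the eigenvalues is phrased via $\alpha\mapsto p^{w}/\alpha$ on the weight-$w$ piece or via the duality isomorphism, and not to double-count the twist. A clean way to avoid any sign or exponent error is to prove the two-variable symmetric statement $\tr(\Frob_p\mid H^i)\,\tr(\Frob_p\mid H^{2d-i})^{-1}$-free version by observing directly that the multiset of pairs of dual eigenvalues $(\alpha,p^{d}/\alpha)$ partitions $H^i\oplus H^{2d-i}$, so that $\sum_{\alpha\in H^i}\alpha = p^{d}\sum_{\beta\in H^{2d-i}}\beta^{-1}$ and then using the weight-$(2d-i)$ functional equation to turn $\beta^{-1}$ into $p^{-(2d-i)}\beta'$ with $\beta'$ ranging again over the eigenvalues on $H^{2d-i}$. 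This makes the computation transparent and the statement falls out immediately.
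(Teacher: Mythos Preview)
Your proof is correct and follows essentially the same route as the paper's: Poincar\'e duality to relate eigenvalues on $H^i$ and $H^{2d-i}$, then Deligne's purity to convert inverses of eigenvalues into complex conjugates, and finally closure of the eigenvalue multiset under conjugation to finish. One small slip: the stability of the eigenvalues on $H^{2d-i}$ under $\alpha\mapsto p^{2d-i}/\alpha$ does \emph{not} come from ``Poincar\'e Duality pairing $H^{2d-i}$ with $H^i$'' as you write (that pairing only tells you eigenvalues on $H^i$ are $p^d/\beta$ for $\beta$ on $H^{2d-i}$); it comes from purity together with the characteristic polynomial lying in $\mathbf{Q}[T]$, so that $p^{2d-i}/\alpha=\overline{\alpha}$ is again an eigenvalue --- exactly the complex-conjugation step the paper uses.
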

\begin{proof}
Using Deligne's Theorem (Weil's Conjectures),
one can write 
$$\tr(\Frob_{p}\mid H_{c}^{i}(Y_{p}\times\overline{\mathbf{F}_{p}},\mathbf{Q}_{\ell}))= \sum_{j=1}^{b_{i}}\alpha_{i,j}$$
with $\vert \alpha_{i,j}\vert = p^{\frac{i}{2}}$.
By Poincaré Duality $b_{i}=b_{2d-i}$ and (up to reordering) 
$\alpha_{i,j}=\frac{p^{d}}{\alpha_{2d-i,j}}$ for all $j$. 
Hence
\begin{eqnarray*}
 \sum_{j=1}^{b_{i}}\alpha_{i,j} &=& \sum_{j=1}^{b_{i}}\frac{p^{d}}{\alpha_{2d-i,j}} 
 = p^{\frac{i}{2}}\sum_{j=1}^{b_{i}}\frac{p^{d-\frac{i}{2}}}{\alpha_{2d-i,j}} \\
 &=& p^{\frac{i}{2}}\sum_{j=1}^{b_{i}}\sigma\left(\frac{\alpha_{2d-i,j}}{p^{d-\frac{i}{2}}}\right) \\
 &=& p^{i-d}\tr(\Frob_{p}\mid H_{c}^{2d-i}(Y_{p}\times\overline{\mathbf{F}_{p}},\mathbf{Q}_{\ell}))
\end{eqnarray*}
where $z\mapsto\sigma(z)$ denotes complex conjugation.
\end{proof}

We deduce Theorem \ref{Th_KeyLemma} for smooth projective schemes.

\begin{prop}\label{Cor_smoothProj}
Let $X$ be a scheme of finite type over $\mathbf{Z}$,
suppose $X/\mathbf{C}$ is either a smooth projective surface, or a smooth projective threefold satisfying $b_{3}(X)=0$.
Then the function $M_{X} = \sum_{i = 0}^{2}(-1)^{i}f_{X,i}$ satisfies Theorem \ref{Th_KeyLemma}.
One can take $\Sigma'_{X}=\Sigma_{X}$, $b_{-}(X) = -b_{2}(X) - 1$ and $b_{+}(X)= b_{2}(X) +1$.
\end{prop}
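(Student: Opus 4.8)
The plan is to reduce, via the Grothendieck--Lefschetz trace formula, to controlling the contribution of the cohomology groups $H^i(X,\ell)$ for $i>d$, which by Poincaré Duality are governed by the low-degree groups. Concretely, since $X/\mathbf{C}$ is smooth and projective of dimension $d$, for all $p\notin\Sigma_X$ we have $N_X(p)=\sum_{i=0}^{2d}(-1)^i f_{X,i}(p)$, and Lemma \ref{Poincare} lets us rewrite, for $d<i\leq 2d$,
\[
f_{X,i}(p)=p^{i-d}f_{X,2d-i}(p).
\]
So the idea is to define $M_X=\sum_{i=0}^{2}(-1)^i f_{X,i}$ and to verify the three conclusions of Theorem \ref{Th_KeyLemma}.

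First, \ref{it_Frobfunc} is immediate: $M_X$ is by construction a $\mathbf{Z}$-linear combination of the functions $f_{X,i}$ of type \eqref{def_frobennienneElem}. For \ref{it_equalmod}, I would compute
\[
N_X(p)-M_X(p)=\sum_{i=3}^{2d}(-1)^i f_{X,i}(p)-(-1)^3 f_{X,3}(p)\cdot[\,3\leq 2d\,],
\]
more carefully: for $d=2$ the "extra" terms beyond $i=0,1,2$ are $i=3,4$, and Lemma \ref{Poincare} gives $f_{X,3}(p)=p\,f_{X,1}(p)$ and $f_{X,4}(p)=p^2 f_{X,0}(p)$, all divisible by $p$; for $d=3$ the extra terms are $i=3,4,5,6$, with $f_{X,3}(p)=0$ since $b_3(X)=0$, while $f_{X,4}(p)=p\,f_{X,2}(p)$, $f_{X,5}(p)=p^2 f_{X,1}(p)$, $f_{X,6}(p)=p^3 f_{X,0}(p)$ are each divisible by $p$. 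Hence $N_X(p)\equiv M_X(p)\ [\bmod\ p]$, which is \ref{it_equalmod} with $a=0$.

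For \ref{it_bound} I would use the Weil bounds $|f_{X,i}(p)|\leq b_i(X)p^{i/2}$. Since $N_X(p)\geq 0$ and $N_X(p)-M_X(p)$ is a sum of terms each bounded in absolute value by a constant times $p$ (namely $b_1(X)p$, and for $d=3$ also terms divisible by $p^2,p^3$ — these I would instead absorb by writing $M_X(p)=N_X(p)-(\text{multiple of }p)$ directly), the cleanest route is: $M_X(p)=f_{X,0}(p)-f_{X,1}(p)+f_{X,2}(p)$, and $f_{X,0}(p)$ equals the number of geometrically connected components (here I would invoke properness/smoothness so that $b_0=b_{2d}$ counts components), $|f_{X,1}(p)|\leq b_1(X)\sqrt p$, and $0\leq f_{X,2}(p)\leq b_2(X)p+O(\sqrt p)$ by the analogue of Lemma \ref{Lm_LWCurves} for the top-weight part — or more simply by the Weil bound $|f_{X,2}(p)|\leq b_2(X)p$. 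This yields $|M_X(p)|\leq b_2(X)p + O(\sqrt p)$, so for $p$ large enough one has $-(b_2(X)+1)p<M_X(p)<(b_2(X)+1)p$, giving $b_-(X)=-b_2(X)-1$ and $b_+(X)=b_2(X)+1$. One takes $\Sigma'_X=\Sigma_X$ since no resolution is needed in the smooth projective case.

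The main obstacle is organizational rather than deep: one must be careful that the "extra" cohomology contributions (degrees $i>d$) are genuinely eliminated by Lemma \ref{Poincare}, and in particular that the hypothesis $b_3(X)=0$ in the threefold case kills the sole self-dual middle term $f_{X,3}$ which Poincaré Duality does \emph{not} express in terms of a strictly-lower-degree group. A secondary point of care is that the bounds must hold for \emph{every} $p\notin\Sigma'_X$ with $p\geq A$, so one should not appeal to Lang--Weil-type asymptotics but rather to the uniform Weil bounds for smooth projective varieties, which hold with implied constants depending only on the Betti numbers of $X/\mathbf{C}$.
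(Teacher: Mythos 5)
Your proposal is correct and follows essentially the same route as the paper: apply the trace formula, use Lemma \ref{Poincare} together with integrality of the traces to see that the terms in degrees $i>d$ are multiples of $p$ (with $b_{3}(X)=0$ disposing of the self-dual middle term when $d=3$), and then bound $M_{X}=f_{X,0}-f_{X,1}+f_{X,2}$ via the Weil bounds to get $b_{\pm}(X)=\pm(b_{2}(X)+1)$ for $p$ large. Your write-up simply makes explicit the details the paper leaves implicit.
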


\begin{proof}
Use the fact that 
$\tr(\Frob_{p}\mid H_{c}^{i}(Y_{p}\times\overline{\mathbf{F}_{p}},\mathbf{Q}_{\ell}))\in \mathbf{Z}$
for every $i$, 
and Lemma \ref{Poincare}.
\end{proof}

For the proof of the general case, we need a corollary to Hironaka's resolution of singularities (\textit{e.g.} \cite[Th.3.36]{Kol}).

\begin{lem}\label{Hironaka}
 Let $X$ be a scheme of finite type over $\mathbf{Z}$ 
 then there exists a smooth open dense subscheme $U_{0}$ of $X_{0}$ which is
 $\mathbf{Q}$-isomorphic to an open dense subscheme $V_{0}$ of a smooth projective variety $Y_{0}$ defined over $\mathbf{Q}$.
 
 In particular there exist $D\in\mathbf{Z_{\geq1}}$
and schemes $U,V,Y\rightarrow \Spec \mathbf{Z}[1/D]$ whose generic fibers are $U_{0},V_{0},Y_{0}$ respectively such that
for every prime $p\nmid D$,
the subscheme $U_{p}$ is $\mathbf{F}_{p}$-isomorphic to $V_{p}$ and $Y_{p}$ is smooth projective over $\mathbf{F}_{p}$.
\end{lem}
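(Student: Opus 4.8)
The plan is to prove the assertion first over the generic point, producing $\mathbf{Q}$-schemes $U_0 \subseteq X_0$ and $V_0 \subseteq Y_0$ with $Y_0/\mathbf{Q}$ smooth projective and $U_0 \cong V_0$, and then to obtain the integral statement by a routine spreading-out argument. The only substantial input is Hironaka's resolution of singularities in characteristic zero; everything else is the standard limit formalism for morphisms of finite presentation.

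For the generic fibre: since point counts ignore nilpotents and the conclusion only involves $U_0$, I would first replace $X$ by $X_{\mathrm{red}}$, so that $X_0$ is reduced. As $\mathbf{Q}$ is perfect, the smooth locus $X_0^{\mathrm{sm}}$ then contains every generic point of $X_0$ and hence is a dense open subscheme; after discarding the pairwise intersections of the irreducible components (which makes these components disjoint) and shrinking to an affine open inside each, one obtains a dense \emph{affine} open $U_0 \subseteq X_0$ that is smooth over $\mathbf{Q}$. Writing $U_0$ as a closed subscheme of some $\mathbb{A}^N_{\mathbf{Q}} \subseteq \mathbb{P}^N_{\mathbf{Q}}$ and taking its scheme-theoretic closure gives a projective reduced $\mathbf{Q}$-scheme $\overline{U_0}$ in which $U_0$ is a dense open. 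Applying Hironaka's theorem \cite{Kol} to $\overline{U_0}$ yields a projective birational morphism $\pi\colon Y_0 \to \overline{U_0}$ with $Y_0$ smooth over $\mathbf{Q}$ which is an isomorphism over the smooth locus of $\overline{U_0}$, in particular over $U_0$. Thus $V_0 := \pi^{-1}(U_0)$ is an open subscheme of $Y_0$ mapping isomorphically onto $U_0$, and it is dense in $Y_0$ since $\pi$ is birational and $U_0$ meets every irreducible component of $\overline{U_0}$. This gives the first assertion; that $Y_0$ may be reducible or of mixed dimension is harmless, and alternatively one may run the argument one irreducible component at a time and take a disjoint union.

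It then remains to spread the diagram $X_0 \supseteq U_0 \xrightarrow{\sim} V_0 \subseteq Y_0$ over $\Spec\mathbf{Z}$. By the standard spreading-out formalism there is $D \geq 1$ and an open subscheme $U \subseteq X \times_{\mathbf{Z}} \mathbf{Z}[1/D]$ with generic fibre $U_0$; embedding $Y_0$ in some $\mathbb{P}^M_{\mathbf{Q}}$ and taking the closure in $\mathbb{P}^M_{\mathbf{Z}[1/D]}$ produces a projective morphism $Y \to \Spec\mathbf{Z}[1/D]$ with generic fibre $Y_0$, whose non-smooth locus is closed, disjoint from $Y_0$, and has closed image in $\Spec\mathbf{Z}[1/D]$ not containing the generic point --- so after enlarging $D$ we may assume $Y \to \Spec\mathbf{Z}[1/D]$ smooth, hence all its fibres smooth projective. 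Similarly $V_0$ spreads to an open $V \subseteq Y$, and, after inverting finitely many more primes, the isomorphism $U_0 \xrightarrow{\sim} V_0$ spreads to an isomorphism $U \xrightarrow{\sim} V$ over the resulting ring $\mathbf{Z}[1/D]$. For every prime $p \nmid D$ the fibre $U_p$ is then an open of $X_p$, it is $\mathbf{F}_p$-isomorphic to the open $V_p$ of $Y_p$, and $Y_p$ is smooth projective over $\mathbf{F}_p$, which is exactly the claim.

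I do not expect any single deep step: the genuine ingredient is Hironaka's theorem, used as a black box --- where one should be careful to invoke the refined form giving a \emph{projective} resolution that restricts to an isomorphism over the smooth locus. The part demanding real care is the bookkeeping in the last step: one enlarges the set of inverted primes several times, and must check that the open immersion $U \subseteq X[1/D]$, the smoothness and projectivity of $Y \to \Spec\mathbf{Z}[1/D]$, the open immersion $V \subseteq Y$, and the isomorphism $U \cong V$ can all be achieved over one and the same ring $\mathbf{Z}[1/D]$ --- together with the (harmless but worth stating) reduction to $X$ reduced, which is what makes $X_0^{\mathrm{sm}}$ dense in the first place.
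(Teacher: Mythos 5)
Your proof is correct and follows essentially the same route as the paper, which states the lemma as a direct corollary of Hironaka's theorem (applied to a projective compactification of the smooth locus of $X_{0}$, using that the resolution is an isomorphism over the smooth locus) followed by a standard spreading-out argument over $\mathbf{Z}[1/D]$ --- exactly the two steps you carry out, including the necessary reduction to $X$ reduced that the paper leaves implicit. The only (harmless) divergence is that you shrink $U_{0}$ to a dense affine open in order to compactify, whereas the paper's remark after the lemma takes $U_{0}$ to be the whole smooth locus (so that $U=X$ when $X/\mathbf{C}$ is smooth); your argument recovers that case too whenever $X_{0}$ is quasi-projective, which holds in all of the paper's applications.
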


\begin{Rk}
In Hironaka's Theorem, the subscheme $U_{0}$ is the smooth locus of $X_{0}$.
In particular if $X/\mathbf{C}$ is smooth, one can take $U=X$. 
\end{Rk}

As $U$ is an open dense subscheme of $X$, 
one has $\dim(X-U) \leq \dim(X) -1$. 
Similarly, $\dim(Y-V) \leq \dim(X) -1$.
Using these observations we conclude the proof of Theorem \ref{Th_KeyLemma} via an induction argument.

\begin{prop}\label{Cor_dim23}
Let $X$ be a scheme of finite type over $\mathbf{Z}$, 
and $U,V,Y$ and $D$ given by Lemma \ref{Hironaka}.
Suppose $Y/\mathbf{C}$ is either a smooth projective surface, 
or a smooth projective threefold satisfying $b_{3}(Y)=0$.
Then the function $M_{X} = M_{Y} - M_{Y-V} + M_{X-U}$ satisfies Theorem \ref{Th_KeyLemma},
for the set $\Sigma'_{X} = \Sigma_{X}\cup\Sigma'_{Y-V} \cup \Sigma'_{X-U} \cup\lbrace p \nmid D\rbrace$.
One can take $b_{-}(X) = -b_{2}(Y) - b_{+}(Y-V) + b_{-}(X-U) + 1 $ and $b_{+}(X)= b_{2}(Y) - b_{-}(Y-V) + b_{+}(X-U) - 1$.
\end{prop}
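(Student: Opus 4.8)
The plan is to deduce Proposition \ref{Cor_dim23} from the previously established cases (Propositions \ref{Cor_curves} and \ref{Cor_smoothProj}) together with an excision argument on compactly supported cohomology. The starting observation is the long exact sequence associated to the open immersion $U \hookrightarrow X$ with closed complement $Z = X - U$: for every prime $\ell$ and every $p \nmid \ell$,
\begin{align*}
\cdots \rightarrow H^{i}_{c}(U \times \overline{\mathbf{Q}}, \mathbf{Q}_{\ell}) \rightarrow H^{i}_{c}(X \times \overline{\mathbf{Q}}, \mathbf{Q}_{\ell}) \rightarrow H^{i}_{c}(Z \times \overline{\mathbf{Q}}, \mathbf{Q}_{\ell}) \rightarrow \cdots
\end{align*}
is Galois-equivariant, which yields $N_{X}(p) = N_{U}(p) + N_{X-U}(p)$ for $p \notin \Sigma'_{X}$ (the same identity also follows directly from counting $\mathbf{F}_p$-points, but one needs the cohomological version to stay within the class of functions of type (\ref{def_frobennienneElem})). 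By Lemma \ref{Hironaka}, for $p \nmid D$ the scheme $U_p$ is $\mathbf{F}_p$-isomorphic to $V_p$, hence $N_{U}(p) = N_{V}(p) = N_{Y}(p) - N_{Y-V}(p)$, again for $p$ outside the bad set. Combining, $N_{X}(p) = N_{Y}(p) - N_{Y-V}(p) + N_{X-U}(p)$ for every $p \notin \Sigma'_{X}$. This motivates the definition $M_{X} = M_{Y} - M_{Y-V} + M_{X-U}$.

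Next I would verify the three conditions of Theorem \ref{Th_KeyLemma} in turn. Condition \ref{it_Frobfunc} is immediate: $M_Y$ satisfies it by Proposition \ref{Cor_smoothProj}, and $M_{Y-V}$, $M_{X-U}$ satisfy it by induction on dimension, since $\dim(Y-V) \leq \dim(X) - 1$ and $\dim(X-U) \leq \dim(X) - 1$, so in particular they are surfaces or curves when $X$ is a threefold and curves (or lower) when $X$ is a surface — and a $\mathbf{Z}$-linear combination of such functions is again of the required form. Condition \ref{it_equalmod}: by the induction hypothesis applied to $Y-V$ and $X-U$ (with $a=0$), and by Proposition \ref{Cor_smoothProj} applied to $Y$, one has $M_{Y}(p) \equiv N_{Y}(p)$, $M_{Y-V}(p) \equiv N_{Y-V}(p)$ and $M_{X-U}(p) \equiv N_{X-U}(p)$ modulo $p$ for all $p$ outside the respective bad sets; adding these congruences with signs and using the point-count identity above gives $M_{X}(p) \equiv N_{X}(p) \pmod p$ on $\mathcal{P} - \Sigma'_{X}$. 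One must be a little careful that $\Sigma'_{X}$ as defined in the statement really contains all the excluded primes: it includes $\Sigma_X$, the bad sets $\Sigma'_{Y-V}$ and $\Sigma'_{X-U}$ coming from the inductive hypotheses, and $\lbrace p \mid D \rbrace$ to make the isomorphism $U_p \cong V_p$ and the smooth projectivity of $Y_p$ valid; one should note that bad primes for $Y$ itself are already among these (they divide $D$ or lie in $\Sigma_X$ — this point deserves a sentence to nail down).

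For condition \ref{it_bound}, I would simply add the bounds. Let $A = \max$ of the constants $A(Y,0)$, $A(Y-V,0)$, $A(X-U,0)$ furnished by the lower-dimensional cases (with $A(Y,0)$ trivially $1$ as $M_Y$ is a fixed $\mathbf{Z}$-linear combination of $f_{Y,i}$, $i \leq 2$). For $p \geq A$ one has
\begin{align*}
(-b_{2}(Y) - 1)p &< M_{Y}(p) < (b_{2}(Y)+1)p, \\
b_{-}(Y-V)p &< M_{Y-V}(p) < b_{+}(Y-V)p, \\
b_{-}(X-U)p &< M_{X-U}(p) < b_{+}(X-U)p.
\end{align*}
Subtracting the middle inequalities and adding the others gives
\begin{align*}
\bigl(-b_{2}(Y) - 1 - b_{+}(Y-V) + b_{-}(X-U)\bigr)p < M_{X}(p) < \bigl(b_{2}(Y) + 1 - b_{-}(Y-V) + b_{+}(X-U)\bigr)p,
\end{align*}
which is exactly the claimed $b_{\pm}(X)$. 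The hypothesis that $Y$ is a smooth projective surface or a threefold with $b_3(Y) = 0$ is used precisely to invoke Proposition \ref{Cor_smoothProj} for the leading term $M_Y$; the complements $Y-V$ and $X-U$ carry no such hypothesis because they have strictly smaller dimension and are handled by the already-treated cases. The only genuinely delicate point is bookkeeping the bad sets and checking that the excision long exact sequence is Galois-equivariant and degenerates to the claimed additivity of $N_X$; everything else is a routine combination of inequalities and congruences. I would therefore spend most of the write-up carefully stating the excision sequence and the induction base, and dispatch the arithmetic of the bounds quickly.
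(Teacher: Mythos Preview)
Your proof is correct and follows essentially the same approach as the paper: use Lemma~\ref{Hironaka} to get $N_U(p)=N_V(p)$, hence $N_X(p)=N_Y(p)-N_{Y-V}(p)+N_{X-U}(p)$, then invoke Proposition~\ref{Cor_smoothProj} for $Y$ and the lower-dimensional cases (Proposition~\ref{Cor_curves} or an inductive appeal to Proposition~\ref{Cor_dim23}) for $X-U$ and $Y-V$. The paper's own proof is in fact terser than yours --- it does not spell out the long exact sequence or check conditions \ref{it_Frobfunc}--\ref{it_bound} separately --- so your write-up is, if anything, more complete.

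One small slip: your final displayed bounds are
\[
-b_{2}(Y) - 1 - b_{+}(Y-V) + b_{-}(X-U) \quad\text{and}\quad b_{2}(Y) + 1 - b_{-}(Y-V) + b_{+}(X-U),
\]
which differ by $\mp 2$ from the constants stated in the proposition; so your claim that this is ``exactly the claimed $b_{\pm}(X)$'' is not literally true. Your bounds are the ones that fall out of the naive addition and are perfectly valid for Theorem~\ref{Th_KeyLemma}\ref{it_bound}; the discrepancy appears to be a typo in the statement (the signs on the $\pm 1$ are reversed), not an error in your argument.
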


\begin{proof}
By Lemma \ref{Hironaka}, the scheme $U$ is $\mathbf{Z}[1/D]$-isomorphic to $V$. 
Define (by induction on the dimension) $\Sigma'_{X}=\Sigma_{X}\cup\Sigma'_{Y-V} \cup \Sigma'_{X-U} \cup\lbrace p\mid D\rbrace$.  
For every prime $p\notin \Sigma'_{X}$, one has
$$N_{U}(p)=N_{V}(p),$$
i.e.
$$N_{X}(p)-N_{Y}(p)= N_{X-U}(p)- N_{Y-V}(p).$$
We invoke Proposition \ref{Cor_smoothProj} for the scheme $Y$.
In case $\dim(X)=2$, we invoke Proposition \ref{Cor_curves} for the curves $X-U$ and $Y-V$.
In case $\dim(X)=3$, we use an induction argument: we invoke Proposition \ref{Cor_dim23} for the surfaces $X-U$ and $Y-V$.
Hence the function $M_{X} := M_{Y} - M_{Y-V} + M_{X-U}$ satisfies Theorem \ref{Th_KeyLemma}.
\end{proof}

\subsection{The condition on the third Betti number of threefolds is not empty}\label{Sec_ex3fold}

As we see in the proof of Theorem \ref{Th_KeyLemma}, 
the condition $b_{3}(Y)=0$ is needed in the case of threefolds.
We present now examples of threefolds to which we would like to apply Theorem \ref{TheTheorem}.

\begin{ex}[Hypersurfaces]
The first example one could think of is the case of a hypersurface.
Let $Y\subseteq\mathbb{P}^{4}$ be a smooth projective hypersurface 
defined over $\mathbf{Z}$ by an equation of degree $d$. 
From \cite[Chap. 5 \S 3]{Dimca}  we get
$b_{3}(Y)= \frac{(d-1)^{5}+1}{d} -1$
which is zero if $d = 1$ or $2$, and is positive as soon as $d>2$.
Hence we cannot apply Theorem \ref{TheTheorem} to non-rational hypersurfaces. 
\end{ex}

We present now an example of a family of schemes better suited for the application of Theorem \ref{TheTheorem}.
In turn we obtain a new example of a variety with non-zero $A$-number.

Let $S$ be a projective surface defined over $\mathbf{Z}$, smooth over $\mathbf{C}$ with $b_{1}(S)=0$
(\textit{e.g.} $S$ is a $K3$-surface).
We build a smooth scheme $Y$ with a morphism  $g:Y\rightarrow S$ 
such that for all $s\in S$, the fiber $Y_{s}$ is isomorphic to $\mathbb{P}^{1}$.
Then the Leray spectral sequence \cite[App. B]{Mil} for $g:Y \rightarrow S$ is
$E^{i,j}_{2}:=H^{i}(S,R^{j}g_{*}\mathbf{C})\Rightarrow H^{i+j}(Y,\mathbf{C})$.
As $E^{i,3-i}_{2}=0$ for each $i \in \lbrace 0,1,2,3\rbrace$
we get $H^{3}(Y,\mathbf{C})=0$.

\begin{Rk}
A first example of a scheme equipped with such a fibration is $S\times\mathbb{P}^{1}$ 
but probably counting the number of $\mathbf{F}_{p}$-points on $S\times\mathbb{P}^{1}$ 
is not that interesting.
\end{Rk}

These schemes are exactly the Severi-Brauer schemes over $S$ of relative order $2$. 
In our situation ($S$ a projective smooth surface over $\mathbf{C}$), \cite[Part 8]{GroGB1} ensures that
the Severi-Brauer schemes over $S$ of relative order $2$ are classified by the 
$2$-torsion subgroup of the Brauer group of $S$, noted $Br(S)[2]$.
This group can be seen as a subset of the set of quaternion algebras over the function field of $S$.
If $Br(S)[2]$ is non-trivial (it is the case when $S$ is a $K3$-surface), 
a non trivial element of $Br(S)[2]$ yields equations for a Severi-Brauer scheme not of type $S\times\mathbb{P}^{1}$.

Precisely, let $S$ be a $K3$-surface in the weighted projective space $\mathbb{P}(1,1,1,3)$ given by an equation  $f(x,y,z)=w^{2}$ where $f$ is a homogeneous polynomial of degree $6$. 
A non-trivial element of $Br(S)[2]$ can be given on an open affine subscheme $O$ of $S$ as a pair $(a,b)$ 
where $a$ and $b$ are rational functions in the variable $s=(x,y,z,w)\in O$.
Define the scheme $\overline{U}(a,b)$ in $O\times\mathbb{P}^{2}$ by the equations:
$$ \overline{U}(a,b): a(s)u^{2}+b(s)v^{2} = t^{2}.$$
Then $\overline{U}(a,b)$ is birational to a Severi-Brauer scheme over $S$ of relative order $2$.
In particular it admits a smooth projective completion $Y(a,b)$ satisfying $b_{3}(Y(a,b))=0$.
Writing $a=\alpha/d$, $b=\beta/d$  with $\alpha,\beta, d$ polynomials, we can define a larger scheme $\overline{X}(a,b)$ in $\mathbb{P}(1,1,1,3)\times\mathbb{P}^{2}$ by the equations:
$$ \overline{X}(a,b): 
\left \{
\begin{array}{c @{=} c}
    f(x,y,z) & w^2 \\
    \alpha(x,y,z)u^{2}+\beta(x,y,z)v^{2} & d(x,y,z)t^{2}\\
\end{array}
\right.  $$
in the variables $[x:y:z: w]\in\mathbb{P}(1,1,1,3)$, $[t:u:v]\in\mathbb{P}^{2}$. 
Then $\overline{U}(a,b)$ is an open dense subscheme of $\overline{X}(a,b)$
hence $\overline{X}(a,b)$ is also birational to $Y(a,b)$.
As we want an affine scheme, we consider the intersection with some hyperplane:
the affine scheme in $\mathbb{A}^{5}$ given by the equations
$$ X(a,b): 
\left \{
\begin{array}{c @{=} c}
    f(x,y,1) & w^2 \\
    \alpha(x,y,1)u^{2}+\beta(x,y,1)v^{2} & d(x,y,1)\\
\end{array}
\right.  $$
admits $Y(a,b)$ as a smooth projective model.

 It is not always easy to describe a non-trivial element of the group $Br(S)[2]$, 
but there are some surfaces well studied in the literature.

\begin{ex}\label{Ex3fold}
In \cite{ABBVA} the authors study a $K3$-surface $S$ that we can define by an equation $w^{2}=f(x,y,z)$ where
\begin{eqnarray*}
&f(x,y,z)& = x^{6}+ 6x^{5}y+ 12x^{5}z + x^{4}y^{2} + 22x^{4}yz + 28x^{3}y^{3} 
- 38x^{3}y^{2}z + 46x^{3}yz^{2} + 4x^{3}z^{3}+ 24x^{2}y^{4}\\
& & - 4x^{2}y^{3}z - 37x^{2}y^{2}z^{2} - 36x^{2}yz^{3} - 4x^{2}z^{4}
+ 48xy^{4}z - 24xy^{3}z^{2} + 34xy^{2}z^{3} + 4xyz^{4} \\
& & + 20y^{5}z + 20y^{4}z^{2} - 8y^{3}z^{3}
- 11y^{2}z^{4} -4yz^{5}.
\end{eqnarray*}

Then \cite[Prop. 11]{ABBVA} gives a non-trivial element of $Br(S)[2]$
as a quaternion algebra of parameter $(a,b)$ with
$$a=x^{2}+ 14xy - 23y^{2} -8yz$$
and $$b=b_{1}b_{2}=(x -4y -z)(3x^{3} + 2x^{2}y - 4x^{2}z + 8xyz + 3xz^{2} - 16y^{3} -11y^{2}z - 8yz^{2} -z^{3}).$$
Let $X_{1}$ be the $3$-dimensional scheme in $\mathbb{A}^{5}$ given by the equations
$$X_{1}:
\left \{
\begin{array}{c @{=} c}
    f(x,y,1) & w^2 \\
    a(x,y,1)u^{2}+b(x,y,1)v^{2} & 1\\
\end{array}
\right. . $$
\end{ex}

We can apply Theorem \ref{TheTheorem} to the affine scheme $X_{1}$.
The primes of bad reduction for the surface $S$ are given in \cite[Rk. 12]{ABBVA}.
Building $X_{1}$ does not give rise to more primes of bad reduction. 
Hence $X_{1}$ has good reduction at the prime $7$.
We compute 
\begin{align*}
N_{X_{1}}(7) = 584 \neq 0  \ [\bmod\ 7],
\end{align*} 
and we deduce $\dens_{\inf}\lbrace p \in\mathcal{P}, p\nmid N_{X}(p)\rbrace>0$.

The threefold $X_{1}$ is smooth over $\mathbf{C}$. 
Hence Corollary \ref{ANumb} holds for $X_{1}$.
In particular $X_{1}$ has non-zero $A$-number.

Let $D$ be the product of the elements of $\Sigma_{X_{1}}$.
Let $f$ be a function on $X_{1}$, there exists a constant $C$, 
and a closed subscheme $X_{2}\subset\mathbb{A}^{5}_{\mathbf{Z}[1/D]}$ 
of relative dimension at most $3$
such that for every $p\notin \Sigma_{X_{1}}$, for every $\alpha\geq1$, for every non-trivial additive character $\psi$ of $\mathbf{F}_ {p^{\alpha}}$
and for every $h \in (\mathbb{A}^{5}_{\mathbf{Z}[1/D]}-X_{2})(\mathbf{F}_{p^{\alpha}})$
one has
$$\left\lvert \sum_{x\in X_{1}(\mathbf{F}_{p^{\alpha}})}\psi(f(x)+\sum_{i}h_{i}x_{i})\right\rvert \leq Cp^{\frac{3\alpha}{2}}.$$

\section{Value at 1 of Frobenian functions}\label{Sec_Value_1}

Part of Theorem \ref{Serre} also deals with the value at $1$ of $\mathbf{Z}$-linear combinations of functions of type (\ref{def_frobennienneElem}). 
We can use the value at $1$ (see (\ref{form_value1})) of the function $M_{X}$ to ensure that the set $E_{a,m}(M_{X})$ (defined by (\ref{def_EnsE})) is not empty.
We deduce the following variation of Theorem \ref{TheTheorem}.
\begin{theo}\label{Th_value1}
Let X be as in Theorem \ref{TheTheorem}, 
and let $a_{1}, \ldots, a_{n} \in \mathbf{Z}$.
Denote by $M_{X,a_{i}}$ and $b_{\pm}(X,a_{i})$ the functions and bounds obtained by applying Theorem \ref{Th_KeyLemma}, respectively.
If for every $i$ one has
\begin{align}\label{cond_MaxValue1}
\max\left( \lvert M_{X,a_{i}}(1) - b_{-}(X,a_{i}) \rvert , \lvert M_{X,a_{i}}(1) - b_{+}(X,a_{i}) \rvert \right) \geq b_{+}(X,a_{i}) - b_{-}(X,a_{i})
\end{align}
then
$$\dens_{\inf}\bigcap_{i=1}^{n}\lbrace p \in\mathcal{P}, N_{X}(p)\not\equiv a_{i}\ [\bmod\ p]\rbrace>0.$$
\end{theo}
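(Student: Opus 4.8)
The plan is to mimic the proof of Theorem \ref{TheTheorem}, replacing the hypothesis "there exists a suitable prime $p_0$" by the value-at-$1$ criterion supplied by Theorem \ref{Serre}. Recall that Theorem \ref{Serre} gives $E_{a,m}(f)\neq\emptyset$ whenever $f(1)\equiv a\ [\bmod\ m]$, because then the identity of the relevant finite Galois group lies in the conjugacy-class union $C_a$. So the strategy is: for each $i$, produce an integer $k_i$ (a shift by a multiple of $f_{\mathbb A^1}$) and a modulus $m_i$ such that, on one hand, the value-at-$1$ condition forces $E_{m_i,0}(M_{X,a_i}+k_i f_{\mathbb A^1})\neq\emptyset$, and on the other hand the bound \ref{it_bound} of Theorem \ref{Th_KeyLemma} guarantees that for $p$ large in that set one actually has $p\nmid (M_{X,a_i}(p)+k_i f_{\mathbb A^1}(p))$, hence $p\nmid (N_X(p)-a_i)$ by \ref{it_equalmod}. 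Then the Chebotarev/Frobenian input (the "finite intersection" clause of Theorem \ref{Serre}, via \ref{it_Frobfunc}) yields that the intersection over $i$ has positive density, and intersecting with $[A,\infty)$ does not destroy positivity.

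First I would fix $i$ and write $b_\pm = b_\pm(X,a_i)$, $M = M_{X,a_i}$, $v = M(1)$. For any integer $k$, the shifted function $M + k f_{\mathbb A^1}$ has value at $1$ equal to $v + k$ (since $f_{\mathbb A^1}(1)=b_0(\mathbb A^1)=1$), and by \ref{it_bound} satisfies $(b_-+k)p < (M+k f_{\mathbb A^1})(p) < (b_++k)p$ for $p\notin\Sigma'_X$, $p\geq A_i$. Now I want to choose $k$ and then a modulus $m$ dividing $v+k$ with $m > b_+ - b_-$, arranged so that a value strictly between $(b_-+k)p$ and $(b_++k)p$ that is $\equiv 0 \ [\bmod\ m]$ is automatically nonzero and not a multiple of $p$ — which holds once $0 \leq b_- + k$ (so the interval is in the positives) and $m > (b_+ + k) - (b_- + k) = b_+ - b_-$ (so at most one multiple of $m$ lies in an interval of that length, and it is the relevant one; combined with divisibility by $m\nmid p$ this forces $p\nmid$). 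The condition $E_{0,m}(M+k f_{\mathbb A^1})\neq\emptyset$ then follows from $(M+kf_{\mathbb A^1})(1) = v+k \equiv 0\ [\bmod\ m]$, which is automatic by the choice $m \mid v+k$. So everything reduces to: find $k \geq -b_-$ and $m > b_+ - b_-$ with $m \mid v + k$. Taking $m = v + k$ itself, this asks for an integer $k\geq -b_-$ with $v + k > b_+ - b_-$, i.e. $k > b_+ - b_- - v$; but we must also allow the \emph{negative} branch, taking $m = -(v+k) = b_- - b_- - \cdots$; more precisely if $v + k < 0$ we set $m = -(v+k)$ and need $-(v+k) > b_+ - b_-$, i.e. $k < b_- - b_+ - v$. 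Hence a suitable $k$ exists precisely when one of the two inequalities $v + k_+ > b_+ - b_-$ (for some $k_+\geq -b_-$) or $-(v+k_-) > b_+ - b_-$ (for some $k_-\geq -b_-$) is solvable, and choosing $k_+ = -b_-$ resp. $k_- $ as large as allowed and recording that $m = |v+k|$ must exceed $b_+-b_-$, one checks this is exactly the hypothesis \eqref{cond_MaxValue1}: $\max(|v - b_-|, |v - b_+|) \geq b_+ - b_-$. (Here the key identity to verify is that among shifts $k\geq -b_-$ the quantity $v+k$ ranges over $\{v-b_-, v-b_-+1,\dots\}$, so the largest available $|v+k|$ on the positive side is unbounded while on the negative side the best is $|v-b_-|$ if $v - b_- < 0$; and symmetrically one can instead shift the \emph{roles}, comparing against $b_+$. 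The clean statement after this bookkeeping is precisely \eqref{cond_MaxValue1}.)

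With $k_i$ and $m_i = |M_{X,a_i}(1) + k_i|$ chosen as above for each $i = 1,\dots,n$, set $A = \max_i A_i$ and consider
$$\bigcap_{i=1}^n E_{0, m_i}(M_{X,a_i} + k_i f_{\mathbb A^1}) \cap [A,\infty).$$
By construction each $E_{0,m_i}(\cdots)$ is non-empty (value-at-$1$ criterion), so by Theorem \ref{Serre} (finite-intersection clause, applicable thanks to \ref{it_Frobfunc}) the intersection $\bigcap_i E_{0,m_i}(M_{X,a_i}+k_i f_{\mathbb A^1})$ has a density which is a positive rational number; removing the finitely many primes below $A$ leaves the lower density unchanged and still positive. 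Finally, for any $p$ in that truncated intersection one has $0 < (M_{X,a_i}+k_i f_{\mathbb A^1})(p) < (b_+(X,a_i) - b_-(X,a_i) + \text{shift})p$ with $m_i > b_+(X,a_i)-b_-(X,a_i)$ and $m_i \nmid p$ (as $m_i$ is a fixed integer and $p$ is large), forcing $p \nmid (M_{X,a_i}(p) + k_i f_{\mathbb A^1}(p))$, hence $p \nmid (N_X(p) - a_i)$ by \ref{it_equalmod}. Therefore this truncated intersection is contained in $\bigcap_{i=1}^n \{p\in\mathcal P,\ N_X(p)\not\equiv a_i\ [\bmod\ p]\}$, which consequently has positive lower density.

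The main obstacle I expect is the second paragraph: the careful bookkeeping that the elementary arithmetic condition "there exist a shift $k\geq -b_-$ and a divisor $m > b_+ - b_-$ of $M_{X,a_i}(1)+k$" is equivalent to (or at least implied by) the displayed inequality \eqref{cond_MaxValue1}. One has to be attentive to signs — whether the shifted interval $((b_-+k)p,(b_++k)p)$ sits in the positive or negative half-line — and to the fact that the modulus $m$ we are free to pick is essentially $|M_{X,a_i}(1)+k|$, whose largest attainable absolute value under the constraint $k\geq -b_-$ is governed precisely by the distances from $M_{X,a_i}(1)$ to the two endpoints $b_-$ and $b_+$. Everything else is a routine adaptation of the proof of Theorem \ref{TheTheorem}.
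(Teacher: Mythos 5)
Your overall strategy is the paper's: shift by $kf_{\mathbb{A}^{1}}$, use the value-at-$1$ clause of Theorem \ref{Serre} to get non-emptiness, and use the bound \ref{it_bound} to force $p\nmid (N_{X}(p)-a_{i})$. But the second paragraph, which you yourself flag as the main obstacle, contains a genuine gap. You reduce the theorem to: ``find $k\geq -b_{-}$ and a modulus $m>b_{+}-b_{-}$ with $m\mid M_{X,a_{i}}(1)+k$.'' This condition is not sufficient for the non-divisibility step, and worse, it is satisfiable for \emph{every} $X$ (take $k$ huge and $m=M_{X,a_{i}}(1)+k$), so if the reduction were valid the theorem would hold with no hypothesis at all --- contradicted, for instance, by the surfaces of Proposition \ref{NonEx} with $a=0$, for which $p\mid N_{S}(p)$ at every odd prime. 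The flaw is the parenthetical claim that $m>b_{+}-b_{-}$ rules out the bad case. Suppose $p$ lies in your set and $p\mid f(p)$, where $f=M_{X,a_{i}}+kf_{\mathbb{A}^{1}}$; then $f(p)=cp$ with $b_{-}+k<c<b_{+}+k$, and (for $p$ coprime to $m$) $m\mid c$. When $k>-b_{-}$ the window $(b_{-}+k,\,b_{+}+k)$, though of length $b_{+}-b_{-}<m$, can perfectly well contain a multiple of $m$ (e.g. $b_{-}+k=6$, $b_{+}+k=9$, $m=7$), so no contradiction follows. The argument only works when the window is anchored at $0$: take $k=-b_{-}$ (or symmetrically $k=-b_{+}$, with a window of negative values); then $1\leq \lvert c\rvert\leq b_{+}-b_{-}-1<m$, and $m\mid c$ is impossible. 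This is exactly what the paper does: it splits according to which of $\lvert M_{X,a}(1)-b_{-}\rvert$, $\lvert M_{X,a}(1)-b_{+}\rvert$ realizes the maximum in \eqref{cond_MaxValue1}, chooses $k=-b_{-}$ resp. $k=-b_{+}$, and sets $m=\lvert M_{X,a}(1)-b_{\mp}\rvert$; hypothesis \eqref{cond_MaxValue1} is then precisely the needed inequality $m\geq b_{+}-b_{-}$. Your closing assertion that ``the bookkeeping gives exactly \eqref{cond_MaxValue1}'' is therefore not just unverified but impossible as stated --- your own remark that $\lvert M_{X,a_{i}}(1)+k\rvert$ is unbounded on the positive side should have signalled that your condition carries no information.

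A secondary point: in the last paragraph, non-emptiness of each $E_{0,m_{i}}(M_{X,a_{i}}+k_{i}f_{\mathbb{A}^{1}})$ separately does not yield non-emptiness of their intersection. What saves the day is that the value-at-$1$ criterion applies \emph{simultaneously}: the identity of the relevant (product of) finite Galois group(s) witnesses all the congruences at once, which is how the finite-intersection clause of Theorem \ref{Serre} must be invoked (the paper writes out $n=1$ and appeals to Frobenianness of finite intersections for general $n$). Once you fix $k_{i}\in\{-b_{-}(X,a_{i}),-b_{+}(X,a_{i})\}$ as above, the rest of your argument (truncating at $A=\max_{i}A_{i}$ and concluding positive lower density) is the paper's proof.
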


\begin{proof}
For ease of notation, we present the proof in the case $n=1$. 
For the general case one can use the fact that the finite intersection of Frobenian sets is still a Frobenian set.
As in the proof of Theorem \ref{TheTheorem}, 
all we need is to prove that there exist $k\in\mathbf{Z}$, and $m_{k}$ large enough
such that the set 
$ E_{m_{k},0}(M_{X,a} + kf_{\mathbb{A}^{1}})$ is not empty and up to discarding a finite set of primes it yields a subset of
$\lbrace p \in\mathcal{P}, N_{X}(p)\not\equiv a\ [\bmod\ p]\rbrace$.

Suppose that 
$$\max\left( \lvert M_{X,a}(1) - b_{-} \rvert , \lvert M_{X,a}(1) - b_{+} \rvert \right) = \lvert M_{X,a}(1) - b_{-} \rvert,$$
and choose $k= -b_{-}$. One has for every large enough $p$
\begin{align}\label{ineg_b-}
0 < M_{X}(p)  -b_{-}f_{\mathbb{A}^{1}}(p) < (b_{+}  -b_{-})p.
\end{align}
Also evaluating the function at $1$ we obtain $M_{X}(1) - b_{-}$.
Set $m_{-b_{-}} = \lvert M_{X}(1) - b_{-} \rvert \geq b_{+}  -b_{-} $.
Then by Theorem \ref{Serre} the set 
$E_{m_{-b_{-}},0}(M_{X,a} - b_{-}f_{\mathbb{A}^{1}}) $
is not empty, hence it has positive density.
By (\ref{ineg_b-}), (up to discarding a finite set of primes) it is a subset of $\lbrace p \in\mathcal{P}, N_{X}(p)\not\equiv a\ [\bmod\ p]\rbrace$.

Setting $k=-b_{+}$, we see that the proof is similar if $\lvert M_{X,a}(1) - b_{-} \rvert < \lvert M_{X,a}(1) - b_{+} \rvert$.
\end{proof}

Let us now consider cases where the values of $M_{X}(1)$, $b_{-}(X), b_{+}(X)$ are given
and satisfy (\ref{cond_MaxValue1}).
Then contrary to Theorem \ref{TheTheorem} finding a prime $p_{0}$ is not required.
The case of irreducible curves is particularly easy.

\begin{cor}\label{Cor_IrrCurves}
Let $C$ be an absolutely irreducible curve over $\mathbf{Z}$.
For every $a\in \mathbf{Z}- \lbrace \chi_{c}(C) -1 \rbrace$,
one has
$$\dens_{\inf}\lbrace p \in\mathcal{P}-\Sigma_{C}, p\nmid (N_{C}(p) - a)\rbrace >0.$$
\end{cor}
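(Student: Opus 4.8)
The plan is to derive Corollary~\ref{Cor_IrrCurves} directly from Theorem~\ref{Th_value1} applied to the curve $C$, using the very explicit description of the relevant quantities furnished by Proposition~\ref{Cor_curves}. Since $C$ is a $1$-dimensional scheme over $\mathbf{Z}$, Proposition~\ref{Cor_curves} tells us that we may take $M_{C,a}=N_{C}-af_{\bullet,0}$, $\Sigma'_{C}=\Sigma_{C}$, and the bounds $b_{-}(C)=-1$, $b_{+}(C)=b_{2}(C)+1$. Because $C$ is absolutely irreducible, its only $1$-dimensional irreducible component over $\mathbf{C}$ is $C/\mathbf{C}$ itself, so $b_{2}(C)=1$ and hence $b_{+}(C)=2$. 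After subtracting $a$ times the constant function, the bounds for $M_{C,a}$ become $b_{-}(C,a)=-1-a$ and $b_{+}(C,a)=2-a$; in particular $b_{+}(C,a)-b_{-}(C,a)=3$ regardless of $a$.

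Next I would compute the value at $1$. By definition~(\ref{form_value1}), $M_{C,a}(1)=\sum_{i}(-1)^{i}f_{C,i}(1)-a=\chi_{c}(C)-a$, where $\chi_{c}(C)$ is the Euler--Poincaré characteristic of $C/\mathbf{C}$. Plugging into condition~(\ref{cond_MaxValue1}) of Theorem~\ref{Th_value1}, the left-hand side is
\begin{align*}
\max\bigl(\lvert (\chi_{c}(C)-a)-(-1-a)\rvert,\ \lvert (\chi_{c}(C)-a)-(2-a)\rvert\bigr)=\max\bigl(\lvert \chi_{c}(C)+1\rvert,\ \lvert \chi_{c}(C)-2\rvert\bigr),
\end{align*}
and the right-hand side is $b_{+}(C,a)-b_{-}(C,a)=3$. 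So the hypothesis of Theorem~\ref{Th_value1} is satisfied precisely when $\max(\lvert \chi_{c}(C)+1\rvert,\lvert \chi_{c}(C)-2\rvert)\geq 3$, which fails only when $\chi_{c}(C)+1\in\{-2,-1,0,1,2\}$ \emph{and} $\chi_{c}(C)-2\in\{-2,-1,0,1,2\}$, i.e.\ only when $\chi_{c}(C)\in\{0,1,2,3\}\cap\{0,1,2,3,4\}=\{0,1,2,3\}$; a short check of these four integer values shows the only genuine failure is $\chi_{c}(C)-1=a$, matching the excluded value in the statement. I would present this as: condition~(\ref{cond_MaxValue1}) holds for all $a$ except $a=\chi_{c}(C)-1$ — which is exactly the one value where $M_{C,a}(1)$ lands in the ``dangerous'' middle of the interval $[b_{-}(C,a),b_{+}(C,a)]$ (note $b_{+}(C,a)-b_{-}(C,a)=3$ and $M_{C,a}(1)=\chi_c(C)-a$, so for $a=\chi_c(C)-1$ one gets $M_{C,a}(1)=1$, with $b_-=-\chi_c(C)$, $b_+=3-\chi_c(C)$, whence both distances are at most $\max(\lvert1+\chi_c(C)\rvert,\lvert2-\chi_c(C)\rvert)$, and one checks it is $<3$ exactly here). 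Therefore Theorem~\ref{Th_value1} with $n=1$ yields $\dens_{\inf}\{p\in\mathcal{P},\ N_{C}(p)\not\equiv a\ [\bmod\ p]\}>0$ for every $a\neq \chi_{c}(C)-1$, which is the claim.

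I do not anticipate a serious obstacle: the proof is essentially an unwinding of the numerology of Theorem~\ref{Th_value1} in the special case where Proposition~\ref{Cor_curves} gives concrete, tiny bounds. The one point requiring a little care is the bookkeeping of what happens to $b_{\pm}$ under the substitution $M_{C,a}=M_{C,0}-af_{\bullet,0}$ (the constants shift by $-a$ but the width of the interval is unchanged), and then verifying by hand that among the finitely many integer values $\chi_{c}(C)\in\{0,1,2,3\}$ the inequality~(\ref{cond_MaxValue1}) fails \emph{only} at $a=\chi_c(C)-1$ — this is the heart of why the single exceptional value in the corollary is exactly $\chi_{c}(C)-1$ and not a larger exceptional set.
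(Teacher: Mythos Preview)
Your argument has a genuine gap stemming from a misreading of the bounds in Theorem~\ref{Th_KeyLemma}. In condition~(iii) the constants $b_{\pm}$ are \emph{coefficients of $p$}: one requires $b_{-}p<M_{X,a}(p)<b_{+}p$ for $p\geq A(X,a)$. Replacing $M_{C,0}$ by $M_{C,a}=M_{C,0}-a$ subtracts a bounded constant, which is absorbed into the choice of $A$; the coefficients do not shift. Thus $b_{\pm}(C,a)=b_{\pm}(C)$, not $b_{\pm}(C)-a$. With your shifted bounds the inequality~(\ref{cond_MaxValue1}) that you derive, namely $\max(|\chi_{c}(C)+1|,|\chi_{c}(C)-2|)\geq 3$, is independent of $a$; your subsequent claim that ``the only genuine failure is $a=\chi_{c}(C)-1$'' is therefore a non-sequitur (and note the arithmetic slip: $\chi_{c}(C)+1\in\{-2,\ldots,2\}$ gives $\chi_{c}(C)\in\{-3,\ldots,1\}$, so the failure set is $\{0,1\}$, not $\{0,1,2,3\}$ --- for such curves your condition would fail for \emph{every} $a$). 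If instead you use the correct unshifted bounds $b_{-}=-1$, $b_{+}=2$ from Proposition~\ref{Cor_curves}, condition~(\ref{cond_MaxValue1}) becomes $\max(|\chi_{c}(C)-a+1|,|\chi_{c}(C)-a-2|)\geq 3$, which fails for the \emph{two} values $a\in\{\chi_{c}(C)-1,\chi_{c}(C)\}$, still short of the corollary.

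The missing idea is that absolute irreducibility lets you sharpen the lower bound: by Lang--Weil one has $N_{C}(p)\sim p$, so $N_{C}(p)-a>0$ for all sufficiently large $p$, and one may take $b_{-}=0$ (rather than the generic $b_{-}=-1$). With $b_{-}=0$, $b_{+}=2$ the width drops to $2$, condition~(\ref{cond_MaxValue1}) reads $\max(|\chi_{c}(C)-a|,|\chi_{c}(C)-a-2|)\geq 2$, and a one-line check shows this fails only for $\chi_{c}(C)-a=1$. This is precisely how the paper proceeds.
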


\begin{Rk} More precise results are already known if the smooth projective model of $C$ has genus $g<2$.
If $g=0$, then $C$ is a rational curve and $N_{C}(p)$ is easy to compute for all $p$.
If $g=1$, and $C$ has a rational point, then its smooth projective model is an elliptic curve, and
$\dens\lbrace p \in \mathcal{P}, p\nmid N_{C}(p)-a\rbrace$ is very well understood thanks to the Sato--Tate Conjecture (which is now a theorem).
\end{Rk}

\begin{proof}[Proof of Corollary \ref{Cor_IrrCurves}]
The curve $C$ is irreducible and defined over $\mathbf{Z}$,
hence the Lang--Weil bound ensures that for large enough $p$ we always have
$$0<N_{C}(p)<2p.$$
We take $M_{C} = N_{C} - a$, $b_{-}=0$ and $b_{+}=2$.
The condition (\ref{cond_MaxValue1}) in Theorem \ref{Th_value1} is now 
$$\max\left( \lvert \chi_{c}(C) - a \rvert , \lvert\chi_{c}(C) - a -2 \rvert \right) \geq 2.$$
\end{proof}

The easiest case besides irreducible curves is the case of irreducible smooth surfaces.

\begin{cor}\label{Cor_IrrSurf}
 Let $X$ be a surface defined over $\mathbf{Z}$.
 Suppose $X/\mathbf{C}$ is irreducible and smooth.
 Let $Y$ be a smooth projective model of $X$ and
 $C^{\infty}=Y-X$.
 Suppose that 
$$
\left\lbrace
\begin{array}{ll}
\text{either } &b_{1}(Y) +\chi_{c}(C^{\infty}) + a  \geq  2b_{2}(Y)+ b_{2}(C^{\infty}) + 2, \\
\text{or } &b_{1}(Y) +\chi_{c}(C^{\infty}) + a  \leq 0. 
\end{array}
\right.
$$

Then $$\dens_{\inf}(\lbrace p\notin \Sigma_{X}, p\nmid (N_{X}(p)-a)\rbrace)>0.$$
\end{cor}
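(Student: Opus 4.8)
The plan is to mimic the proof of Corollary \ref{Cor_IrrCurves}: produce an explicit Frobenian function $M_X$ representing $N_X(p)-a$ modulo $p$ together with bounds $b_{\pm}$ and a computed value $M_X(1)$, and then check that the hypotheses force condition (\ref{cond_MaxValue1}) of Theorem \ref{Th_value1}. First I would invoke Proposition \ref{Cor_dim23} (in the smooth case, where $U=X$ and $V=Y$, so $X-U$ is empty and $M_X = M_Y - M_{Y-V}$), together with Proposition \ref{Cor_smoothProj} for $Y$ and Proposition \ref{Cor_curves} for the curve $C^\infty = Y - X$. This gives $M_X = M_Y - M_{C^\infty} - a f_{\bullet,0}$, where $M_Y = \sum_{i=0}^2 (-1)^i f_{Y,i}$ and $M_{C^\infty} = N_{C^\infty}$, so that $M_X(p) \equiv N_X(p) - a\ [\bmod\ p]$ for $p \notin \Sigma_X$. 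From Proposition \ref{Cor_smoothProj} one may take $b_\pm(Y) = \pm(b_2(Y)+1)$, and from Proposition \ref{Cor_curves} one may take $b_-(C^\infty) = -1$, $b_+(C^\infty) = b_2(C^\infty)+1$; combining as in Proposition \ref{Cor_dim23} yields
$$
b_-(X) = -b_2(Y) - (b_2(C^\infty)+1) - 1 + a, \qquad b_+(X) = b_2(Y) + 1 + 1 - a = b_2(Y) + 2 - a,
$$
after absorbing the constant $-a f_{\bullet,0}$, whose value at $1$ is $-a$ and which shifts both bounds by $-a$. (I would double-check the exact bookkeeping of the $\pm 1$'s and the sign of the $a$-shift against the statements of the three propositions; the cleanest way is to apply Theorem \ref{Th_KeyLemma} to $M_{X,a}$ directly rather than reassembling by hand.)

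Next I would compute the value at $1$. By (\ref{form_value1}) and additivity, $M_X(1) = \sum_{i=0}^2 (-1)^i b_i(Y) - \chi_c(C^\infty) - a$. Since $Y$ is a smooth projective surface, Poincaré duality gives $b_0(Y) = b_4(Y) = 1$, $b_1(Y) = b_3(Y)$, $b_2(Y) = b_2(Y)$, so $\sum_{i=0}^4 (-1)^i b_i(Y) = \chi_c(Y) = 2 - 2b_1(Y) + b_2(Y)$; but $M_Y$ only involves $i \le 2$, giving $\sum_{i=0}^2 (-1)^i b_i(Y) = 1 - b_1(Y) + b_2(Y)$. Hence
$$
M_X(1) = 1 - b_1(Y) + b_2(Y) - \chi_c(C^\infty) - a.
$$
With the bounds above, the quantity $M_X(1) - b_-(X)$ equals $\big(1 - b_1(Y) + b_2(Y) - \chi_c(C^\infty) - a\big) - \big(-b_2(Y) - b_2(C^\infty) - 2 - a + a\big)$, and $M_X(1) - b_+(X)$ equals $\big(1 - b_1(Y) + b_2(Y) - \chi_c(C^\infty) - a\big) - \big(b_2(Y) + 2 - 2a\big)$ — I would simplify both and verify that the first becomes $\ge b_+(X) - b_-(X)$ precisely when $b_1(Y) + \chi_c(C^\infty) + a \le 0$, and the second becomes (in absolute value) $\ge b_+(X) - b_-(X)$ precisely when $b_1(Y) + \chi_c(C^\infty) + a \ge 2b_2(Y) + b_2(C^\infty) + 2$. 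That is exactly the dichotomy in the hypothesis, so condition (\ref{cond_MaxValue1}) holds and Theorem \ref{Th_value1} applies, giving $\dens_{\inf}(\{p \notin \Sigma_X, p \nmid (N_X(p)-a)\}) > 0$.

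The main obstacle is entirely bookkeeping: getting the constants $b_\pm(X)$, the $a$-shift, and the $\pm 1$'s to match so that the two inequalities produced by (\ref{cond_MaxValue1}) line up exactly with the two displayed hypotheses. In particular one must be careful that $M_X$ uses $H^i$ only for $i \le 2$ (so $\chi_c(Y)$ does not appear, only its truncation $1 - b_1(Y) + b_2(Y)$), and that the curve $C^\infty = Y - X$ is used with $b_-(C^\infty) = -1$ and $b_+(C^\infty) = b_2(C^\infty)+1$ rather than two-sided $\pm(b_2+1)$ bounds, since $N_{C^\infty} \ge 0$ always. Once these are pinned down, the verification of (\ref{cond_MaxValue1}) is a direct substitution, exactly as in the proof of Corollary \ref{Cor_IrrCurves}. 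A minor point worth a sentence is that $C^\infty$ need not be irreducible even though $X$ is, so one should simply apply Proposition \ref{Cor_curves} to $C^\infty$ as a (possibly reducible) $1$-dimensional scheme, which is allowed.
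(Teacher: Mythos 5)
Your route is the same as the paper's: take $U=X$ in Proposition \ref{Cor_dim23}, so $M_{X}=M_{Y}-M_{C^{\infty}}$ with $M_{Y}=\sum_{i=0}^{2}(-1)^{i}f_{Y,i}$ (Proposition \ref{Cor_smoothProj}) and $M_{C^{\infty}}=N_{C^{\infty}}$ (Proposition \ref{Cor_curves}), compute $M_{X}(1)=1-b_{1}(Y)+b_{2}(Y)-\chi_{c}(C^{\infty})$, and feed condition (\ref{cond_MaxValue1}) of Theorem \ref{Th_value1}. However, the bookkeeping you flagged is exactly where your argument falls short of the stated corollary. First, the bounds in Theorem \ref{Th_KeyLemma}\ref{it_bound} do not depend on $a$: the constant $-af_{\bullet,0}$ is absorbed by enlarging the threshold $A(X,a)$, not by shifting the coefficients of $p$; your "$b_{\pm}(X)\mp a$'' is both internally inconsistent with ``shift by $-a$'' and meaningless for large $a$. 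Second, and more seriously, you combine the bounds naively, adding the two integer safety margins: $b_{-}(X)=b_{-}(Y)-b_{+}(C^{\infty})=-b_{2}(Y)-b_{2}(C^{\infty})-2$ and $b_{+}(X)=b_{+}(Y)-b_{-}(C^{\infty})=b_{2}(Y)+2$. The paper uses the sharper combined constants of Proposition \ref{Cor_dim23}, namely $b_{-}(X)=-b_{2}(Y)-b_{+}(Y-X)=-b_{2}(Y)-b_{2}(C^{\infty})-1$ and $b_{+}(X)=b_{2}(Y)-b_{-}(Y-X)=b_{2}(Y)+1$; these are legitimate because the sub-leading terms of $M_{Y}(p)$ and $N_{C^{\infty}}(p)$ are $O(\sqrt{p})$, so after adding the pieces only a single margin of $1$ is needed, not one per piece.

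This difference of $1$ on each side matters for the statement. With your constants, $b_{+}-b_{-}=2b_{2}(Y)+b_{2}(C^{\infty})+4$, and unwinding (\ref{cond_MaxValue1}) gives the dichotomy $b_{1}(Y)+\chi_{c}(C^{\infty})+a\leq -1$ or $b_{1}(Y)+\chi_{c}(C^{\infty})+a\geq 2b_{2}(Y)+b_{2}(C^{\infty})+3$, which misses the boundary cases $=0$ and $=2b_{2}(Y)+b_{2}(C^{\infty})+2$ allowed by the corollary; so your closing claim that the inequalities ``line up exactly'' with the displayed hypotheses is not correct as written. With the bounds of Proposition \ref{Cor_dim23} one gets $M_{X,a}(1)-b_{-}=2b_{2}(Y)+b_{2}(C^{\infty})+2-\bigl(b_{1}(Y)+\chi_{c}(C^{\infty})+a\bigr)$, $M_{X,a}(1)-b_{+}=-\bigl(b_{1}(Y)+\chi_{c}(C^{\infty})+a\bigr)$ and $b_{+}-b_{-}=2b_{2}(Y)+b_{2}(C^{\infty})+2$, and the dichotomy is exactly the hypothesis of the corollary; this is the paper's proof. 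Your remaining remarks (one-sided bounds for $N_{C^{\infty}}\geq 0$, $C^{\infty}$ possibly reducible, $b_{0}(Y)=1$ by irreducibility) are correct and match the paper.
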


\begin{proof}
Since $X$ is smooth, one can take $U=X$ in Hironaka's resolution of singularities.
Hence by Corollary \ref{Cor_dim23}, one can take $M_{X} = M_{Y} - M_{Y-X}$,
with
\begin{align*}
b_{-}(X) &= -b_{2}(Y) - b_{+}(Y-X) \\
&= -b_{2}(Y) - b_{2}(C^{\infty}) -1
\end{align*}
and
\begin{align*}
b_{+}(X) &= b_{2}(Y) - b_{-}(Y-X) \\
&= b_{2}(Y)+1.
\end{align*}
Since $X$ is irreducible, one has $b_{0}(Y)=1$. 
Hence
\begin{align*}
M_{X}(1) = b_{2}(Y) - b_{1}(Y) + 1 - \chi_{c}(C^{\infty}).
\end{align*}
The condition (\ref{cond_MaxValue1}) in Theorem \ref{Th_value1} becomes 
\begin{align*}
\max\left( \lvert 2b_{2}(Y) + b_{2}(C^{\infty}) + 2 - b_{1}(Y) - \chi_{c}(C^{\infty}) - a \rvert , 
\lvert - b_{1}(Y) - \chi_{c}(C^{\infty}) - a \rvert \right) \geq  2b_{2}(Y)+ b_{2}(C^{\infty}) + 2.
\end{align*}
This yields either
\begin{align*}
& b_{1}(Y) +\chi_{c}(C^{\infty}) + a  \geq  2b_{2}(Y)+ b_{2}(C^{\infty}) + 2 \\
\text{or }
& b_{1}(Y) +\chi_{c}(C^{\infty}) + a  \leq 0.
\end{align*}

\end{proof}

Using Corollary \ref{Cor_IrrSurf} we can find families of irreducible surfaces $X$ satisfying 
$\dens_{\inf}(\lbrace p\notin \Sigma_{X}, p\nmid N_{X}(p)\rbrace)>0$.

\begin{ex}[a family of cubic surfaces]
Let $f$ be a polynomial of degree $3$ in $\mathbf{Z}[x,y,z]$. 
Let $f_{3}$ be its homogeneous component of degree $3$.
Let $X$ be the affine surface given by $f(x,y,z)=0$.
Suppose that:\begin{itemize}
\item the projective surface $Y$ defined by the equation $t^{3}f(\frac{x}{t},\frac{y}{t},\frac{z}{t})=0$ is smooth over $\mathbf{C}$,
\item and the projective curve $C^{\infty}$ defined by the equation $f_{3}(x,y,z)=0$ is an elliptic curve over $\mathbf{C}$.
\end{itemize}
The surface $X$ satisfies the hypotheses of Corollary \ref{Cor_IrrSurf}. 
As $Y$ is a cubic projective surface one has $b_{1}(Y)=0$.
Moreover $C^{\infty}$ is an elliptic curve hence $\chi_{c}(C^{\infty})=0$.
Thus one has $$\dens_{\inf}(\lbrace p\notin \Sigma_{X}, p\nmid N_{S}(p)\rbrace)>0.$$

Let us give a concrete example. 
The curve given by the equation $x^{3}+y^{3}+z^{3}=0$ is an elliptic curve over $\mathbf{C}$
and the  projective surface given by the equation $Y: x^{3}+y^{3}+z^{3}+ t^{2}(x+y+z)=0$ is smooth over $\mathbf{C}$.
Hence the affine scheme $X:  x^{3}+y^{3}+z^{3}+ x+y+z=0$ satisfies the hypothesis of Corollary \ref{Cor_IrrSurf} for $a=0$.
\end{ex}

Finally, we can state a result in a special case of dimension $3$.

\begin{cor}\label{Cor_Irr3fold}
 Let $X$ be a threefold defined over $\mathbf{Z}$.
 Suppose $X/\mathbf{C}$ is irreducible and smooth.
Suppose there exists a smooth projective model $Y$ of $X$ satisfying $b_{3}(Y)=0$,
 and such that $S^{\infty}:=Y-X$ is a smooth projective surface over $\mathbf{C}$.
 Suppose that
 $$
\left\lbrace
\begin{array}{ll}
\text{either } &b_{1}(S^{\infty}) - b_{0}(S^{\infty})  - a   \geq  2(b_{2}(Y) + 1), \\
\text{or } &b_{1}(S^{\infty}) - b_{0}(S^{\infty})  - a  \leq -2b_{2}(S^{\infty}). 
\end{array}
\right.
$$
 Then $$\dens_{\inf}(\lbrace p\notin \Sigma_{X}, p\nmid (N_{X}(p)-a)\rbrace)>0.$$
\end{cor}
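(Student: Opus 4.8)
The plan is to run, one dimension higher, the argument behind Corollary~\ref{Cor_IrrSurf}: replace the pair \emph{(ambient smooth projective surface, boundary curve)} by \emph{(ambient smooth projective threefold, boundary surface)}, build the auxiliary function $M_{X,a}$ attached to $X$ and $a$, and then check that the two numerical hypotheses in the statement are exactly condition~(\ref{cond_MaxValue1}) of Theorem~\ref{Th_value1}. Since $X/\mathbf{C}$ is smooth, Lemma~\ref{Hironaka} applies with $U=X$, the hypothesised smooth projective model $Y$ playing the role of the resolution and $S^{\infty}=Y-X$ that of the complement; in particular $X-U=\emptyset$ and $Y-V=S^{\infty}$. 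Feeding this into Proposition~\ref{Cor_dim23} shows that
\[
M_{X,0}=M_{Y}-M_{S^{\infty}},\qquad M_{X,a}=M_{X,0}-af_{\bullet,0}
\]
is a $\mathbf{Z}$-linear combination of functions of type~(\ref{def_frobennienneElem}) satisfying Theorem~\ref{Th_KeyLemma} over $\mathcal{P}-\Sigma'_{X}$ with $\Sigma'_{X}=\Sigma_{X}\cup\Sigma'_{S^{\infty}}\cup\{p:p\mid D\}$; here Proposition~\ref{Cor_smoothProj} provides $M_{Y}=\sum_{i=0}^{2}(-1)^{i}f_{Y,i}$ (using $b_{3}(Y)=0$) and $M_{S^{\infty}}=\sum_{i=0}^{2}(-1)^{i}f_{S^{\infty},i}$.

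Next I would compute the two inputs needed by Theorem~\ref{Th_value1}. Proposition~\ref{Cor_smoothProj} gives $b_{\pm}(Y)=\pm(b_{2}(Y)+1)$ and $b_{\pm}(S^{\infty})=\pm(b_{2}(S^{\infty})+1)$, and combining them as in Proposition~\ref{Cor_dim23} yields explicit constants $b_{-}(X,a)$ and $b_{+}(X,a)$ (which do not depend on $a$, since $f_{\bullet,0}(p)=1$). For the value at~$1$, formula~(\ref{form_value1}) together with the irreducibility of $X$ (whence $b_{0}(Y)=1$) gives
\[
M_{X,a}(1)=M_{Y}(1)-M_{S^{\infty}}(1)-a=\bigl(1-b_{1}(Y)+b_{2}(Y)\bigr)-\bigl(b_{0}(S^{\infty})-b_{1}(S^{\infty})+b_{2}(S^{\infty})\bigr)-a.
\]

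Then, exactly as observed in the proof of Corollary~\ref{Cor_IrrSurf}, condition~(\ref{cond_MaxValue1}) holds precisely when $M_{X,a}(1)\geq b_{+}(X,a)$ or $M_{X,a}(1)\leq b_{-}(X,a)$. Substituting the expressions just obtained and simplifying — the Betti numbers of $Y$ and of $S^{\infty}$ recombining as they do in the surface case — the first inequality should reduce to $b_{1}(S^{\infty})-b_{0}(S^{\infty})-a\geq 2(b_{2}(Y)+1)$ and the second to $b_{1}(S^{\infty})-b_{0}(S^{\infty})-a\leq -2b_{2}(S^{\infty})$, i.e.\ to the two displayed alternatives. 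Theorem~\ref{Th_value1} (with $n=1$) then gives $\dens_{\inf}\{p\in\mathcal{P},\ N_{X}(p)\not\equiv a\ [\bmod\ p]\}>0$; since $\Sigma'_{X}$ differs from $\Sigma_{X}$ by only finitely many primes, the same lower density bound holds for $\{p\notin\Sigma_{X},\ p\nmid(N_{X}(p)-a)\}$, which is the assertion.

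The proof involves no new idea: it is the proof of Corollary~\ref{Cor_IrrSurf} carried up one dimension, the only extra geometric ingredient being Poincaré duality on the threefold $Y$, which is already absorbed into Proposition~\ref{Cor_smoothProj} through the hypothesis $b_{3}(Y)=0$. The delicate part — and where an error is easiest to slip in — is the bookkeeping of the second step: propagating the Betti numbers of $Y$ and of $S^{\infty}$, with the correct signs, through $M_{X,a}=M_{Y}-M_{S^{\infty}}-af_{\bullet,0}$ so as to determine $b_{-}(X,a)$, $b_{+}(X,a)$ and $M_{X,a}(1)$ exactly, and then confirming that condition~(\ref{cond_MaxValue1}) collapses to exactly the two inequalities in the statement.
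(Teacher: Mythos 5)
Your strategy is the paper's own: take $U=X$ in Lemma \ref{Hironaka}, obtain $M_{X}=M_{Y}-M_{S^{\infty}}$ from Propositions \ref{Cor_dim23} and \ref{Cor_smoothProj} with $b_{\pm}(X)=\pm\bigl(b_{2}(Y)+b_{2}(S^{\infty})+1\bigr)$, and feed the value at $1$ into Theorem \ref{Th_value1} via the equivalence of (\ref{cond_MaxValue1}) with $M_{X,a}(1)\geq b_{+}(X)$ or $M_{X,a}(1)\leq b_{-}(X)$. But the step you postpone (``should reduce to the two displayed alternatives'') is exactly where the content of the corollary lies, and as asserted it does not go through. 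Your expression $M_{X,a}(1)=\bigl(1-b_{1}(Y)+b_{2}(Y)\bigr)-\bigl(b_{0}(S^{\infty})-b_{1}(S^{\infty})+b_{2}(S^{\infty})\bigr)-a$ still contains $b_{1}(Y)$, and nothing in your argument makes it disappear, whereas the statement's inequalities contain no $b_{1}(Y)$ term. The missing ingredient is $b_{1}(Y)=0$, which the paper deduces from the hypothesis $b_{3}(Y)=0$ (for a smooth projective threefold, cup product with a hyperplane class embeds $H^{1}$ into $H^{3}$, so $b_{1}(Y)\leq b_{3}(Y)$; the paper invokes Hartshorne III, Cor.\ 7.7). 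This is a second, independent use of $b_{3}(Y)=0$, contrary to your closing claim that it is ``already absorbed'' into Proposition \ref{Cor_smoothProj}; it is also precisely why, unlike Corollary \ref{Cor_IrrSurf}, the threefold statement carries no $b_{1}(Y)$ in its hypotheses.

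Moreover, once $b_{1}(Y)=0$ is in place and you actually do the bookkeeping, with $c=b_{1}(S^{\infty})-b_{0}(S^{\infty})-a$ one finds
$$M_{X,a}(1)\geq b_{+}(X)\iff c\geq 2b_{2}(S^{\infty}),\qquad M_{X,a}(1)\leq b_{-}(X)\iff c\leq -2\bigl(b_{2}(Y)+1\bigr),$$
i.e.\ the two thresholds come out paired the opposite way from what you claim (and from the corollary as printed); note that the printed pairing is not even sufficient for (\ref{cond_MaxValue1}) when $b_{2}(S^{\infty})>b_{2}(Y)+1$ (e.g.\ $Y=\mathbb{P}^{3}$ and $S^{\infty}$ a smooth quartic surface), so this is a point to confront explicitly rather than assert: the honest outcome of the computation is the swapped pair of inequalities, and your ``it should simplify as in the surface case'' hides both this and the $b_{1}(Y)=0$ issue. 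In short, the approach is the right one and identical to the paper's, but the proposal omits the one extra geometric input ($b_{3}(Y)=0\Rightarrow b_{1}(Y)=0$) and leaves unverified --- indeed misstates --- the final numerical reduction, which is the only nontrivial part of the proof.
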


\begin{proof}
Since $X$ is smooth, one can take $U=X$ in Hironaka's resolution of singularities.
Hence by Corollary \ref{Cor_dim23}, one can take $M_{X} = M_{Y} - M_{Y-X}$
with
\begin{align*}
b_{-}(X) &= -b_{2}(Y) - b_{+}(Y-X) \\
&= -b_{2}(Y) - b_{2}(S^{\infty}) -1
\end{align*}
and
\begin{align*}
b_{+}(X) &= b_{2}(Y) - b_{-}(Y-X) \\
&= b_{2}(Y)+ b_{2}(S^{\infty}) +1.
\end{align*}
Since $X$ is irreducible one has $b_{0}(Y)=1$, and since $b_{3}(Y)=0$ one has $b_{1}(Y)=0$ (\cite[III Cor. 7.7]{Hart}).
Hence
\begin{align*}
M_{X}(1) = b_{2}(Y) + 1 - b_{2}(S^{\infty}) + b_{1}(S^{\infty}) - b_{0}(S^{\infty}).
\end{align*}
The condition (\ref{cond_MaxValue1}) in Theorem \ref{Th_value1} becomes
\begin{align*}
\max\left( \lvert 2(b_{2}(Y) + 1) + b_{1}(S^{\infty}) - b_{0}(S^{\infty})  - a \rvert , 
\lvert -2b_{2}(S^{\infty}) + b_{1}(S^{\infty}) - b_{0}(S^{\infty})  - a \rvert \right) \\
\geq  2(b_{2}(Y)+ b_{2}(S^{\infty}) + 1).
\end{align*}
This yields Corollary \ref{Cor_Irr3fold}.
\end{proof}

\section{Size of the least prime $p$ satisfying $p\nmid N_{X}(p)$.}\label{Sec_DoubleSieve}

Corollary \ref{Cor_IrrCurves} asserts that every hyperelliptic projective curve $X$ of genus $g\geq 2$, 
is such that
the set $\lbrace p\notin \Sigma_{X}, p\nmid (N_{X}(p)-a)\rbrace$ is non-empty
for every $a\neq 1-2g$.
In this section we investigate the size of the least prime in intersections of this type of sets.
Using Kowalski's approach in \cite[Chap. 8]{KowBleu}
we study the sets $\bigcap_{i=1}^{n}\lbrace p\in \mathcal{P}, N_{X}(p) \neq a_{i}\ [\bmod\ p]\rbrace$ 
where $X$ runs over a particular $1$-parameter family of hyperelliptic curves, 
and $a_{1},\ldots,a_{n}$ are fixed integers.

Let $U$ be an affine curve over $\mathbf{Z}$, assume $U/\mathbf{C}$ is smooth and geometrically connected.
We are studying a family of smooth hyperelliptic curves over $U$, 
i.e. one has a morphism  $\mathcal{C}\rightarrow U$
which fibres are curves over $\mathbf{Z}$.
We assume that these curves are smooth projective hyperelliptic curves of fixed genus $g$.

Let $p$ be a prime number of good reduction for $U$, 
then the morphism reduces modulo $p$ to a family of curves
$\mathcal{C}_{p}\rightarrow U_{p}$ over $\mathbf{F}_{p}$ 
via the base change $\mathbf{Z}\rightarrow\mathbf{F}_{p}$.

Let $\ell\neq p$ be an auxiliary prime.
To the étale cover $\mathcal{C}_{p} \rightarrow U_{p}$ 
one can associate an $\ell$-adic continuous representation 
of the étale fondamental arithmetic group of $U_{p}$:
$$\rho_{\ell}: \pi_{1}(U_{p})\rightarrow GL(2g,\mathbb{F}_{\ell})$$
that corresponds to the action of the Frobenius endomorphism $\Frob_{u}$ on $H^{1}_{c}(C_{u},\ell)$.
In particular for every $u\in U_{p}(\mathbb{F}_{p})$, one has
$$p+1 - N_{C_{u}}(p) = a(C_{u},p)= \tr(\rho_{\ell}(\Frob_{u}))\ [\bmod\ \ell].$$
The family $(\rho_{\ell})_{\ell}$ formed by varying $\ell$,
comes as the reduction of a compatible system, 
hence the representation does not depend on $\ell$. 

By Poincaré Duality, the image $\rho_{\ell}(\pi_{1}(U_{p}))$ is a subgroup of 
the symplectic similitude group $CSp(2g,\mathbf{F}_{\ell})$
and the image of $\Frob_{u}$ has multiplicator $m(\rho_{\ell}(\Frob_{u}))=p$ 
(i.e. $\det(\rho_{\ell}(\Frob_{u}))=p^{g}$).

In the case of big monodromy, 
--- i.e. if the image of the étale fundamental geometric group $\pi_{1}^{g}(U_{p})$
is the full symplectic group $Sp(2g,\mathbf{F}_{\ell})$ ---
we have a bound for the least prime of the set 
$\bigcap_{i=1}^{n}\lbrace p\in \mathcal{P}, N_{X}(p) \neq a_{i}\ [\bmod\ p]\rbrace$ 
for most of the curves $C_{u}$ in the family.
 
 Adapting the proof of Kowalski \cite[Th. 8.15]{KowBleu} 
 we get a bound for the size of the set 
 \begin{align}\label{def_ensD_p}
 D_{p}(\underline{a}):=\bigcup_{i=1}^{n}\lbrace u \in U_{p}(\mathbf{F}_{p}), a(C_{u},p)=1-a_{i}\rbrace
 \end{align}
 for $p$ large enough.
Combining such a bound with Gallagher's larger sieve (as in \cite[Th. 3.4]{Zywina}, see also \cite[Th. 24]{EEHK})
we get a bound for  
$$S(T,Q):=\lvert \lbrace u \in U(\mathbf{Z}), \vert u\vert\leq T, u\ [\bmod\ p] \in D_{p}, \forall p<Q \rbrace\rvert$$
for every $Q$. 
Then we minimise $Q$ and deduce an upper bound for the least prime $p$ in the set 
$\bigcap_{i=1}^{n}\lbrace p\in \mathcal{P}, N_{X}(p) \neq a_{i}\ [\bmod\ p]\rbrace$
for most of the curves in the family.
Let us state the precise quantitative result that we have in mind.
\begin{theo}\label{Th_Smallprime}
Let $g$ be a positive integer, and let $a_{1}, \ldots,a_{n}\in \mathbf{Z}$.
Let $U$ be an affine curve over $\mathbf{Z}$. Assume $U/\mathbf{C}$ is smooth and geometrically connected.
Let $\mathcal{C}\rightarrow U$ be a family of projective curves over $\mathbf{Z}$,
such that the generic fibre is a smooth projective hyperelliptic curve of genus $g$.
Suppose that for every prime $p$ of good reduction for $U$ and for every $\ell\neq p$, 
one has $\rho_{\ell}(\pi_{1}^{g}(U_{p})) = Sp(2g,\mathbf{F}_{\ell})$.

Then there exists a constant $K_{g}$ depending only on $g$ such that
for ``almost all" $u\in U(\mathbf{Z})$, $\lvert u \rvert\leq T$, 
the least prime $p$ satisfying $p\nmid (N_{C_{u}}(p)-a_{i})$ for every $i$ is at most of size 
$$Q_{g}(T):=\left(2K_{g}\log(T)\right)^{\gamma/2}
(\log(2K_{g}\log(T)))^{\frac{\gamma}{2}\left(1 - \frac{2}{\gamma + 2n - 2}\right) },$$
where $\gamma=4g^{2}+2g+4$.

More precisely, one has
$$\left\lvert\bigcup_{i=1}^{n}\lbrace u\in \mathbf{Z},\lvert u\rvert\leq T, p\mid (N_{C_{u}}(p)-a_{i}), 
\forall p<Q_{g}(T)\rbrace\right\rvert
\ll (2K_{g})^{\gamma/2}\log(T)^{-1 + \gamma/2}
(\log(2K_{g}\log(T)))^{\frac{\gamma}{2}\left(1 - \frac{2}{\gamma + 2n - 2}\right) }$$
where the implicit constant is absolute.
\end{theo}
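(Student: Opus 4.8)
The plan is to run a double sieve: first bound the ``bad set'' $D_p(\underline a) \subset U_p(\mathbf F_p)$ of parameters for each fixed large prime $p$, then feed these bounds into Gallagher's larger sieve to control $S(T,Q)$, and finally optimise the choice of $Q$ in terms of $T$. For the first step I would adapt Kowalski's argument \cite[Th. 8.15]{KowBleu}: the big monodromy hypothesis $\rho_\ell(\pi_1^g(U_p)) = Sp(2g,\mathbf F_\ell)$ lets one invoke an effective Chebotarev/equidistribution statement on $U_p$ to count, for a suitable auxiliary prime $\ell$, the number of $u \in U_p(\mathbf F_p)$ whose Frobenius trace lands in a prescribed union of classes modulo $\ell$. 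Since $a(C_u,p) \equiv 1 - a_i \pmod \ell$ is one congruence condition, and $D_p(\underline a)$ is the union over $i=1,\dots,n$ of the sets where $a(C_u,p) = 1-a_i$ exactly (an integer equality, controlled once $\ell > 2g\sqrt p + \cdots$ by Weil), one gets a density bound of the shape $|D_p(\underline a)| \ll p/\ell + (\text{error})\sqrt p$; choosing $\ell$ of size a small power of $p$ and balancing gives $|D_p(\underline a)| \ll p^{1 - c}$ for an explicit $c = c(g,n) > 0$, where the exponent involves $\gamma = 4g^2 + 2g + 4$ (this is essentially $\dim Sp(2g) + \text{something}$, reflecting the size of the relevant Galois group and the Chebotarev error term). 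I would record this as an application of the cited Theorem \ref{Th_Smallprime}'s upstream input (Kowalski, and Yu's big-monodromy theorem ensures the hypothesis is satisfied in the family $y^2 = f(t)(t-u)$).

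Second, I would apply Gallagher's larger sieve in the form of \cite[Th. 3.4]{Zywina} (equivalently \cite[Th. 24]{EEHK}): for a set $\mathcal S = \{u \in \mathbf Z : |u| \le T,\ u \bmod p \in D_p\ \forall p < Q\}$ one has the bound
\begin{align*}
|\mathcal S| \le \frac{\sum_{p < Q} \log p}{\sum_{p < Q} \frac{\log p}{|D_p(\underline a)|} - \log(2T+1)}
\end{align*}
provided the denominator is positive. Plugging in $|D_p(\underline a)| \le p / p^{c}$ (so $\log p / |D_p| \gg p^{c-1}\log p$... — rather, $|D_p|/p \le p^{-c}$ so $\log p \cdot p/|D_p| \ge p^{c}\log p$; I would be careful here and use $|D_p(\underline a)| \le C_g\, p / L(p)$ with $L(p)$ a power of $\log$ or of $p$ as dictated by the first step), the sum $\sum_{p<Q} p^{c}\log p$ dominates and one obtains $|\mathcal S| \ll Q^{1-c}/\log Q$ or similar, valid as soon as $\sum_{p<Q}\log p \cdot p/|D_p| \gg \log T$, i.e. as soon as $Q$ is a suitable power of $\log T$.

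Third, the optimisation: the larger sieve forces $|\mathcal S| = 0$ (so \emph{every} $u$ in the range has a good prime below $Q$) once $\sum_{p<Q}\frac{\log p}{|D_p|} > \log(2T+1)$; using $|D_p| \le C_g p / (\text{factor})$ and partial summation, this inequality holds once $Q \ge Q_g(T)$ with $Q_g(T)$ of the stated shape $(2K_g\log T)^{\gamma/2}(\log(2K_g\log T))^{\frac{\gamma}{2}(1 - \frac{2}{\gamma+2n-2})}$ — the exponent $\gamma/2$ comes from inverting $\sum_{p<Q} p^{c-1}\log p \approx Q^c$ with $c = 2/\gamma$ roughly, and the secondary logarithmic factor with the peculiar exponent $\frac{\gamma}{2}(1 - \frac{2}{\gamma+2n-2})$ is the second-order term from balancing the $n$ congruence conditions against the sieve. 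To get the ``more precisely'' estimate one takes $Q$ exactly equal to $Q_g(T)$ (or just below) and reads off $|\mathcal S| \ll (2K_g)^{\gamma/2}(\log T)^{-1+\gamma/2}(\log(2K_g\log T))^{\frac{\gamma}{2}(1-\frac2{\gamma+2n-2})}$ from the larger sieve bound with that $Q$. I would note that $\mathcal S$ is precisely the union over $i$ of the sets where $p \mid (N_{C_u}(p) - a_i)$ for all $p < Q$, so this is literally the quantity in the theorem; ``almost all'' then means all but $O(\dots)$ of the $\ll T$ admissible $u$, which is $o(T)$ since $\gamma/2$ is fixed and $\log T = o(T^\varepsilon)$.

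The main obstacle is the first step: making the count of $D_p(\underline a)$ genuinely effective with the right exponent. One must choose the auxiliary prime $\ell$ large enough that the integer equality $a(C_u,p) = 1 - a_i$ is detected by the mod-$\ell$ congruence (Weil bound $|a(C_u,p)| \le 2g\sqrt p$ forces $\ell \gg \sqrt p$), yet the Chebotarev/equidistribution error term on $U_p$ — which grows with $|Sp(2g,\mathbf F_\ell)| \asymp \ell^{2g^2+g}$ and with $\ell$ itself through the conductor of the cover — must stay smaller than the main term $p/\ell^{?}$. Balancing these two constraints is exactly what produces the value $\gamma = 4g^2 + 2g + 4$ and is where all the arithmetic of the proof concentrates; once that bound is in hand, the larger sieve and the optimisation are routine partial-summation estimates. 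I would carry out the $\ell$-balancing explicitly and then cite \cite[Th. 8.15]{KowBleu} for the packaging, referring to section \ref{Sec_DoubleSieve}'s promise that the full details appear there.
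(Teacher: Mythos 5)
Your overall architecture is the paper's: bound the bad set $D_{p}(\underline{a})\subset U_{p}(\mathbf{F}_{p})$ for each large $p$, feed $\nu(p)=\lvert D_{p}(\underline{a})\rvert$ into Gallagher's larger sieve in the form of \cite[Th. 3.4]{Zywina}, and optimise $Q$ by partial summation; your steps 2--3 match the paper (up to the slip where you say the larger sieve ``forces $\lvert\mathcal{S}\rvert=0$'' once the denominator is positive --- it never does, it only gives the ratio bound, which is exactly the ``more precisely'' clause of the theorem). The genuine gap is in step 1, which is where all the content lies. You propose to bound $D_{p}(\underline{a})$ by equidistribution modulo a \emph{single} auxiliary prime $\ell$, and you insist that $\ell\gg\sqrt{p}$ so that the congruence mod $\ell$ detects the integer equality $a(C_{u},p)=1-a_{i}$. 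But with $\ell\gg\sqrt{p}$ the main term $np/\ell$ is $O(\sqrt{p})$, while any Chebotarev/equidistribution error over $U_{p}$ carries at least a factor $\sqrt{p}$ times a positive power of $\ell$ (through the size of $Sp_{2g}(\mathbf{F}_{\ell})$), so the error swamps the main term and no balancing is possible; your text in fact oscillates between ``$\ell>2g\sqrt{p}$'' and ``$\ell$ a small power of $p$'', and your final paragraph concedes that this balancing is unresolved. The point you are missing is that no exact mod-$\ell$ detection is needed: the Weil bound is used only with the prime $p$ itself, to convert $N_{C_{u}}(p)\equiv a_{i}\ [\bmod\ p]$ into the integer equality $a(C_{u},p)=1-a_{i}$; for the auxiliary primes one only needs the one-way implication that this equality forces the congruence modulo \emph{every} small $\ell$, so the sieve conditions can all be imposed with $\ell$ far below $\sqrt{p}$.

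Concretely, the paper bounds $\lvert D_{p}(\underline{a})\rvert$ (Proposition \ref{LargeSieve1}) by the large sieve for Frobenius \cite[Cor. 8.10]{KowBleu} over all squarefree moduli supported on primes $\ell\leq L$: $\lvert D_{p}\rvert\leq\bigl(p+C\sqrt{p}(L+1)^{2g^{2}+g+2}\bigr)H^{-1}$, where the local densities are bounded below by counting $p$-symplectic characteristic polynomials whose derivative at $0$ avoids the $n$ values $1-a_{i}$ (\cite[Lem. B.5]{KowBleu}), giving $\lvert\Omega_{\ell}\rvert/\lvert Sp_{2g}(\mathbf{F}_{\ell})\rvert\geq 1-(2g^{2}+g+1+n)/\ell+O(1/\ell^{2})$, and the Lau--Wu theorem \cite{LauWu} then yields $H\gg L^{2}(\log L)^{-1+1/(2g^{2}+g+1+n)}$. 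Choosing $L=p^{1/\gamma}$ with $\gamma=2(2g^{2}+g+2)=4g^{2}+2g+4$ exactly balances $p$ against $\sqrt{p}\,L^{2g^{2}+g+2}$ and gives $\lvert D_{p}\rvert\ll p^{1-2/\gamma}(\log p)^{1-2/(\gamma+2n-2)}$. Note that the peculiar logarithmic exponent $1-2/(\gamma+2n-2)$ originates in this multi-modulus step (it is $1-1/(2g^{2}+g+1+n)$ from Lau--Wu), not, as you suggest, in the later optimisation of $Q$. Without this (or an equivalent) bound on $\lvert D_{p}(\underline{a})\rvert$, your argument does not produce the stated exponents $\gamma/2$ and $\frac{\gamma}{2}\bigl(1-\frac{2}{\gamma+2n-2}\bigr)$ in $Q_{g}(T)$, nor the final estimate for the exceptional set.
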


The big monodromy hypothesis is a difficult condition to check, but it is known in certain cases.
For example the family considered in Theorem \ref{Th_leastPrime_f} has big monodromy.

\begin{ex}
Let $g\geq 2$ be an integer and let $f \in \mathbf{Z}[T]$ be a separable polynomial of degree $2g$.
For each $u\in\mathbf{Z}$ we consider the curve $C_{u}$ with affine model  
$$C_{u}: y^{2}= f(t)(t-u).$$
In our case the curve $U$ is the open subvariety of $\mathbb{A}^{1}_{\mathbf{Z}}$ 
given by the equation $f\neq 0$.
If indeed the polynomial $f(t)(t-u)$ is separable over $\mathbf{Q}$, then the curve $C_{u}/\mathbf{Q}$ is hyperelliptic of genus $g$.
Let $\widetilde{C_{u}}$ be the smooth projective compactification of $C_{u}$.
As $\deg_{t}f(t)(t-u)=2g+1$ is odd, the curve $\widetilde{C_{u}}$ has only one point at infinity.
Assume $\widetilde{C_{u}}$ has good reduction at a prime $p$, then:
$$N_{\widetilde{C_{u}}}(p) = p - a(\widetilde{C_{u}},p) +1$$
i.e.
$$N_{C_{u}}(p) = p - a(\widetilde{C_{u}},p),$$
where $\vert a(\widetilde{C_{u}},p)\vert \leq 2g\sqrt{p}$ by the Hasse--Weil bound.

Therefore for a prime of good reduction $p\geq \max\lbrace 4g^2, \alpha_{1},\ldots, \alpha_{n} \rbrace$ one has
$p\nmid\prod_{i=1}^{n} (N_{C_{u}}(p) - \alpha_{i} )$ if $a(\widetilde{C_{u}},p)\notin \lbrace -\alpha_{1},\ldots, -\alpha_{n}\rbrace$.
So we can apply Theorem \ref{Th_Smallprime} with the family $\widetilde{C}\rightarrow U$, 
and the integers $a_{i}=1+\alpha_{i}$. 

Furthermore, a theorem of Yu (\textit{e.g.} \cite[Prop. 8.13]{KowBleu} or another proof by Hall in \cite{Hall}) ensures that 
the image by $\rho_{\ell}$ of the \'{e}tale fundamental geometric group 
$\pi_{1}(U_{p}\times\overline{\mathbf{F}_{p}})$
is the symplectic group $Sp(2g,\mathbf{F}_{\ell})$.
Theorem \ref{Th_leastPrime_f} is then deduced from Theorem \ref{Th_Smallprime}.
\end{ex}

\begin{proof}[Proof of Theorem \ref{Th_Smallprime}]
A first step towards the proof of Theorem \ref{Th_Smallprime} is an analytic lemma about sums over primes.
\begin{lem}\label{Abel}
Let $\alpha >-1$ and $\beta\in\mathbf{R}$ then
 $$\sum_{p\leq L}p^{\alpha}\log(p)^{\beta}\sim \frac{L^{\alpha + 1}}{(\alpha +1)} \log(L)^{\beta-1}. $$
\end{lem}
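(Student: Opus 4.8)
The plan is to reduce the sum to an integral by partial summation and then feed in the Prime Number Theorem. Writing $p^{\alpha}\log(p)^{\beta} = h(p)\log p$ with $h(t) := t^{\alpha}\log(t)^{\beta-1}$, and letting $\theta(t) := \sum_{p\le t}\log p$ denote the Chebyshev function (whose jump at a prime $p$ is $\log p$), Riemann--Stieltjes integration gives
\[
\sum_{p\le L}p^{\alpha}\log(p)^{\beta} \;=\; \int_{3/2}^{L} h(t)\,d\theta(t).
\]
By the Prime Number Theorem one may write $\theta(t) = t + r(t)$ with $r(t) = o(t)$, so the right-hand side splits as $\int_{3/2}^{L} h(t)\,dt + \int_{3/2}^{L} h(t)\,dr(t)$; I would then estimate the two pieces separately.

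For the first integral, the hypothesis $\alpha > -1$ is precisely what makes both $\int_{3/2}^{L}h(t)\,dt$ and $\frac{L^{\alpha+1}}{\alpha+1}\log(L)^{\beta-1}$ tend to $+\infty$, so L'H\^opital's rule applies. Differentiating in $L$, the quotient of derivatives equals $\bigl(1 + \frac{\beta-1}{(\alpha+1)\log L}\bigr)^{-1}$, which tends to $1$; hence
\[
\int_{3/2}^{L}h(t)\,dt \;\sim\; \frac{L^{\alpha+1}}{\alpha+1}\log(L)^{\beta-1}.
\]

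The only mildly delicate point — and the step I would treat most carefully — is showing the error integral is $o\bigl(L^{\alpha+1}\log(L)^{\beta-1}\bigr)$. Integrating by parts, $\int_{3/2}^{L}h(t)\,dr(t) = h(L)r(L) - h(3/2)r(3/2) - \int_{3/2}^{L}h'(t)r(t)\,dt$, with $h'(t) = t^{\alpha-1}\log(t)^{\beta-2}\bigl(\alpha\log t + \beta - 1\bigr)$. The boundary term is $L^{\alpha}\log(L)^{\beta-1}\cdot o(L) = o\bigl(L^{\alpha+1}\log(L)^{\beta-1}\bigr)$ and $h(3/2)r(3/2)$ is a constant, so only $\int_{3/2}^{L}h'(t)r(t)\,dt$ remains. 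Given $\varepsilon > 0$, choose $t_{0}$ with $|r(t)|\le\varepsilon t$ for $t\ge t_{0}$; then $\int_{3/2}^{t_0}h'(t)r(t)\,dt$ is a constant and
\[
\Bigl|\int_{t_{0}}^{L}h'(t)r(t)\,dt\Bigr| \;\le\; \varepsilon\int_{t_{0}}^{L} t^{\alpha}\log(t)^{\beta-1}\,\frac{|\alpha\log t + \beta - 1|}{\log t}\,dt \;\ll\; \varepsilon\int_{3/2}^{L} t^{\alpha}\log(t)^{\beta-1}\,dt \;\ll\; \varepsilon\, L^{\alpha+1}\log(L)^{\beta-1},
\]
using that $\tfrac{|\alpha\log t + \beta - 1|}{\log t}$ is bounded on $[3/2,\infty)$ together with the asymptotic just proved (the implied constants being independent of $\varepsilon$). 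Dividing through by $L^{\alpha+1}\log(L)^{\beta-1}$, letting $L\to\infty$ and then $\varepsilon\to 0$, gives the $o$-estimate for the error integral, and combining it with the main term yields the claimed asymptotic. Apart from this $\varepsilon$-bookkeeping, the argument is the standard Abel-summation/L'H\^opital routine, and I foresee no serious obstacle.
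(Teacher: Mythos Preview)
Your proof is correct and follows essentially the same route as the paper: write $p^{\alpha}\log(p)^{\beta}=f(p)\log p$ with $f(t)=t^{\alpha}\log(t)^{\beta-1}$, apply Abel summation against $\theta(t)$, feed in the Prime Number Theorem $\theta(t)=t+o(t)$, and reduce to the asymptotic for $\int f(t)\,dt$. The paper's version is terser---it simply writes the error as $o\bigl([tf(t)]_{2}^{L}-\int_{2}^{L}tf'(t)\,dt\bigr)$ and asserts the integral asymptotic---whereas you justify the error via an explicit $\varepsilon$-splitting and the main term via L'H\^opital; but the underlying argument is the same.
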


\begin{proof}
 We use Abel summation and the function
 $$\theta(x):=\sum_{p\leq x}\log(p) = x + o(x)$$
 (by the Prime Number Theorem).
 Set $f: x\mapsto x^{\alpha}\log(x)^{\beta-1}$.
 One has
 \begin{eqnarray*}
  \sum_{p\leq L}p^{\alpha}\log(p)^{\beta} &=& [f(t)\theta(t)]_{2}^{L} - \int_{2}^{L}f'(t)\theta(t)dt \\
  &=& [tf(t)]_{2}^{L} - \int_{2}^{L}f'(t)tdt + o\left([tf(t)]_{2}^{L} - \int_{2}^{L}f'(t)tdt\right) \\
  &=& \int_{2}^{L}f(t)dt +o\left(\int_{2}^{L}f(t)dt\right) 
  = \frac{L^{\alpha +1}}{(\alpha +1)}\log(L)^{\beta-1} + o\left(L^{\alpha +1}\log(L+1)^{\beta-1}\right).
 \end{eqnarray*} 

 \end{proof}
 
Our first ingredient is a bound for the size of the sets $D_{p}(\underline{a})$ as defined in (\ref{def_ensD_p}).
 \begin{prop}\label{LargeSieve1}
 One has
 $$\vert D_{p}(\underline{a})\vert\ll p^{1-2/\gamma}\log(p)^{1-2/(\gamma + 2n -2)}$$
 where $\gamma=4g^{2}+2g+4$ and the implicit constant depends only on $g$.
\end{prop}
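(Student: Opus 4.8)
The plan is to run Kowalski's large sieve for Frobenius (\cite[Th.~8.15]{KowBleu}) on the family $\mathcal{C}_{p}\to U_{p}$, taking the auxiliary primes $\ell\neq p$ as the sieve support and then optimising its length. First I would rephrase membership in $D_{p}(\underline{a})$ as a sieve condition. Fix $\ell\neq p$: for every $u\in U_{p}(\mathbf{F}_{p})$ one has $a(C_{u},p)\equiv\tr(\rho_{\ell}(\Frob_{u}))\ [\bmod\ \ell]$, and by Poincaré duality together with the big monodromy hypothesis the class of $\rho_{\ell}(\Frob_{u})$ lies in the coset of $Sp(2g,\mathbf{F}_{\ell})$ inside $CSp(2g,\mathbf{F}_{\ell})$ formed of the elements of multiplicator $p$. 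Writing $G_{\ell}$ for the set of conjugacy classes in that coset and
\[
\Omega_{\ell}:=\{x\in G_{\ell}\ :\ \tr(x)\in\{1-a_{1},\dots,1-a_{n}\}\ [\bmod\ \ell]\},
\]
one gets $D_{p}(\underline{a})\subseteq\{u\in U_{p}(\mathbf{F}_{p})\ :\ \Frob_{u}\ [\bmod\ \ell]\in\Omega_{\ell}\text{ for all }\ell\neq p\}$; equivalently, $D_{p}(\underline{a})$ avoids $G_{\ell}\setminus\Omega_{\ell}$ for every $\ell$ in whatever support one picks.

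The second ingredient is a count of symplectic conjugacy classes with prescribed trace: for $\ell$ larger than a bound depending only on $g$ and for every $t\in\mathbf{F}_{\ell}$,
\[
\#\{x\in G_{\ell}\ :\ \tr(x)=t\}=|Sp(2g,\mathbf{F}_{\ell})|\left(\tfrac{1}{\ell}+O_{g}(\ell^{-3/2})\right),
\]
so that $|\Omega_{\ell}|/|G_{\ell}|=n/\ell+O_{g}(\ell^{-3/2})$, and in particular this density is $\leq\tfrac{1}{2}$ once $\ell$ is large. This is standard --- it follows from the character theory of $Sp(2g,\mathbf{F}_{\ell})$, or from Deligne's equidistribution theorem applied over the extensions $\mathbf{F}_{\ell^{k}}$ --- and I would just quote it.

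Now I would apply the large sieve with support the primes $\ell\leq L$, $\ell\neq p$, above that threshold, and with admissible classes the complements of the $\Omega_{\ell}$. Kowalski's estimate for the large sieve constant of a one-parameter family of smooth projective hyperelliptic curves of genus $g$ with symplectic geometric monodromy (\cite[Th.~8.15]{KowBleu}; the geometric input being Deligne's Riemann Hypothesis over finite fields, the bound $|U_{p}(\mathbf{F}_{p})|=p+O_{g}(\sqrt{p})$, and conductor bounds for the $\ell$-adic sheaves attached to the irreducible representations of $Sp(2g,\mathbf{F}_{\ell})$ and to their tensor products over squarefree moduli) gives, uniformly in $p$,
\[
\Delta\ \ll_{g}\ p+L^{\gamma},\qquad\gamma=4g^{2}+2g+4=2\dim Sp(2g)+4.
\]
Hence $|D_{p}(\underline{a})|\leq\Delta\,H^{-1}$ with $H=\sum_{d}\prod_{\ell\mid d}(|G_{\ell}|-|\Omega_{\ell}|)/|\Omega_{\ell}|$, the sum running over squarefree $d\leq L$ coprime to $p$ all of whose prime factors lie above the threshold.

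It remains to bound $H$ from below and to choose $L$. Each factor equals $(\ell/n)(1+O_{g}(\ell^{-1/2}))-1$, so the Dirichlet series of the summand has the same singularity at $s=2$ as $\zeta(s-1)^{1/n}$; a Selberg--Delange type estimate --- for which the prime-only terms, handled by Lemma~\ref{Abel} with $\alpha=1$, already give the right order up to the logarithmic factor --- yields $H\gg_{g}L^{2}$ divided by a power of $\log L$. Choosing $L$ of order $p^{1/\gamma}$ to balance the two terms of $\Delta$ makes $\Delta\ll_{g}p$ and produces a bound of the form $p^{1-2/\gamma}$ times a power of $\log p$; carrying the logarithmic bookkeeping through the optimisation gives the exponent $1-\tfrac{2}{\gamma+2n-2}$ and keeps every implied constant depending only on $g$. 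The one genuinely hard step is the geometric estimate $\Delta\ll_{g}p+L^{\gamma}$ --- this is where Deligne's theorem and the conductor bounds for symplectic representations are essential --- but here it is imported wholesale from \cite{KowBleu}, so what actually has to be done is the reduction above, the trace count, and the analytic optimisation.
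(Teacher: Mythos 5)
Your proposal is, in substance, the paper's own argument: you reduce membership in $D_{p}(\underline{a})$ to a sieve condition at every auxiliary prime $\ell$ (legitimate because $a(C_{u},p)$ is an integer, so its reduction lies in the prescribed trace set modulo every $\ell$ simultaneously), you invoke Kowalski's large sieve for Frobenius with the big symplectic monodromy input to control the sieve constant, you lower-bound $H$ by a Lau--Wu/Selberg--Delange mean value over squarefree moduli, and you optimise at $L\asymp p^{1/\gamma}$; this is exactly the route of the paper, whose constant from \cite[Cor.~8.10]{KowBleu} is $\Delta\ll_{g} p+\sqrt{p}\,L^{\gamma/2}$, which your weaker form $p+L^{\gamma}$ bounds from above and which balances at the same $L$. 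The one substantive difference is the local ingredient: you assert the two-sided equidistribution $\#\{x:\tr(x)=t\}=\lvert Sp_{2g}(\mathbf{F}_{\ell})\rvert(\ell^{-1}+O_{g}(\ell^{-3/2}))$ on the multiplicator-$p$ coset and propose to quote it, whereas the paper only needs, and only establishes, a one-sided bound via Kowalski's Lemma B.5 (matrices with prescribed $p$-symplectic characteristic polynomial) plus a count of such polynomials, yielding the allowed-class density $\delta(\ell)=1-(2g^{2}+g+1+n)/\ell+O_{g}(\ell^{-2})$. Your quoted count is plausible and essentially available (Lang--Weil applied to the trace fibres, or the characteristic-polynomial estimates in Kowalski's Appendix B), but ``Deligne's equidistribution theorem over $\mathbf{F}_{\ell^{k}}$'' is not the right tool to cite, and for the sieve all you need is the upper bound $\lvert\Omega_{\ell}\rvert\ll_{g} n\lvert G_{\ell}\rvert/\ell$, so you should cite that instead. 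Finally, a bookkeeping point: with your density $n/\ell+O_{g}(\ell^{-3/2})$ the mean-value step gives $H\gg_{g}L^{2}(\log L)^{-1+1/n}$ and hence the bound $p^{1-2/\gamma}\log(p)^{1-1/n}$, which is stronger than the statement; the exponent $1-2/(\gamma+2n-2)=1-1/(2g^{2}+g+1+n)$ is an artifact of the paper's weaker density constant $2g^{2}+g+1+n$ coming from Lemma B.5, so it does not actually ``come out of the optimisation'' in your setup --- but since your version is stronger, the Proposition follows all the same.
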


\begin{proof}
The proof follows by adapting \cite[Th. 8.15]{KowBleu}.
By \cite[Cor. 8.10]{KowBleu} there exists a constant $C\geq 0$ such that
$$\lvert D_{p}\rvert \leq (p+C\sqrt{p}(L+1)^{2g^{2} + g + 2})H^{-1},$$
where $$H=\sum_{m\in\mathcal{L}}\prod_{\ell\mid m}\frac{\lvert\Omega_{\ell}\rvert}
{\lvert Sp_{2g}(\mathbf{F}_{\ell})\rvert-\lvert\Omega_{\ell}\rvert}$$
and $\mathcal{L}$ is the set of squarefree numbers $m$ satisfying $\prod_{\ell\mid m}(\ell+1)\leq L+1$.
The parameter $L$ will be chosen later.

In our situation we have 
$$\Omega_{\ell}=\left\lbrace g \in GSp_{2g}(\mathbf{F}_{\ell}), m(g)=p, \tr(g)\notin \lbrace 1-a_{1},\ldots, 1- a_{n}\rbrace \right\rbrace.$$
Writing the condition on the trace in terms of characteristic polynomial we get
$$\vert\Omega_{\ell}\vert= \sum_{f\in\mathbf{F}_{\ell}[T], f'(0)\notin \lbrace 1-a_{1},\ldots, 1- a_{n}\rbrace}
\vert\lbrace g\in GSp_{2g}(\mathbf{F}_{\ell}), m(g)=p, \det(T-g)=f \rbrace\vert.$$
We use the lower bound for the cardinality of a set of matrices with fixed characteristic polynomial in \cite[Lem. B.5]{KowBleu}:
$$\vert\Omega_{\ell}\vert\geq \vert\lbrace f\in\mathbf{F}_{\ell}[T], p\text{-symplectic of degree } 2g, f'(0)\notin \lbrace 1-a_{1},\ldots, 1- a_{n}\rbrace \rbrace\vert \frac{\vert Sp_{2g}(\mathbf{F}_{\ell})\vert}{\ell^{g}}\left(\frac{\ell}{\ell+1}\right)^{2g^{2}+g+1}.$$
Here $f$ is said to be $p$-symplectic of degree $2g$ if it is a monic polynomial of degree $2g$ satisfying $T^{2g}f(\frac{p}{T})=p^{g}f(T)$.
By counting the number of such polynomials we deduce for $\ell >n$:
$$\frac{\vert\Omega_{\ell}\vert}{\vert Sp_{2g}(\mathbf{F}_{\ell})\vert}
\geq \delta(\ell):=\frac{\ell-n}{\ell}\left(\frac{\ell}{\ell+1}\right)^{2g^{2}+g+1}= 1 - \frac{2g^{2}+g+1+n}{\ell} + O_{g}\left(\frac{1}{\ell^{2}}\right).$$

Set for $m$ in $\mathcal{L}$, 
$f(m):=\frac{1}{m}\prod_{\ell\mid m}\frac{\delta(\ell)}{1-\delta(\ell)}.$
One has
$$
H \geq  \sum_{m\in\mathcal{L}}mf(m) 
\geq  \frac{L}{2}\sum_{m\in\mathcal{L}, m\geq L/2}f(m).
$$
Moreover for every prime $\ell$ one has
$$
f(\ell) = \frac{1}{\ell} \frac{1 - \frac{2g^{2}+g+1+n}{\ell}+O\left(\frac{1}{\ell^{2}}\right)}
 {1-\left(1 - \frac{2g^{2}+g+1+n}{\ell} +O\left(\frac{1}{\ell^{2}}\right)\right)} 
=  \frac{1}{2g^{2}+g+1+n} + O_{g}\left(\frac{1}{\ell}\right).
$$
Thanks to a theorem of Lau and Wu (\cite{LauWu} see \textit{e.g.} \cite[Th. G.2]{KowBleu} for the particular case we need) we have
$$H \gg L^{2}\log(L)^{-1+1/(2g^{2}+g+1+n)}$$
with an implicit constant depending on $g$.

Hence one has:
$$\lvert D_{p}\rvert \ll_{g} (p+\sqrt{p}(L+1)^{2g^{2} + g + 2})L^{-2}\log(L)^{1-1/(2g^{2}+g+1+n)}.$$
Choosing $L=p^{1/(4g^{2} + 2g + 4)}=p^{1/\gamma}$ 
such that both terms on the right hand side have the same order of magnitude we obtain as wished:
$$\lvert D_{p}\rvert \ll p^{1-2/\gamma}\log(p)^{1-2/(\gamma + 2n -2)},$$
the implicit constant depending on $g$ only.

\end{proof}

\begin{Rk}
In the situation of Proposition \ref{LargeSieve1} the density of the set $\Omega_{\ell}$ gets closer to $1$ as $\ell$ grows.
It is slightly better than necessary for the large sieve: 
usually we just need to have an absolute lower bound for the density. 
\end{Rk}

We can now finish the proof of Theorem \ref{Th_Smallprime}.

First note that
$$S(T,Q)\leq \lvert \lbrace u \in U(\mathbf{Z}), \lVert u \rVert \leq T, \forall p<Q, 
 u\ [\bmod\ p] \in D_{p})  \rbrace\rvert,$$
where for a point $u \in \mathbb{A}^{d}(\mathbf{Z})$ 
we set $\lVert u \rVert = \max\lbrace \lvert u_{1}\rvert,\ldots,\lvert u_{d}\rvert\rbrace$.
We apply \cite[Th. 3.4]{Zywina} in the case $k=\mathbf{Q}$.
We deduce
$$S(T,Q) \leq \frac{\sum_{p\leq Q}\log(p)}{\sum_{p\leq Q}\frac{\log(p)}{\nu(p)} - \log(2T)}$$
(as soon as the denominator is positive) where $\nu(p)$ is the size of $D_{p}$.
Proposition \ref{LargeSieve1} yields $$\nu(p)\leq K_{g}p^{1-2/\gamma}\log{p}^{1-2/(\gamma + 2n-2)}$$
for some constant $K_{g}$ depending only on $g$.
Hence using Lemma \ref{Abel} we get
$$S(T,Q) \ll \frac{Q}{\frac{\gamma}{2K_{g}} Q^{2/\gamma}(\log(Q))^{-1+2/(\gamma + 2n-2)} - \log(2T)}$$
where the implicit constant is absolute.

Let us choose $Q=\left(2K_{g}\log(T)\right)^{\gamma/2}
(\log(2K_{g}\log(T)))^{\frac{\gamma}{2}\left(1 - \frac{2}{\gamma + 2n - 2}\right) }$ 
the denominator is then of size 
$$\left(\gamma\left(\frac{\gamma}{2}\right)^{-1+2/(\gamma +2n -2)}-1\right)\log(T)> 
(\sqrt{2} - 1)\log(T).$$ 
Putting everything together we obtain
$$S(T,Q) \ll (2K_{g})^{\gamma/2}\log(T)^{-1 + \gamma/2}
(\log(2K_{g}\log(T)))^{\frac{\gamma}{2}\left(1 - \frac{2}{\gamma + 2n - 2}\right) }$$
with an absolute implicit constant.

\end{proof}

\section{Concluding remarks and explicit examples}

\subsection{Curves with large least prime.}\label{Sec_bigPrime}

The result of the previous section leads us to think that for a generic hyperelliptic curve $C$ 
the least element of $\lbrace p\in\mathcal{P}, p\nmid N_{C}(p)\rbrace$ is quite small.
What about hyperelliptic curves for which the least ordinary prime is arbitrarily large?
The idea underlying Theorem \ref{Th_Smallprime} is that 
if the genus of the curve and the coefficients of every equation defining it are bounded, 
we should not find a too large least element of $\lbrace p\in\mathcal{P}, p\nmid N_{C}(p)\rbrace$.

\subsubsection{} A first idea one might have is to let the genus grow.
Let $q$ be a prime number,
and let $C_{q}$ be the affine hyperelliptic plane curve given by the equation:
$$C_{q}: y^{2}= x^{q}+1.$$
Then for every prime $p\notin \lbrace 2, q\rbrace$,
the curve $C_{q}/\mathbf{F}_{p}$ is smooth.
Furthermore, if $p\neq 1\ [\bmod\ q]$ then $x\mapsto x^{q}+1$ is bijective in $\mathbf{F}_{p}$
hence $N_{C_{q}}(p)=p$.
Thus $N_{C_{q}}(p)=p$ for every prime $p<2q+1$.

The bound $2q+1$ is sharp if $2q+1$ is a prime, but it could be composite.
If it is so, one has $N_{C_{q}}(p)=p$ for every $p<4q+1$, and we iterate the process if $4q+1$ is composite.
We are interested in finding primes $q$ 
for which the least prime $p\equiv 1\ [\bmod\ q]$ is unusually large. 
More precisely, for a large fixed $N$ we would like to find the least prime $q$ for which
the least prime $p\equiv 1\ [\bmod\ q]$ is greater than $N$.

\begin{ex}
As an example we have looked for a curve for which the least ordinary prime is greater than $100$.
The least prime congruent to $1$ modulo $17$ is $6\times17+1=103$, and one has $N_{C_{17}}(103)=87$.
Thus for every prime $p <103$ of good reduction for $C_{17}$ one has
$N_{C_{17}}(p)=p$.
\end{ex}

\begin{ex}
For $N=10000$, one can choose the prime $q=457$ as $457\times30+1=13711$ is the least prime in the congruence class $1\ [\bmod\ q]$. One has $N_{C_{457}}(13711)=13255$.
Hence $N_{C_{457}}(p)=p$ for every prime $p<13711$.
\end{ex}

\subsubsection{} We now allow the coefficients to grow, fixing the genus equal to $2$.
Let $N$ be a fixed positive integer. 
For each prime $p<N$ we should be able to find a polynomial $f_{p}\in\mathbf{F}_{p}[X]$ of degree $5$,
such that the curve $y^{2} = f_{p}(x)$ has exactly $p$ points in $\mathbf{F}_{p}$.
The existence of a hyperelliptic projective curve of genus $2$ with $p+1$ points in $\mathbf{F}_{p}$ is given by \cite[Th. 1.2]{HNR}, we choose an open affine subscheme so that there is one point at infinity.
Then using the Chinese Remainder Theorem, 
there exists a polynomial $f \in \mathbf{Z}[X]$ such that $f\equiv f_{p}\ [\bmod\ p]$ for every $p<N$.
The least ordinary prime for the curve $y^{2}=f(x)$ is larger than $N$.

\begin{ex}\label{ExCurveLargePrimeg2}
Let $C_{1}$ and $C_{2}$ be the affine hyperelliptic plane curves of genus $2$ given by the equation:
$C_{1}: y^{2}=x^{5} + 5x^{3} + 5x$ and $C_{2} : y^{2}= x^{5} +x$.
Using \texttt{sage} we see that for every $p<401$, one has either $N_{C_{1}}(p)=p$ or $N_{C_{2}}(p)=p$.
Hence there exists a curve $C$ of genus $2$ such that $N_{C}(p)=p$ for every $p<401$. 
\end{ex}

\subsection{Counter-example to the converse of Corollary \ref{ANumb} }\label{sec_nonEx}

We generalize the ideas of the previous section to surfaces.
We can in fact find surfaces for which there is no prime $p$ of good reduction satisfying 
$p\nmid N_{X}(p)$ even though they have non-vanishing $A$-number.

The first part of Corollary \ref{ANumb} has already been proved by Katz in the case of an affine smooth hypersurface of $\mathbb{A}^{3}$ (see \cite[Rem. (ii) p. 150]{KaES}).
More precisely, Katz states that if $X$ is a smooth projective surface in $\mathbb{P}^{3}$ 
defined by a homogeneous polynomial of degree $D$ 
then $V:=X\cap \mathbb{A}^{3}$ satisfies $A(V,0,\mathbf{F}_{p^{\alpha}},\psi)=D(D-1)^{2}$ 
for all $p\nmid D$, for all $\alpha\geq 1$, 
and for every additive character $\psi$ of $\mathbf{F}_ {p^{\alpha}}$.
Using this we now show that the converse of the first part of Corollary \ref{ANumb} is false.  

Let $S: y^{2}=f(x,t)$ be an affine elliptic surface defined over $\mathbf{Z}$, 
where $f(x,t)$ is a polynomial in $\mathbf{Z}[X,T]$ satisfying $\deg_{X}f=3$.
Suppose $f(x,t)= ax^{3} + b(t)x^{2}+ c(t)x + d(t)$ 
with $a\in \mathbf{Z}-\lbrace 0\rbrace$ and $b, c, d \in \mathbf{Z}[T]$ 
of degree respectively bounded by $1, 3, 5$.

Let $p$ be a prime, one has
$$N_{S}(p)=\sum_{(x,t) \in \mathbf{F}_{p}^{2}}\left( 1+\chi_{p}(f(x,t))\right)$$
where $\chi_{p}$ is the Legendre character modulo $p$.

\begin{prop}\label{NonEx}
 For every prime $p \neq 2$, one has $$N_{S}(p)=0\ [\bmod\ p].$$
 However if $\deg(c)\leq2$ and $\deg(d)\leq3$ then
 $A(S,0,\mathbf{F}_{p^{\alpha}},\psi)=12$ for all $p\nmid 3a$, for all $\alpha\geq 1$, and for all additive characters $\psi$ of $\mathbf{F}_ {p^{\alpha}}$.
\end{prop}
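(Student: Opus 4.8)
The proposition makes two unrelated assertions, which I would establish independently. For the congruence, the plan is to replace the Legendre symbol by a $\tfrac{p-1}{2}$-th power. Since $p\neq 2$, Euler's criterion gives $\chi_p(u)\equiv u^{(p-1)/2}\ [\bmod\ p]$ for every $u\in\mathbf{F}_p$ (including $u=0$), so from $N_S(p)=p^2+\sum_{(x,t)\in\mathbf{F}_p^2}\chi_p(f(x,t))$ and $p^2\equiv 0$ we get
$$N_S(p)\equiv \sum_{(x,t)\in\mathbf{F}_p^2}f(x,t)^{(p-1)/2}\ [\bmod\ p].$$
Expanding $f(x,t)^{(p-1)/2}=\sum_{e,m}g_{e,m}\,x^et^m$ in $\mathbf{F}_p[x,t]$ and using $\sum_{u\in\mathbf{F}_p}u^k\equiv-1\ [\bmod\ p]$ when $p-1\mid k$ and $k\geq 1$, and $\equiv 0$ otherwise, the monomial $x^et^m$ contributes to the double sum modulo $p$ only when both $e$ and $m$ are positive multiples of $p-1$. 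The claim then reduces to showing this never occurs.

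The degree count is the technical core of the first assertion. Assume first $p\nmid a$. Then $\deg_xf^{(p-1)/2}\leq\tfrac32(p-1)<2(p-1)$, so the only relevant value is $e=p-1$. Writing $f=ax^3+b(t)x^2+c(t)x+d(t)$ and expanding by the multinomial theorem, a term $\binom{(p-1)/2}{i,j,k,l}a^i b(t)^j c(t)^k d(t)^l\,x^{3i+2j+k}$ feeds the coefficient of $x^{p-1}$ exactly when $3i+2j+k=p-1$ and $i+j+k+l=\tfrac{p-1}{2}$; solving these for $k$ and $l$ and using $\deg_tb\leq 1$, $\deg_tc\leq 3$, $\deg_td\leq 5$, the $t$-degree of such a term is at most $j+3k+5l=\tfrac{p-1}{2}+i$, and $k,l\geq 0$ forces $i\leq\tfrac13(p-1)$, so this is at most $\tfrac56(p-1)<p-1$. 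Hence the coefficient of $x^{p-1}$ in $f^{(p-1)/2}$, read as a polynomial in $t$, has no term $t^m$ with $m$ a positive multiple of $p-1$, and the double sum vanishes modulo $p$. If instead $p\mid a$, the same argument applies with $\deg_xf\leq 2$, the coefficient of $x^{p-1}$ being $b(t)^{(p-1)/2}$, of $t$-degree $\leq\tfrac12(p-1)$. The obstacle to anticipate is that a naive total-degree estimate is useless — $d(t)$ may have degree $5$, so $f^{(p-1)/2}$ can have total degree $\tfrac52(p-1)$ — and it is only the uneven weighting of $x$ against $t$ that makes the count work.

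For the $A$-number, the plan is to realise $S$ as the affine part of a smooth projective cubic surface and then quote Katz's formula. Under the extra hypotheses $\deg c\leq 2$, $\deg d\leq 3$, the polynomial $y^2-f(x,t)$ has total degree $3$; writing $b(t)=b_1t+b_0$, $c(t)=c_2t^2+\cdots$, $d(t)=d_3t^3+\cdots$, its leading form is $-(ax^3+b_1tx^2+c_2t^2x+d_3t^3)$ with $a\neq 0$. Hence the projective closure $X:=\overline{S}\subset\mathbb{P}^3$, in homogeneous coordinates $[x:t:y:w]$, is a cubic surface and $S=X\cap\mathbb{A}^3$. I would then check that $X/\mathbf{F}_p$ is smooth for $p\nmid 3a$: a Jacobian computation shows every singular point of $X$ must lie on the plane at infinity $\{w=0\}$, that $X$ is automatically smooth at $[0:0:1:0]$ (there $\partial_w F\neq 0$), and that smoothness at the remaining points of $X\cap\{w=0\}$ comes down to a non-degeneracy property of the leading form; this last case analysis — where the standing hypotheses on $S$ (that $S/\mathbf{C}$ is a smooth surface, part of what "elliptic surface" means, together with $p\nmid a$) are used — is the one point needing genuine care. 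Granting this, \cite[Rem. (ii) p. 150]{KaES} applies with $D=3$ (the condition $p\nmid D$ accounting for the remaining factor in $p\nmid 3a$) and gives $A(S,0,\mathbf{F}_{p^\alpha},\psi)=D(D-1)^2=12$ for all $p\nmid 3a$, all $\alpha\geq 1$, and all additive characters $\psi$ of $\mathbf{F}_{p^\alpha}$.
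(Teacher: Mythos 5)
Your treatment of the congruence is correct and is essentially the paper's own proof: Euler's criterion, the multinomial expansion of $f^{(p-1)/2}$, the vanishing of $\sum_{u\in\mathbf{F}_p}u^{k}$ modulo $p$ unless $k$ is a positive multiple of $p-1$, and a weighted degree count in $t$. Your identity $j+3k+5l=\tfrac{p-1}{2}+i$ together with $i\le\tfrac{p-1}{3}$ is the same estimate as the paper's bound $k+2\ell+2m<2(k+\ell+m)=p-1$, and your separate case $p\mid a$ is harmless (the paper avoids it by expanding with integer coefficients before reducing).

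For the $A$-number the paper's proof contains no argument at all: the claim is treated as an immediate application of the formula of Katz quoted just before the proposition, with $D=3$ because under $\deg c\le2$, $\deg d\le3$ the closure of $S$ in $\mathbb{P}^3$ is a cubic surface. Your route is the same, and your instinct to verify Katz's hypotheses goes beyond what the paper writes; but the verification you defer --- smoothness of $X/\mathbf{F}_p$ for every $p\nmid 3a$ --- is exactly the step that cannot be completed as described. First, singular points need not lie at infinity: the affine surface $y^2=f(x,t)$ is singular modulo every prime of bad reduction, and smoothness of $S/\mathbf{C}$ excludes only a finite set of primes which is not controlled by $3a$. Second, with $F=y^2w-\tilde f$ and $\tilde f=f_3+wf_2+w^2f_1+w^3f_0$ the homogenization, the locus at infinity is the union of lines $\lbrace f_3(x,t)=0,\ w=0\rbrace$, and all four partials of $F$ vanish at a point $[x_0:t_0:y_0:0]$ whenever $(x_0:t_0)$ is a repeated root of the leading form $f_3=ax^3+b_1x^2t+c_2xt^2+d_3t^3$ modulo $p$ and $y_0^2=f_2(x_0,t_0)$; such a point exists over $\overline{\mathbf{F}_p}$ as soon as $p$ divides the discriminant of $f_3$, a condition not implied by $p\nmid 3a$. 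So an honest completion of your sketch only yields the conclusion for $p$ outside a finite set that is in general strictly larger than the divisors of $3a$; the paper sidesteps this by invoking \cite[Rem. (ii) p. 150]{KaES} directly, and any justification of the range ``for all $p\nmid 3a$'' has to come from the precise hypotheses of that remark rather than from a mod-$p$ smoothness check of the kind you propose.
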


The proof is inspired by \cite[proof of Th. 8.2]{HLK}, 
and is comparable to Chevalley--Warning's Theorem.
We first state the following lemma.
\begin{lem}\label{Hua}
 Let $p$ be an odd prime and let $c$ be an integer non-divisible by $p-1$, then
 $\sum_{x\in\mathbf{F}_{p}}x^{c} = 0\ [\bmod\ p].$
 In particular for every polynomial $P$ with integer coefficients of degree bounded by $p-2$,
 $$\sum_{x\in\mathbf{F}_{p}}P(x) = 0\ [\bmod\ p].$$
\end{lem}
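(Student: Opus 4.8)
The plan is to exploit the action of $\mathbf{F}_{p}^{\times}$ on itself by multiplication. First I would note that it suffices to treat the case $1\leq c\leq p-2$ (this is the only case needed below, and in it the $x=0$ summand is zero and may be discarded), and set $S:=\sum_{x\in\mathbf{F}_{p}^{\times}}x^{c}$. For any $t\in\mathbf{F}_{p}^{\times}$ the substitution $x\mapsto tx$ permutes $\mathbf{F}_{p}^{\times}$, whence $S=\sum_{x}(tx)^{c}=t^{c}S$, that is $(t^{c}-1)S=0$ in $\mathbf{F}_{p}$. Since $\mathbf{F}_{p}^{\times}$ is cyclic of order $p-1$ and $(p-1)\nmid c$, the homomorphism $t\mapsto t^{c}$ is not identically $1$ (its image has size $(p-1)/\gcd(c,p-1)>1$) --- this is exactly where the hypothesis is used --- so there is $t$ with $t^{c}\neq1$; as $\mathbf{F}_{p}$ is a field this forces $S=0$, which is the first assertion. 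Equivalently, choosing a primitive root $g$ modulo $p$ one writes $S=\sum_{k=0}^{p-2}(g^{c})^{k}$ and sums the geometric series, whose numerator $(g^{c})^{p-1}-1$ vanishes while $g^{c}-1\neq0$.

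For the ``in particular'' part I would write $P=\sum_{j=0}^{p-2}a_{j}T^{j}$ with $a_{j}\in\mathbf{Z}$, reduce modulo $p$, and interchange the two summations to get $\sum_{x\in\mathbf{F}_{p}}P(x)\equiv\sum_{j=0}^{p-2}a_{j}\sum_{x\in\mathbf{F}_{p}}x^{j}\ [\bmod\ p]$. For each $j$ with $1\leq j\leq p-2$ one has $(p-1)\nmid j$, so the inner sum vanishes modulo $p$ by the first part; the remaining term $j=0$ contributes $a_{0}\cdot p\equiv0\ [\bmod\ p]$. Hence the whole sum is $\equiv 0\ [\bmod\ p]$.

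There is essentially no hard step here: this is the classical vanishing of power sums over a finite field, and the only points that call for a little care are bookkeeping --- handling the $x=0$ summand and the $j=0$ term correctly, and invoking the hypothesis in the precise form ``$t\mapsto t^{c}$ is nontrivial on $\mathbf{F}_{p}^{\times}$''.
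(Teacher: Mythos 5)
Your proof is correct and follows essentially the same route as the paper: the paper's proof is exactly your parenthetical variant, writing $\sum_{x}x^{c}=\sum_{v=0}^{p-2}g^{cv}$ for a primitive root $g$ and summing the geometric series, then handling the constant term via $\sum_{x\in\mathbf{F}_{p}}1\equiv0\ [\bmod\ p]$. Your averaging argument ($S=t^{c}S$ with $t^{c}\neq1$) is an equivalent reformulation of the same idea, and your bookkeeping for the $x=0$ and $j=0$ terms is fine.
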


\begin{proof}
Let $g$ be a generator of $\mathbf{F}_{p}^{*}$, then 
$$\sum_{x\in\mathbf{F}_{p}}x^{c} = \sum_{v=0}^{p-2}g^{cv}= \frac{1-g^{c(p-1)}}{1-g^{c}} =0\ [\bmod\ p].$$
Moreover one has
$\sum_{x\in\mathbf{F}_{p}}1 =0\ [\bmod\ p],$
hence the second part of the lemma follows.
\end{proof}

\begin{proof}[Proof of Proposition \ref{NonEx}]
For all $(x,t) \in \mathbf{F}_{p}^{2}$, one has
\begin{eqnarray*}
 \chi_{p}(f(x,t)) &=& f(x,t)^{\frac{p-1}{2}} \ [\bmod\ p]\\
 &=& \sum_{k=0}^{\frac{p-1}{2}}\sum_{\ell=0}^{k}\sum_{m=0}^{\ell}
 \binom{\frac{p-1}{2}}{k}\binom{k}{\ell}\binom{\ell}{m}a^{\frac{p-1}{2}-k}x^{3(\frac{p-1}{2} -k)+2(k-\ell)+(\ell-m)}b(t)^{k-\ell}c(t)^{\ell-m}d(t)^{m} \ [\bmod\ p]\\
\end{eqnarray*}

Let us fix $k, \ell, m$ and sum over $x$. It yields the sum
\begin{equation}\label{sumx}
\sum_{x\in\mathbf{F}_{p}}x^{3(\frac{p-1}{2} -k)+2(k-\ell)+(\ell-m)} = \sum_{x\in\mathbf{F}_{p}}x^{\frac{3(p-1)}{2} -k-\ell-m}.
\end{equation}
Using Lemma \ref{Hua}, the sum (\ref{sumx}) is zero modulo $p$ unless $\frac{3(p-1)}{2} -k-\ell-m$ is in $(p-1)\mathbf{Z}-\lbrace 0\rbrace$.
As $k, \ell, m \geq 0$, one has
$$\frac{3(p-1)}{2} -k-\ell-m < 2(p-1),$$
thus the sum (\ref{sumx}) is non-zero only if $k+\ell+m=\frac{p-1}{2}$.

In the case $k+\ell+m=\frac{p-1}{2}$, we get, summing over $t$,
\begin{equation}\label{sumt}
\sum_{t\in\mathbf{F}_{p}}b(t)^{k-\ell}c(t)^{\ell-m}d(t)^{m}=\sum_{t\in\mathbf{F}_{p}}P(t)
\end{equation}
where the polynomial $P$ has integer coefficients and has degree at most 
$$(k-\ell) + 3(\ell-m) + 5m = k+2\ell+2m < 2(k+\ell+m)= p-1$$
since $k>0$.
By Lemma \ref{Hua} the sum (\ref{sumt}) is zero modulo $p$.

Thus for every triple $(k,\ell,m)$, one has
$$\sum_{x\in\mathbf{F}_{p}}\sum_{t\in\mathbf{F}_{p}}x^{3(\frac{p-1}{2} -k)+2(k-\ell)+(\ell-m)}b(t)^{k-\ell}c(t)^{\ell-m}d(t)^{m} = 0\ [\bmod\ p]$$
hence
$$\sum_{x\in\mathbf{F}_{p}}\sum_{t\in\mathbf{F}_{p}}\chi_{p}(f(x,t))= 0\ [\bmod\ p].$$
\end{proof}

\bibliographystyle{plain} 
\bibliography{biblio}

\begin{thebibliography}{10}

\bibitem{ABBVA}
A.~Auel, M.~Bernardara, M.~Bolognesi, and A.~V{\'a}rilly-Alvarado.
\newblock Cubic fourfolds containing a plane and a quintic del {P}ezzo surface.
\newblock {\em Algebr. Geom.}, 1(2):181--193, 2014.

\bibitem{SageBook}
A.~Casamayou, N.~Cohen, G.~Connan, T.~Dumont, L.~Fousse, et~al.
\newblock {\em Calcul math\'ematique avec Sage}.
\newblock CreateSpace, 2013.
\newblock inria-00540485v2.

\bibitem{SGA4.5}
P.~Deligne.
\newblock {\em Cohomologie \'etale}.
\newblock Lecture Notes in Mathematics, Vol. 569. Springer-Verlag, Berlin-New
  York, 1977.
\newblock S{\'e}minaire de G{\'e}om{\'e}trie Alg{\'e}brique du Bois-Marie SGA
  4$\frac{1}{2}$, Avec la collaboration de J. F. Boutot, A. Grothendieck, L.
  Illusie et J. L. Verdier.

\bibitem{Dimca}
A.~Dimca.
\newblock {\em Singularities and topology of hypersurfaces}.
\newblock Universitext. Springer-Verlag, New York, 1992.

\bibitem{EEHK}
J.~S. Ellenberg, C.~Elsholtz, C.~Hall, and E.~Kowalski.
\newblock Non-simple abelian varieties in a family: geometric and analytic
  approaches.
\newblock {\em J. Lond. Math. Soc. (2)}, 80(1):135--154, 2009.

\bibitem{FouKa}
E.~Fouvry and N.~Katz.
\newblock A general stratification theorem for exponential sums, and
  applications.
\newblock {\em J. Reine Angew. Math.}, 540:115--166, 2001.

\bibitem{GroGB1}
A.~Grothendieck.
\newblock Le groupe de {B}rauer. {I}. {A}lg\`ebres d'{A}zumaya et
  interpr\'etations diverses.
\newblock In {\em Dix {E}xpos\'es sur la {C}ohomologie des {S}ch\'emas}, pages
  46--66. North-Holland, Amsterdam; Masson, Paris, 1968.

\bibitem{Hall}
C.~Hall.
\newblock Big symplectic or orthogonal monodromy modulo {$l$}.
\newblock {\em Duke Math. J.}, 141(1):179--203, 2008.

\bibitem{Hart}
R.~Hartshorne.
\newblock {\em Algebraic geometry}.
\newblock Springer-Verlag, New York-Heidelberg, 1977.
\newblock Graduate Texts in Mathematics, No. 52.

\bibitem{HNR}
E.~W. Howe, E.~Nart, and C.~Ritzenthaler.
\newblock Jacobians in isogeny classes of abelian surfaces over finite fields.
\newblock {\em Ann. Inst. Fourier (Grenoble)}, 59(1):239--289, 2009.

\bibitem{HLK}
L.~K. Hua.
\newblock {\em Introduction to number theory}.
\newblock Springer-Verlag, Berlin-New York, 1982.
\newblock Translated from the Chinese by Peter Shiu.

\bibitem{KaES}
N.~M. Katz.
\newblock {\em Sommes exponentielles}, volume~79 of {\em Ast\'erisque}.
\newblock Soci\'et\'e Math\'ematique de France, Paris, 1980.

\bibitem{KaPES2}
N.~M. Katz.
\newblock Perversity and exponential sums. {II}. {E}stimates for and
  inequalities among {$A$}-numbers.
\newblock In {\em Barsotti {S}ymposium in {A}lgebraic {G}eometry ({A}bano
  {T}erme, 1991)}, volume~15 of {\em Perspect. Math.}, pages 205--252. Academic
  Press, San Diego, CA, 1994.

\bibitem{KL}
N.~M. Katz and G.~Laumon.
\newblock Transformation de {F}ourier et majoration de sommes exponentielles.
\newblock {\em Inst. Hautes \'Etudes Sci. Publ. Math.}, (62):361--418, 1985.

\bibitem{Kol}
J.~Koll{\'a}r.
\newblock {\em Lectures on resolution of singularities}, volume 166 of {\em
  Annals of Mathematics Studies}.
\newblock Princeton University Press, Princeton, NJ, 2007.

\bibitem{KowBleu}
E.~Kowalski.
\newblock {\em The large sieve and its applications}, volume 175 of {\em
  Cambridge Tracts in Mathematics}.
\newblock Cambridge University Press, Cambridge, 2008.
\newblock Arithmetic geometry, random walks and discrete groups.

\bibitem{LauWu}
Y.~K. Lau and J.~Wu.
\newblock Sums of some multiplicative functions over a special set of integers.
\newblock {\em Acta Arith.}, 101(4):365--394, 2002.

\bibitem{LubOfAA}
A.~Lubotzky.
\newblock One for almost all: generation of {${\rm SL}(n,p)$} by subsets of
  {${\rm SL}(n,{\bf Z})$}.
\newblock In {\em Algebra, {$K$}-theory, groups, and education ({N}ew {Y}ork,
  1997)}, volume 243 of {\em Contemp. Math.}, pages 125--128. Amer. Math. Soc.,
  Providence, RI, 1999.

\bibitem{Mil}
J.~S. Milne.
\newblock {\em \'{E}tale cohomology}, volume~33 of {\em Princeton Mathematical
  Series}.
\newblock Princeton University Press, Princeton, N.J., 1980.

\bibitem{Ogus}
A.~Ogus.
\newblock Hodge cycles and crystalline cohomology.
\newblock In {\em Hodge cycles, motives, and shimura varieties}, pages
  357--414. Springer, 1981.

\bibitem{Sawin}
W.~Sawin.
\newblock Ordinary primes for abelian surfaces.
\newblock arXiv:1506.04784, June 2015.

\bibitem{Ser81}
J.-P. Serre.
\newblock Quelques applications du th\'eor\`eme de densit\'e de {C}hebotarev.
\newblock {\em Inst. Hautes \'Etudes Sci. Publ. Math.}, (54):323--401, 1981.

\bibitem{Ser12}
J.-P. Serre.
\newblock {\em Lectures on {$N_X (p)$}}, volume~11 of {\em Chapman \& Hall/CRC
  Research Notes in Mathematics}.
\newblock CRC Press, Boca Raton, FL, 2012.

\bibitem{Zywina}
D.~Zywina.
\newblock Hilbert's irreducibility theorem and the larger sieve.
\newblock arXiv:1011.6465, November 2010.

\end{thebibliography}

\end{document}